\renewcommand*\showkeyslabelformat[1]{%
\noexpandarg%
% instead of \textvisiblespace you can also put in ~
% if you want to keep a plain space at space characters
\StrSubstitute{\(\{\)#1\(\}\)}{ }{\textvisiblespace}[\TEMP]%
\parbox[t]{\marginparwidth}{\raggedright\normalfont\small\ttfamily\expandafter\seqsplit\expandafter{\TEMP}}}
\title{Residues of Rankin--Selberg Zeta integrals and the split non-tempered Gan--Gross--Prasad conjectures}
\author{Paul Boisseau}
\begin{document}
\maketitle
\begin{abstract}
    We construct a regularization of the Rankin--Selberg period on general linear groups for non-tempered automorphic representations using residues of Zeta integrals. We prove that it satisfies the global non-tempered Gan--Gross--Prasad conjecture and its Ichino--Ikeda refinement. We also build a local version of our regularization and show that it defines a non-zero invariant linear form on non-tempered representations. Combined with previous works of Chan, Chen and Chen, this settles the conjectures over local fields of characteristic zero.
\end{abstract}

\setcounter{tocdepth}{1}
\tableofcontents

\section{Introduction}
In \cite{GGP}, Gan, Gross and Prasad made a series of conjectures on restrictions problems for classical groups over local fields, and on periods of automorphic forms over global fields. These conjectures were concerned with tempered and generic representations, and have sparked a lot of interest in recent years (see e.g. \cite{BP23} for a recent survey). In \cite{GGP2}, the authors generalized their framework to non-tempered representations. This paper is concerned with the \emph{split} setting for Bessel models, i.e. the situation where the classical groups are general linear groups and the period is the Rankin--Selberg integral. Our results prove the "relevance implies distinction" direction of the non-tempered conjectures, both globally and locally. In what follows, we take $F$ to be a number field with adele ring $\bA$. All algebraic groups will be defined over $F$.

\subsection{The split global non-tempered Gan--Gross--Prasad conjecture}

We first state our global result. We start by recalling the content of the conjecture of \cite{GGP2}.

\subsubsection{Rankin--Selberg periods of Arthur type representations}
\label{subsubsec:RS_for_Arthyr}

Let $n$ be a positive integer. Consider $G=\GL_n \times \GL_{n+1}$ and $H=\GL_n$ embedded in $G$ via the diagonal embedding $h \mapsto \left(h,\begin{pmatrix}
    h & \\
    & 1
\end{pmatrix} \right)$. Write $\cA(G)$ for the space of automorphic forms on $G$, and set $[H]=H(F) \backslash H(\bA)$ which is given a right-invariant measure. We define the \emph{Rankin--Selberg} period to be the (a priori non-convergent) integral 
\begin{equation*}
    \varphi \in \cA(G) \mapsto \cP_H(\varphi):=\int_{[H]} \varphi(h) dh.
\end{equation*}
If $\varphi$ ranges inside a cuspidal representation $\Pi=\Pi_n \boxtimes \Pi_{n+1}$ (so that the integral converges), a celebrated theorem of \cite{JPSS83} states that $\cP_H$ vanishes on $\Pi$ if and only if $L(1/2,\Pi_n \times \Pi_{n+1})= 0$. The conjecture in \cite{GGP2} aims to extend this result to automorphic representations of Arthur type, whose definition we now recall.

For any non-negative integer $d$, let $S_d$ be the unique $d$-dimensional irreducible algebraic representation of $\SL_2(\cc)$. A global Arthur parameter $\psi_n$ for $\GL_n$ is a formal sum
\begin{equation*}
    \psi_n=\sum_{i=1}^m \sigma_i \boxtimes S_{d_i},
\end{equation*}
where $\sigma_1, \hdots , \sigma_m$ are (unitary) cuspidal automorphic representations of some smaller $\GL_{n_1}, \hdots, \GL_{n_m}$ respectively and where $\sum_{i=1}^m n_i d_i=n$. To $\psi$ one can associate a global Arthur-packet $\Pi_{\psi_n}$ which contains a unique automorphic representation $\Pi_n$ of $\GL_n$. It can be written as a parabolic induction
\begin{equation}
        \label{eq:Arthur_decomp}
    \Pi_{n}=\Speh(\sigma_1,d_1) \times \hdots \times \Speh(\sigma_m,d_m), 
\end{equation}
where for any cuspidal automorphic representation $\sigma$ of $\GL_k$ and any $d \geq 1$, $\Speh(\sigma,d)$ is the discrete automorphic representation of $\GL_{dk}$ spanned by residues of Eisenstein series induced from $\sigma \Val{\det}^{(d-1)/2} \times \hdots \times \sigma \Val{\det}^{(1-d)/2}$. If $d=0$, we write $\Speh(\sigma,0)$ for the trivial representation of the trivial group. If $d_1=\hdots=d_m=1$, the parameter is said to be \emph{tempered}, otherwise it is \emph{non-tempered}. Automorphic representations of $\GL_n$ that lie in global Arthur packets are said to be of \emph{Arthur type}. These notions naturally extend to parameters and representations of $G$, and we write Arthur parameters of $G$ as pairs $\psi=(\psi_n,\psi_{n+1})$.

Following \cite{GGP2}, we say that an Arthur parameter $\psi=(\psi_n,\psi_{n+1})$ of $G$ is \emph{relevant} if it decomposes as
\begin{align}
    \psi_n&=\sum_{i=1}^{m_1} \sigma_{1,i} \boxtimes S_{d_{1,i}} + \sum_{j=1}^{m_2} \sigma_{2,j}^\vee \boxtimes S_{d_{2,j}-1}, \label{eq:relevance_defi_1} \\ 
     \psi_{n+1}&=\sum_{i=1}^{m_1} \sigma_{1,i}^\vee \boxtimes S_{d_{1,i}-1} + \sum_{j=1}^{m_2} \sigma_{2,j} \boxtimes S_{d_{2,j}}, \label{eq:relevance_defi_2}
\end{align}
where $\sigma^\vee$ stands for the contragredient of $\sigma$, and we ask that all the $d_{1,i}$'s and $d_{2,j}$'s are positive. The conjecture in \cite{GGP2} can now be stated as follows.

\begin{conj}
    \label{conj:global_GGP_intro}
    Let $\Pi$ be an automorphic representation of Arthur type of $G$ with Arthur parameter $\psi$. There exists an extension $\cP_H^*$ of $\cP_H$ defined on $\Pi$. Moreover, $\cP_H^*$ does not vanish on $\Pi$ if only if the following conditions are satisfied.
    \begin{enumerate}
        \item $\psi$ is relevant.
        \item For all place $v$ of $F$ we have $\Hom_{H(F_v)}(\Pi_v,\cc) \neq \{0\}$.
        \item The quotient of special values of $L$-functions defined in \cite[Conjecture~9.1]{GGP2} is non-zero.
    \end{enumerate} 
\end{conj}

\subsubsection{The tempered case}

If the parameter $\psi$ is tempered, so that is is relevant, then Conjecture~\ref{conj:global_GGP_intro} is known. Indeed, in that case we have $P=M_P N_P$ a standard parabolic subgroup of $G$ and $\sigma$ is a cuspidal representation of $M_P$ such that $\Pi$ is the parabolic induction $I_P^G \sigma$. Decompose it as $I_{P_n}^{\GL_n} \sigma_n \boxtimes I_{P_{n+1}}^{\GL_{n+1}} \sigma_{n+1}$ and consider the quotient of completed Rankin--Selberg $L$-functions
\begin{equation}
    \label{eq:tempered_L}
    \cL(\sigma):=\frac{L(1/2,I_{P_n}^{\GL_n} \sigma_n \times I_{P_{n+1}}^{\GL_{n+1}} \sigma_{n+1})}{L(1,\sigma,\Ad_P)},
\end{equation}
where $\Ad_P$ is the adjoint representation of the Langlands dual group $\widehat{M_P}(\cc)$ on the Lie algebra of $\widehat{N_P}(\cc)$. In \cite{IY}, Ichino and Yamana built a regularization $\cP_H^{\mathrm{IY}}$ defined on a subspace $\cA(G)^{\mathrm{reg}} \subset \cA(G)$. Because $\psi$ is tempered, any Eisenstein series induced from $\Pi$ lies in $\cA(G)^{\mathrm{reg}}$. Moreover, \cite[Theorem~1.1]{IY} shows that $\cP_H^{\mathrm{IY}}$ computes the global Zeta integral $Z$ of \cite{JPSS83}. It follows that if $\varphi=\otimes_v \varphi_v \in \Pi$ we have a finite set $\tS$ of places of $F$ such that 
\begin{equation}
    \label{eq:IY_facto}
    \cP_H^{\mathrm{IY}}(E(\varphi,0))=Z(E(\varphi,0))=\cL(\sigma) \prod_{v \in \tS} Z_{\sigma,v}^\natural(\varphi_v),
\end{equation}
where the $Z_{\sigma,v}^\natural(\varphi_v)$ are local Zeta integrals on $I_P^G \sigma_v$ normalized by the local version of \eqref{eq:tempered_L}. Because the $\sigma_v$'s are generic, \cite{JPSS83} implies that they can be chosen to be non-zero. This proves Conjecture~\ref{conj:global_GGP_intro} for tempered parameters.

\subsubsection{The non-tempered case}
\label{subsubsec:non_tempered}

The first main result of this paper is an extension of \cite{IY} to non-tempered parameters. It turns out that the period $\cP_H^{\mathrm{IY}}$ is not the correct regularization in that case as it automatically vanishes on non-generic (hence in particular non-tempered) automorphic representations. We now explain how to circumvent this issue.

Let $\Pi$ be a automorphic representation of Arthur type of $G$ with relevant Arthur parameter $\psi$. We write $\Pi=I_P^G \pi$, where $\pi$ is the tensor product of Speh representations determined by \eqref{eq:relevance_defi_1} and \eqref{eq:relevance_defi_2}. By \cite{MW89}, there exist $P_\pi$ a standard parabolic subgroup of $G$, $\sigma$ a cuspidal automorphic representation of $M_{P_\pi}$ and $\nu$ an unramified character of $M_{P_\pi}$ such that $\Pi$ is spanned by residues of Eisenstein series induced from $I_{P_\pi}^G \sigma$ at $\nu$. We denote this map by $E^{P,*}$. 

Let $\fa_{P,\cc}^*$ be the vector space of unramified characters of $M_P$. The relevance condition of \eqref{eq:relevance_defi_1} and \eqref{eq:relevance_defi_2} can be twisted by a subspace $\fa_{\pi,\cc}^* \subset \fa_{P,\cc}^*$. We refer to \eqref{eq:a_pi_defi} for an explicit description in coordinates (see also \S\ref{subsec:example} below where we write down this subspace in an example). We can further identify $\fa_{\pi,\cc}^*+\nu$  as a subspace of $\fa_{P_\pi,\cc}^*$. It is contained in an union of singularities of $\lambda \mapsto \cL(\sigma_\lambda)$, where we write $\sigma_\lambda$ for $\sigma \otimes \lambda$. These singularities are all affine hyperplanes coming from the numerator of \eqref{eq:tempered_L}. By multiplying $\cL(\sigma_\lambda)$ by the appropriate product of affine linear forms and restricting, we obtain a meromorphic function on $\fa_{\pi,\cc}^*+\nu$ which we denote by $\cL(\lambda,\pi)$. It may have poles at $\lambda=\nu$, and we further write $\cL^*(\pi)$ for its regularization at this point. We emphasize that, due to the straightforward nature of the poles of Rankin--Selberg $L$-functions, our residues are simply defined by multiplying by products of affine linear forms and evaluating. Moreover, $\cL^*(\pi)$ can be written explicitly in terms of special values of $L$-functions, and in particular 
\begin{equation}
    \label{eq:big_L_factor}
    \cL^*(\pi) \neq 0 \iff \frac{\displaystyle\prod_{i,j} L^*\left(\frac{d_{1,i}-d_{2,j}+1}{2}, \sigma_{1,i} \times \sigma_{2,j} \right)}{\displaystyle \prod_{k=1}^2 \prod_{1 \leq i<j \leq m_k} L\left(\frac{d_{k,i}+d_{k,j}}{2}, \sigma_{k,i} \times \sigma_{k,j}^\vee \right)} \neq 0,
\end{equation}
where $L^*$ stands for the residue of the $L$-function if it is evaluated at a pole. 

For every place $v$ of $F$, using Jacquet integrals the local factor $Z_{\sigma,v}^\natural(\phi_v)$ extends to a meromorphic function $\lambda \in \fa_{P_\pi,\cc}^* \mapsto Z_{\sigma,v}^\natural(\phi_v,\lambda)$ so that $\lambda \mapsto \cP_H^{\mathrm{IY}}(E(\phi,\lambda))$ is meromorphic. We denote by $\mathrm{Res} \; \cP_H^{\mathrm{IY}}(E(\phi,\nu))$ its residue at $\nu$ obtained by multiplying by the same products of affine linear forms as above. Note that it is not obviously well-defined as the local factors $Z_{\sigma,v}^\natural$ may have additional poles at $\nu$. Our main result in the global setting is a refinement of Conjecture~\ref{conj:global_GGP_intro}.

\begin{theorem}
    \label{thm:GGP_global_intro}
    The following assertions hold.
    \begin{itemize}
        \item The residue $\mathrm{Res} \; \cP_H^{\mathrm{IY}}(E(\phi,\nu))$ is well-defined and factors through $I_{P_\pi}^G \sigma_\nu \twoheadrightarrow \Pi$. We denote by $\cP_\pi$ the resulting $H(\bA)$-invariant linear form on $\Pi$.
        \item (Ichino--Ikeda conjecture) For $\varphi=E^{P,*}(\phi,\nu) \in \Pi$ with $\phi=\otimes \phi_v \in I_{P_\pi}^G \sigma$, there exists a finite set of places $\tS$ such that
        \begin{equation*}
            \cP_{\pi}(\varphi)=\cL^*(\pi)  \prod_{v \in \tS} Z_{\sigma,v}^\natural(\phi_v,\nu),
        \end{equation*}
        where all the local factors are regular.
        \item For every place $v$ of $F$ the linear form $Z_{\sigma,v}^\natural(\cdot,\nu)$ is non-zero so that 
        \begin{equation*}
            \cP_\pi \neq 0 \iff \cL^*(\pi) \neq 0.
        \end{equation*}
    \end{itemize}
\end{theorem}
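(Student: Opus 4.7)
The plan is to start from the Ichino--Yamana factorization \eqref{eq:IY_facto} in the tempered locus and extract a residue at $\nu$ by meromorphic continuation. For $\lambda$ in the tempered region of $\fa^*_{P_\pi,\cc}$,
\begin{equation*}
\cP_H^{\mathrm{IY}}(E(\phi,\lambda)) = \cL(\sigma_\lambda) \prod_{v \in \tS} Z^\natural_{\sigma,v}(\phi_v,\lambda),
\end{equation*}
and each factor admits meromorphic continuation to all of $\fa^*_{P_\pi,\cc}$: the Eisenstein period via Ichino--Yamana's regularization applied to the analytically continued Eisenstein series, the global $L$-ratio $\cL(\sigma_\lambda)$ via the theory of Rankin--Selberg $L$-functions, and the local factors via their Jacquet-integral description. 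The identity therefore propagates. Multiplying both sides by the product $Q(\lambda)$ of affine linear forms used to define $\cL(\lambda,\pi)$, restricting to $\fa^*_{\pi,\cc}+\nu$, and passing to $\lambda=\nu$ is how I would define the residue on the left and read off the factorization on the right.

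The crucial technical step is to show that each local factor $Z^\natural_{\sigma,v}(\phi_v,\lambda)$ is holomorphic at $\lambda=\nu$ after restriction to $\fa^*_{\pi,\cc}+\nu$; this is what makes the residue well-defined and is in my view the main obstacle. The $L$-normalization in the definition of $Z^\natural_{\sigma,v}$ is built to cancel the poles of Jacquet integrals in the tempered range, but one must verify that no additional poles appear along the twisted direction $\fa^*_{\pi,\cc}+\nu$ at $\nu$. The relevance condition \eqref{eq:relevance_defi_1}--\eqref{eq:relevance_defi_2} is exactly what aligns the local numerators and denominators so that the cancellation is preserved. I would verify this place-by-place, reducing at most places to a direct analysis of Jacquet integrals on generic parabolic inductions, and invoking the local version of our regularization constructed elsewhere in the paper at the remaining (in particular archimedean) places---this local construction is expected to furnish a holomorphic $H(F_v)$-invariant form on $\Pi_v$ factoring identically through its local Zeta integral.

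Once local regularity is in hand, the factorization of the second bullet is immediate: applying the residue recipe to both sides of the continued identity yields $\mathrm{Res}\;\cP_H^{\mathrm{IY}}(E(\phi,\nu))=\cL^*(\pi)\prod_v Z^\natural_{\sigma,v}(\phi_v,\nu)$, the holomorphic local factors being pulled out of the residue. Factorization through $I_{P_\pi}^G\sigma_\nu \twoheadrightarrow \Pi$ is then automatic, because this quotient is itself realized by $E^{P,*}(\cdot,\nu)$ via the same residue recipe applied to $E(\phi,\lambda)$: if $E^{P,*}(\phi,\nu)=0$, the leading Laurent term of $E(\phi,\lambda)$ at $\nu$ vanishes, and applying the meromorphic linear functional $\cP_H^{\mathrm{IY}}$ to a family vanishing to the appropriate order yields a function vanishing to the same order at $\nu$. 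The $H(\bA)$-invariance of $\cP_\pi$ is inherited from $\cP_H^{\mathrm{IY}}$ by meromorphic continuation. Finally, the non-vanishing of each $Z^\natural_{\sigma,v}(\cdot,\nu)$ is the local ``relevance implies distinction'' statement at $v$, furnished by the local half of this paper; combined with the factorization it gives the equivalence $\cP_\pi \neq 0 \iff \cL^*(\pi) \neq 0$.
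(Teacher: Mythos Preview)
Your factorization argument has a genuine gap. You claim that factorization through $I_{P_\pi}^G\sigma_\nu \twoheadrightarrow \Pi$ is automatic because this quotient is realized by $E^{P,*}(\cdot,\nu)$ ``via the same residue recipe applied to $E(\phi,\lambda)$''. But the two residue recipes are \emph{not} the same. The quotient $E^{P,*}$ is obtained by multiplying by the product $L_{\pi,\mathrm{res}}(\lambda)=\prod_{\alpha\in\Delta_{P_\pi}^P}(\langle\lambda,\alpha^\vee\rangle-1)$ of root-type linear forms and evaluating, whereas $\mathrm{Res}\;\cP_H^{\mathrm{IY}}(E(\phi,\lambda))$ is obtained by multiplying by the Rankin--Selberg linear forms in $L_+\cup L_-$ (which mix the $\GL_n$ and $\GL_{n+1}$ coordinates) and restricting. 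The singular hyperplanes of $\cP_H^{\mathrm{IY}}(E(\phi,\lambda))$ are simply not those of $E(\phi,\lambda)$: along each polar hyperplane $\cH$ of the Eisenstein series one has $\Res_{\cH}\,\cP_H^{\mathrm{IY}}(E(\phi,\lambda))=\cP_H^{\mathrm{IY}}(\Res_{\cH}E(\phi,\lambda))=Z(\Res_{\cH}E(\phi,\lambda))=0$, since residual Eisenstein series are not generic. So knowing that $E^{P,*}(\phi,\nu)=0$ tells you nothing about the leading Laurent coefficient of $\cP_H^{\mathrm{IY}}(E(\phi,\lambda))$ along the $L_\pm$-hyperplanes, and your ``automatic'' step collapses.

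The paper's route to factorization is substantially harder and occupies most of \S\S4--5. One realizes $\cP_H^{\mathrm{IY}}$ via Zydor's truncation, writes a reversed Maa\ss--Selberg relation (Proposition~\ref{prop:unfold_periods}) expressing $\cP(E(\phi,\lambda))$ as a sum of truncated relative periods over Rankin--Selberg parabolics, and computes the $L_+\cup L_-$ residues term by term (Proposition~\ref{prop:easy_residue}, Lemmas~\ref{lem:second_residue_n+1} and~\ref{lem:second_residue_n}). The outcome is that the residue equals $\cP^{P_{\res}}$ applied to $M^*(w_\pi^*,\lambda)\phi$, and since the regularized intertwining operator $M^*(w_\pi^*,\lambda)$ realizes the quotient (Corollary~\ref{cor:same_quotient}) one gets factorization for $\lambda\in\fa_{\pi,\cc}^*$ in general position. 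Only then does the local input (Theorem~\ref{thm:local_GGP_explicit})---whose proof in fact \emph{uses} this generic global factorization via globalization---allow one to pass from general position to the specific point $\nu$. Your identification of local regularity and non-vanishing as key ingredients is correct, but this circular structure (global generic factorization $\Rightarrow$ local theorem $\Rightarrow$ global theorem at $\nu$) is essential and absent from your sketch.
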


As indicated by the notation, $\cP_\pi$ depends on the choices of the parabolic $P$ and representation $\pi$. However, choosing different inducing data $P'$ and $\pi'$ will result in a functional equation between $\cP_\pi$ and $\cP_{\pi'}$ (see Proposition~\ref{prop:independence_choice_couple}). Moreover, the vanishing of $\cL^*(\pi)$ only depends on $\Pi$. The second point can be seen as a split avatar of \cite[Conjecture~3.2]{II}. We also note that our quotient $\cL^*(\pi)$ is not exactly the one predicted by \cite[Conjecture~9.1]{GGP2}, which is in fact often zero due to the presence of poles in the denominator, and therefore would need to be regularized.

\subsection{The split local non-tempered Gan--Gross--Prasad conjecture}

We now pick a place $v$ of $F$ and address the issue raised by the second point of Conjecture~\ref{conj:global_GGP_intro}, namely the vanishing of $\Hom_{H(F_v)}(\Pi_v,\cc)$. This space is known to be of dimension at most $1$ by \cite{AGRS}, and if it is non-zero $\Pi_v$ is said to be \emph{distinguished}. In \cite{GGP2}, Gan, Gross and Prasad made a conjecture on distinction for smooth irreducible representation of Arthur type. We extend it to a more general version for representations of \emph{weak} Arthur type, and then state our main local result which is one direction of this conjecture.

\subsubsection{An enhanced local conjecture for representations of weak Arthur type}
\label{subsubsec:local_non_tempered_GGP}

Let $W_v$ be the Weil group of $F_v$, and let $WD_{v}$ be the Weil-Deligne group of $F_v$ which is $W_v$ is $v$ is Archimedean, and $W_v \times \SL_2(\cc)$ is $v$ is non-Archimedean. By local class field theory, the abelianization of $W_v$ is isomorphic to $F_v^\times$ and we let $\Val{\cdot}$ be the normalized absolute value of $F_v^\times$, which we identify with a morphism $WD_v \to \cc^\times$ trivial on the second factor. We define a \emph{weak} Arthur parameter $\psi_{v,n}$ to be a morphism $WD_v \times \SL_2(\cc) \to \GL_n(\cc)$ of the form
\begin{equation*}
    \psi_{v,n}=\bigoplus_{i=1}^m (M_i \otimes \Val{\cdot}^{\nu_i} )\boxtimes S_{d_i},
\end{equation*}
where each $M_i$ is a finite dimensional irreducible representation of $WD_v$ whose restrictions to $W_v$ and $\SL_2(\cc)$ are bounded and algebraic respectively, and where $\nu_1, \hdots ,\nu_m$ are real numbers with $\Val{\nu_1}, \hdots, \Val{\nu_m}<1/2$. 

One can associate to the weak Arthur parameter $\psi_{v,n}$ a Langlands parameter, and therefore a smooth irreducible representation $\Pi_{v,n}$ of $\GL_n(F_v)$ by the local Langlands correspondence (\cite{Langlands89}, \cite{Henniart}, \cite{HT} and \cite{Scholze}). More precisely, to each $M_i$ the correspondence associates an irreducible square integrable (unitary) representation $\delta_{v,i}$ of some $\GL_{n_i}(F_v)$ and we have 
\begin{equation}
    \label{eq:pi_v_n_defi}
    \Pi_{v,n}=\Speh(\delta_{v,1},d_1) \Val{\det}^{\nu_1} \times \hdots \times \Speh(\delta_{v,m},d_m) \Val{\det}^{\nu_m}.
\end{equation}
where if $\delta_v$ is an irreducible square integrable representation of some $\GL_k(F_v)$, then $\Speh(\delta_v,d)$ is the unique irreducible quotient of $ \delta_v \Val{\det}{\frac{d-1}{2}} \times \hdots \times \delta_v \Val{\det}^{\frac{-(d-1)}{2}}$. By \cite[Proposition~I.9]{MW89}, such $\Pi_{v,n}$ is indeed irreducible. This defines a class of representations of \emph{weak} Arthur type of $G(F_v)$. By \cite{Vo} and \cite{Ta}, it contains the unitary dual of $G(F_v)$.

By setting $(M_i \otimes \Val{\cdot}^{\nu_i} )^\vee=(M_i^\vee \otimes \Val{\cdot}^{-\nu_i} )$ and copying \eqref{eq:relevance_defi_1} and \eqref{eq:relevance_defi_2}, we get a notion of relevance for weak Arthur parameters $\psi_v$ of $G(F_v)$. Our enhanced version of the conjecture is the following.

\begin{conj}
    \label{conj:local_GGP_intro}
    Let $\Pi_v$ be a representation of weak Arthur type of $G(F_v)$ with parameter $\psi_v$ Then $\Hom_{H(F_v)}(\Pi_v,\cc) \neq \{0\}$ if and only if $\psi_v$ is relevant.
\end{conj}

If $v$ is non-Archimedean, the direct implication for unitarizable representations was shown in \cite{Gu}, and the full conjecture for Arthur type representations was proved in \cite{Chan}. However, this last proof is not constructive and does not give an explicit element in $\Hom_{H(F_v)}(\Pi_v,\cc)$. For $v$ Archimedean, the direct implication is obtained in \cite{CC} for the weak Arthur variant.

Note that the local component $\Pi_v$ of an Arthur type automorphic representation $\Pi$ of $G$ is of weak Arthur type, unconditionally on the Ramanujan conjecture. It is straightforward that if the global Arthur parameter of $\Pi$ is relevant, so will that of $\Pi_v$. However, it is not obvious that the converse holds as illustrated in \cite[Remark~9.2]{GGP2}.

\subsubsection{Residues of local Zeta functionals}
\label{subsubsec:residues_zeta_intro}
 We now state the main local result of this paper, which logically comes before Theorem~\ref{thm:GGP_global_intro}. 
 
 We take $v$ a place of $F$ and $\Pi_v$ a weak Arthur type representation of $G(F_v)$ with parameter $\psi_v$. As in the global setting, we may write $\Pi_v$ as a parabolic induction $I_P^G \pi_v$, where $\pi_v$ is a product of essentially Speh representations, and further realize it as a quotient of an induction $I_{P_\pi}^G \delta_{v,\nu}$, where $\delta_v$ is a unitary square integrable representation of $M_{P_\pi}(F_v)$ and $\nu \in \fa_{P_\pi}^*$. This induction is chosen relatively to the decompositions \eqref{eq:relevance_defi_1} and \eqref{eq:relevance_defi_2} so that $I_{P_\pi}^G \delta_{v,\nu}$ may not be a standard module. For $\lambda \in \fa_{P_\pi,\cc}^*$ in general position, we have the Zeta integral $Z_{\delta_v}(\phi_v,\lambda)$ on $I_{P_\pi}^G \delta_{v,\lambda}$ built using a Jacquet integral. It is meromorphic in $\lambda$, and we consider the normalized version $Z^\natural_{\delta_v}(\phi_v,\lambda)$ obtained by taking the quotient by the local version $\cL(\delta_{v,\lambda})$ of \eqref{eq:tempered_L}. 

 If we now assume that $\psi_v$ is relevant, we can define an affine subspace $\fa_{\pi,\cc}^*+\nu \subset \fa_{P_\pi,\cc}^*$ as in \S\ref{subsubsec:non_tempered}. It is not contained in any of the singularities of $Z^\natural_{\delta_v}(\phi_v,\lambda)$. Therefore, the restriction $R Z^\natural_{\delta_v}$ of $Z^\natural_{\delta_v}$ is a well-defined meromorphic functional on $\fa_{\pi,\cc}^*+\nu$, but it may a priori have poles at $\nu$ coming from the denominator of \eqref{eq:tempered_L}. $RZ^\natural_{\delta_v}$ should be interpreted as a residue of a local Zeta integral. Our second result is an Ichino--Ikeda refinement of the non-tempered Gan--Gross--Prasad conjecture which is the split avatar of \cite[Conjecture~3.1]{II}.

 \begin{theorem}
    \label{thm:local_GGP_intro}
    Let $\Pi_v$ be an irreducible representation of $G(F_v)$ of weak Arthur type with relevant parameter. Then $RZ^\natural_{\delta_v}(\cdot,\lambda)$ is regular at $\lambda=\nu$. It factors through the quotient $I_{P_\pi}^G \delta_{v,\nu} \twoheadrightarrow \Pi_v$ and defines a non-zero element in $\Hom_{H(F_v)}(\Pi_v,\cc)$. 
 \end{theorem}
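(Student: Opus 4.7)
The plan is to establish the three assertions in order, with the non-vanishing being the principal obstacle.

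For the regularity of $RZ^\natural_{\delta_v}(\cdot,\lambda)$ at $\lambda=\nu$, I would trace the singularities of the unnormalized Jacquet integral $Z_{\delta_v}(\phi_v,\lambda)$ under analytic continuation from its domain of absolute convergence on $\fa_{P_\pi,\cc}^*$. Since $\delta_v$ is square-integrable, these singularities lie on explicit affine hyperplanes coming from standard intertwining operators and Rankin--Selberg $L$-factors. The normalized family $Z^\natural_{\delta_v}=Z_{\delta_v}/\cL(\delta_{v,\lambda})$ inherits from $\cL$ a precise set of polar hyperplanes which, upon restriction to $\fa_{\pi,\cc}^*+\nu$, pass through $\nu$ with multiplicities controlled by the relevance decomposition \eqref{eq:relevance_defi_1}--\eqref{eq:relevance_defi_2}. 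The product of linear forms used to form $RZ^\natural$ is designed precisely to cancel these, so establishing regularity reduces to a bookkeeping match between the hyperplane configurations arising from $\cL$ and those appearing in $Z^\natural$ on $\fa_{\pi,\cc}^*+\nu$.

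For the factorization through the quotient $I_{P_\pi}^G\delta_{v,\nu}\twoheadrightarrow\Pi_v$, my approach is by continuity in $\lambda$. On a dense open subset of $\fa_{\pi,\cc}^*+\nu$, the induction $I_{P_\pi}^G\delta_{v,\lambda}$ is irreducible and $Z^\natural(\cdot,\lambda)$ is trivially a functional on that irreducible quotient. The kernel of the specialization at $\nu$ is generated by images of Langlands-type intertwining operators $M(\lambda)$ that become non-invertible at $\lambda=\nu$; the functional equations relating the Jacquet Zeta integral to its composition with such $M(\lambda)$ (together with the fact that both sides specialize compatibly to $\nu$) force $RZ^\natural(\cdot,\nu)$ to annihilate $\mathrm{Im}\,M(\nu)$ and hence descend to $\Pi_v$. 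The $H(F_v)$-invariance propagates from the $H$-invariance of the Jacquet integral at each generic $\lambda$.

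The non-vanishing is the crux. I see two complementary approaches. The \emph{direct} one is to realize $RZ^\natural_{\delta_v}(\cdot,\nu)$ as the leading Laurent coefficient of $Z^\natural$ along $\fa_{\pi,\cc}^*+\nu$ and exhibit an explicit test section $\phi_v$ on which this coefficient is non-zero: in the non-Archimedean case, one can try sections supported on appropriate Bruhat cells whose Zeta integral unfolds into a sum producing the expected pole order, with a computable leading term built from the Whittaker functional of $I_{P_\pi}^G\delta_v$; in the Archimedean case one is forced to work with Schwartz sections and control asymptotic expansions. The \emph{indirect} approach is to combine the existence results of \cite{Chan} (non-Archimedean) and \cite{CC} (Archimedean, weak Arthur type) with the one-dimensionality \cite{AGRS} of $\Hom_{H(F_v)}(\Pi_v,\cc)$: since the space is one-dimensional and known non-zero, it suffices to prove that $RZ^\natural_{\delta_v}(\cdot,\nu)$ is not identically zero as a functional on $\Pi_v$, a weaker statement that may be accessible via a generic-position argument showing the leading term of the meromorphic family $Z^\natural(\cdot,\lambda)$ at $\nu$ cannot vanish identically. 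The hardest point in either route is the analytic control of residues of Jacquet integrals on non-standard modules; I would expect an induction on the depth of the parameter (the $d_{k,i}$'s) reducing to the tempered case of \cite{IY}, where non-vanishing of the normalized Jacquet integral is classical, to be essential.
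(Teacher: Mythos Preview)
Your proposal contains several genuine gaps and a misunderstanding of the object $RZ^\natural_{\delta_v}$.

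First, $RZ^\natural_{\delta_v}$ is simply the \emph{restriction} of the normalized functional $Z^\natural_{\delta_v}$ to the affine subspace $\fa_{\pi,\cc}^*+\nu$; no product of linear forms is used in its formation. The potential singularities of $RZ^\natural_{\delta_v}$ at $\nu$ come from the factor $b(\lambda,\delta_v)$ (the adjoint $L$-factor in the denominator of $\cL(\delta_{v,\lambda})$), and since $I_{P_\pi}^G\delta_{v,\nu}$ is in general \emph{not} a standard module, $\nu$ need not lie in the region $\Re(\lambda)\in\overline{\fa_{P_\pi}^{*,+}}$ where $b$ is automatically regular. So regularity is not a bookkeeping match.

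Second, your factorization argument is based on a false premise: on the subspace $\fa_{\pi,\cc}^*+\nu$ the induction $I_{P_\pi}^G\delta_{v,\lambda}$ is \emph{never} irreducible (unless all $d_{k,i}=1$). Indeed this affine subspace is precisely where the quotient $I_{P_\pi}^G\delta_{v,\lambda}\twoheadrightarrow I_P^G\pi_{v,\lambda+\nu_\pi}$ onto a product of Speh representations exists; that is the whole content of the factorization statement. So there is no ``trivially irreducible'' generic locus to work from.

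Third, the indirect route to non-vanishing is circular at Archimedean places: \cite{CC} only proves the implication ``distinguished $\Rightarrow$ relevant'', so you cannot invoke it to know $\Hom_{H(F_v)}(\Pi_v,\cc)\neq 0$ in advance. And even at non-Archimedean places, multiplicity one plus $\Hom\neq 0$ does not by itself imply that the \emph{specific} element $RZ^\natural_{\delta_v}(\cdot,\nu)$ is non-zero; a holomorphic family of non-zero functionals can specialize to zero.

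The paper's strategy inverts your logical order. The key reduction (Proposition~\ref{prop:facto_is_enough}) shows that \emph{factorization for $\lambda$ in general position on $\fa_{\pi,\cc}^*+\nu$} implies all three assertions simultaneously. The mechanism is the functional equation $Z^\natural_{\delta_v}(\phi,\lambda)=Z^\natural_{w\delta_v}(N_{\delta_v}(w,\lambda)\phi,w\lambda)$: choosing $w$ so that $w\nu$ lies in the closed positive chamber, one transports the problem to a standard module $I_{Q_\pi}^G w\delta_{v,w\nu}$, where regularity and non-vanishing are classical (\cite{JPSS83}, \cite{Jac09}); the generic factorization allows one to build a holomorphic section through the quotient map and descend. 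The hard input is therefore the generic factorization itself, which the paper proves in two independent ways: a global argument (globalize $\delta_v$ to a cuspidal $\sigma$ via \cite{Clo}, use the global factorization of $\cP_\pi$ from Theorem~\ref{thm:non_tempered_periods} and the Euler product to isolate the local factor), and for non-Archimedean $v$ a purely local argument via an asymptotic expansion of the Jacquet functional on $A_Q[\leq\varepsilon]$ (Proposition~\ref{prop:Whittaker_explicit_discrete}), which yields an explicit formula for $Z^\natural_{\delta_v}$ visibly factoring through $N_{\delta_v}(w^*_m,\lambda)$. Neither route resembles the continuity/specialization argument you sketch.
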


 As explained in \S\ref{subsubsec:local_non_tempered_GGP}, the Archimedean case of Theorem~\ref{thm:local_GGP_intro} was the final missing piece to complete the proof of the local non-tempered conjecture \ref{conj:local_GGP_intro} in the Arthur type setting. However, we emphasize that our result is also new in the non-Archimedean case, as we explicitly build a non-zero element in $\Hom_{H(F_v)}(\Pi_v,\cc)$.
 
 We provide two proofs of Theorem~\ref{thm:local_GGP_intro}. The first is uniform in $v$ and relies on a globalization argument using a weak version of Theorem~\ref{thm:GGP_global_intro}. We use it to complete the proof of our global result. The second method is independent and local, but only works if $v$ is non-Archimedean. 

\subsection{Remarks on Theorems~\ref{thm:GGP_global_intro} and \ref{thm:local_GGP_intro}}

\subsubsection{Relation with the BZSV formalism}

We explain the connection of our global result Theorem~\ref{thm:GGP_global_intro} with \cite[Conjecture~14.3.5]{BZSV} which relates automorphic periods and special values of $L$-functions (a priori in the function field case). The formalism is roughly the following. Let $\LAG$ be a reductive group over $F$, and let $M$ be an Hamiltonian $\LAG$-variety so that one can define a (possibly regularized) period $\cP_M$. Assume that $M$ has a dual Hamiltonian variety $\check{M}$ with action of the Langlands dual group ${}^L \LAG$. If $\Pi$ is an automorphic representation of $\LAG$ with Arthur parameter $\psi$ (possibly non-tempered), we obtain an action of the hypothetical Langlands group $\cL_F$ of $F$ on $\check{M}$ by composition. One can then define a subvariety $\check{M}_{\mathrm{slice}}$ of $\check{M}$ stable under this action. If $\check{M}$ is polarized (which is the case here), \cite[Conjecture~14.3.5]{BZSV} says that
\begin{equation}
    \label{eq:BZSV_equality}
    \cP_M(\varphi) \sim \sum_{i=1}^m L(0,T_{x_i} \check{M}_{\mathrm{slice}}^\shear), \quad \varphi \in \Pi.
\end{equation}
where $\{x_1, \hdots, x_m\}$ is the finite set of $\cL_F$-fixed points on $\check{M}_{\mathrm{slice}}$, $L(0,T_{x_i} \check{M}_{\mathrm{slice}}^\shear)$ is some special value of $L$-function coming from the action of $\cL_F$ on the tangent space $T_{x_i} \check{M}_{\mathrm{slice}}$, and $\sim$ means that \eqref{eq:BZSV_equality} holds up to local factors.

In our case, we have $M=T^* (H \backslash G)$ and $\check{M}=T^* \mathrm{std}_{{}^L G}$ the cotangent bundle of the standard representation of ${}^L G$. One can check that fixed points of $\psi$ correspond to the \emph{correlators} of \cite[Section~4]{GGP2}. In particular, as asserted by \cite[Lemma~4.3]{GGP2}, $\psi$ is relevant if and only if it has a fixed point on $\check{M}$. However, we claim that Theorem~\ref{thm:GGP_global_intro} lies slightly outside of the framework of \eqref{eq:BZSV_equality} as both sides of this equation are undefined in our case. 

On the LHS, the regularized period $\cP_M$ is $\cP_H^{\mathrm{IY}}$. But as soon as the Arthur parameter of $\Pi$ is non-tempered the Eisenstein series induced from $\Pi$ don't lie in $\cA(G)^\mathrm{reg}$. This is reflected by the fact that we have to take residues to define $\cP_\pi$. In particular, $\cP_\pi$ should not be interpreted as an extension of $\cP_H$ to the whole space of automorphic forms of Arthur type, but rather as a regularized integral along a degeneration of $[H]$. More precisely, we explain in \S\ref{subsubsec:residue_free} how to associate to $\Pi$ two standard parabolic subgroups $P_{+,\pi}$ and $P_+$ as well as a Weyl element $w_+$. Set $P_{+,H}=P_+ \cap H$ and write the standard Levi decomposition $P_{+,H}=M_{+,H}N_{+,H}$. Using the construction of \cite{Zydor}, we consider a linear form $\cP^{P_+}$ which regularizes the integral along $Z^{\infty}_+ M_{+,H}(F) N_{+,H}(\bA) \backslash H(\bA)$, where $Z^\infty_+$ is a certain subgroup of the center of $M_{+,H}$. Then we show in Proposition~\ref{prop:alternative_construction} that 
\begin{equation}
    \label{eq:period_alternative}
    \cP_\pi(\varphi)=\cP^{P_+}\left(E^{P_+}\left(M(w_+) \varphi_{P_{+,\pi}}\right)\right), \quad \varphi \in I_P^G \pi,
\end{equation}
where $\varphi_{P_{+,\pi}}$ is the constant term of $\varphi$ along $P_{+,\pi}$, $M(w_+)$ is an intertwining operator, and $E^{P_+}$ is a generalized Eisenstein series. The relevance condition on $\psi$ implies that this series transforms appropriately under the action of $Z_+^\infty$. 

On the RHS of \eqref{eq:BZSV_equality}, the corresponding failure is that the set of fixed points is infinite as soon as $\psi$ is not tempered. Because $\cP^{P_+}$ can be interpreted as a product of Petersson inner-products and of a smaller Zeta integral on the Levi $M_+$ (see Remark~\ref{rem:diagonal_Arthur} and Proposition~\ref{prop:Zydor_RS}), one can easily cook up a new ${}^L M_+$-Hamiltonian variety which will yield the correct $L$-factor from \eqref{eq:big_L_factor}. However, it is at the moment not clear to us how to define it for general $\check{M}$ and $\psi$, but we believe that the regularization procedure of $\cP_H^{\mathrm{IY}}$ carried out by Theorem~\ref{thm:GGP_global_intro} should have a spectral analogue on $\check{M}$. We plan to investigate this question in a future work.

 \subsubsection{Relation to non-tempered Gan--Gross--Prasad conjectures on unitary groups}

 We expect our results to have applications to the other conjectures of \cite{GGP2} for unitary groups. In the global case, the tempered conjecture from \cite{GGP} was proved in \cite{BPCZ} using a comparison of relative trace formulae pioneered by \cite{JR}. On the general linear groups' side, the spectral expansion of the trace formulae involves Rankin--Selberg and Flicker--Rallis periods of automorphic forms (see \cite{Fli} for the latter). To prove the conjectures of \cite{GGP2} using this approach, it is necessary to derive the non-tempered contributions to this trace formula. This was done for the Flicker--Rallis period in \cite{Ch}, and we will compute in a subsequent work \cite{BoiRS} the fine expansion of its Rankin--Selberg counterpart. It involves the linear forms $\cP_\pi$ from Theorem~\ref{thm:GGP_global_intro}. Therefore, it is crucial to understand their connections with $L$-functions. 

\subsubsection{Rankin--Selberg integrals for non-tempered representations}

If $F_v$ is non-Archimedean, Rankin--Selberg integrals for Speh representations have been studied in the equal rank case in \cite{LM20}, and in the almost equal rank case in \cite{AKS}. These integrals are respectively split Fourier--Jacobi and Bessel functionals and are built using degnerate Whittaker models. Our approach is different as we consider Rankin--Selberg integrals on standard modules and show that they factor through the Langlands quotient. Our proof only requires the knowledge of the derivatives of the standard module and not of the non-tempered representations themselves, which enables us to obtain our non-vanishing result. Nevertheless, it would be interesting to know if $RZ_{\delta_v}^\sharp$ can be written as an integral of a degenerate Whittaker function.
 
 \subsection{About the proofs: computation of residues of Zeta integrals}

 \subsubsection{About Theorem~\ref{thm:GGP_global_intro}}

To prove Theorem~\ref{thm:GGP_global_intro}, we compute the residue of the meromorphic functional $\lambda \mapsto \cP_{H}^{\mathrm{IY}}(E(\cdot,\lambda))$, defined on the induction $I_{P_\pi}^G \sigma_{\lambda}$, in two ways. The first is to start from the relation $\cP_{H}^{\mathrm{IY}}(E(\phi,\lambda))=Z(E(\phi,\lambda))$ given by \cite[Theorem~1.1]{IY}. Because the Zeta integral admits the Euler product expansion \eqref{eq:IY_facto}, this readily yields a similar expansion for its residue $\mathrm{Res} \;\cP_{H}^{\mathrm{IY}}(E(\phi,\lambda))$ for $\lambda$ in general position, and thus gives the second point in Theorem~\ref{thm:GGP_global_intro}. However, it is not clear from this expression that this linear form factors through $I_{P_\pi}^G \sigma_{\lambda} \twoheadrightarrow I_P^G \pi_\lambda$. To prove this fact, we realize $\cP_H^{IY}$ using the truncation techniques of \cite{Zydor}, which we show agrees with the ones of \cite{IY}. This allows us to express $\cP_{H}^{\mathrm{IY}}(E(\phi,\lambda))$ as a sum of truncated periods of Eisenstein series, which can be thought of as a \emph{reversed Maa\ss--Selberg relation}. By carefully analyzing the constant term of Eisenstein series series and their decomposition according to cuspidal exponents, we show that $\mathrm{Res} \;\cP_{H}^{\mathrm{IY}}(E(\cdot,\lambda))$ factors through a certain regularized intertwining operator $M^*(w_\pi^*,\lambda)$. Because the latter realizes the map $I_{P_\pi}^G \sigma_{\lambda} \twoheadrightarrow I_P^G \pi_\lambda$, we obtain the desired factorization property. To go beyond $\lambda$ in general position, we have to deal with local factors and use Theorem~\ref{thm:local_GGP_intro}.
 
 We emphasize that the singular affine hyperplanes of $\cP_{H}^{\mathrm{IY}}(E(\phi,\lambda))$ are not those of the Eisenstein series $E(\phi,\lambda)$. Indeed, if $\cH$ is one such affine hyperplane, we can show that 
 \begin{equation*}
    \underset{\cH}{\Res} \left(\cP_{H}^{\mathrm{IY}}(E(\phi,\lambda))\right)=\cP_{H}^{\mathrm{IY}} \left( \underset{\cH}{\Res} \; E(\phi,\lambda) \right)=Z\left( \underset{\cH}{\Res} \; E(\phi,\lambda) \right),
 \end{equation*}
 where the last point is \cite[Theorem~1.1]{IY}. But because residual Eisenstein series are not generic, this is zero. Therefore, the factorization property of $\mathrm{Res} \;\cP_{H}^{\mathrm{IY}}(E(\cdot,\lambda))$ is less straightforward. 

 \subsubsection{About Theorem~\ref{thm:local_GGP_intro}}

 In the local setting, the key point is to prove that $RZ^\natural_{\delta_v}(\cdot,\lambda)$ factors through $I_{P_\pi}^G \delta_{v,\lambda} \twoheadrightarrow I_P^G \pi_{v,\lambda}$ for $\lambda \in \fa_{\pi,\cc}^*+\nu$ in general position. Indeed, from there it is enough to prove the regularity and non-vanishing of $RZ^\natural_{\delta_v}(\cdot,\nu)$ if $I_{P_\pi}^G \delta_{v,\nu}$ is a standard module, which then follows from \cite{JPSS83}. Because we know that this factorization property holds globally (at this point only in general position), it also does locally using a globalization argument.  This yields the first proof of Theorem~\ref{thm:local_GGP_intro}.
 
 Our second proof parallels the global method. We realize the map $I_{P_\pi}^G \delta_{v,\lambda} \twoheadrightarrow I_P^G \pi_{v,\lambda}$ as a local intertwining operator $N_v(w_\pi^*,\lambda)$ and write a local analogue of the reversed Maa\ss--Selberg equation. To do this, we first prove in Proposition~\ref{prop:Whittaker_explicit_discrete} an asymptotic formula for Whittaker functionals which mirrors that of matrix coefficients from \cite{Cas}. In fact, this formula is alluded to in \cite{CasLetter}. We then obtain an expression for the local Zeta integral $Z_{\delta_v}(\phi_v,\lambda)$ which lets us compute $Z^\natural_{\delta_v}(\phi_v,\lambda)$ and prove the factorization property.

\subsection{A simple example}
\label{subsec:example}

To help the reader get a better understanding of Theorems~\ref{thm:GGP_global_intro} and~\ref{thm:local_GGP_intro}, we explain their proofs on a simple example. We take $G=\GL_1 \times \GL_2$ and $H=\GL_1$. We choose $\Pi$ to be the trivial character of $G$ (either locally or globally). Then $P_\pi=P_0$ is the Borel subgroup of upper-triangular matrices, and $\sigma$ (or $\delta_v$) is $1$ the trivial character of the torus. The space $\fa_{P_\pi,\cc}^*$ is $3$-dimensional, and we write elements $\lambda \in \fa_{P_\pi,\cc}^*$ as $\lambda=(a,(b,c))$. The space $\fa_{P,\cc}^*$ is $2$-dimensional, and $\fa_\pi^*$ is the anti-diagonal subspace $(z,-z)$. Finally, $\nu=(0,(1/2,-1/2)) \in \fa_{P_\pi}^*$. 

\subsubsection{The global factorization}

We first study the global case. Let $\zeta$ be the completed Zeta function of $F$. By \eqref{eq:IY_facto}, we have $\tS$ a finite set of places such that for $\lambda \in \fa_{P_\pi,\cc}^*$ in general position and $\phi=\otimes_v \phi_v \in I_{P_\pi}^G 1$
\begin{equation}
    \label{eq:Euler_product}
    \cP(E(\phi,\lambda))=\frac{\zeta(a+b+1/2)\zeta(a+c+1/2)}{\zeta(b-c+1)} \prod_{v \in \tS} Z^\natural_{1}(\phi_v,\lambda).
\end{equation}
There are two singular hyperplanes passing through $\nu$ which are directed by the affine linear forms $\Lambda(\lambda)=-(a+c+1/2)$ and $\Lambda'(\lambda)=a+b-1/2$. Note that the singular affine hyperplane $b-c=1$ of $E(\phi,\lambda)$ is not a pole of $\cP(E(\phi,\lambda))$.

Let $w$ be the non-trivial element in the Weyl group of $\GL_2$. By the reversed Maa\ss--Selberg relation, for a sufficiently positive truncation parameter $T \in \rr$ we have
\begin{align*}
     \cP(E(\phi,\lambda))=&\cP^T(E(\phi,\lambda))-\frac{e^{(a+b+1/2)T}}{a+b+1/2}\cP^{P_0}(\phi,\lambda)-\frac{e^{(a+c+1/2)T}}{a+c+1/2}\cP^{P_0}(M(w,\lambda)\phi,w\lambda) \\
     &+\frac{e^{(a+c-1/2)T}}{a+c-1/2}\cP^{\overline{P_0}}(\phi,\lambda)+\frac{e^{(a+b-1/2)T}}{a+b-1/2}\cP^{\overline{P_0}}(M(w,\lambda)\phi,w\lambda),
\end{align*}
where $\cP^T$ is a truncated period and $\cP^{P_0}$ and $\cP^{\overline{P_0}}$ are relative periods. Let $K_H$ be the maximal compact subgroup of $H$. Then we have the simple expressions
\begin{equation*}
    \cP^{P_0}(\phi,\lambda)=\int_{K_H} \phi(k) dk, \quad \cP^{\overline{P_0}}(\phi,\lambda)=\int_{K_H} \phi(wk) dk.
\end{equation*}
We write $\underset{\Lambda}{\Res} \; \cP(\phi,\lambda)$ for the evaluation of $\Lambda(\lambda)\cP(\phi,\lambda)$ along the zero locus of $\Lambda$. Because we know that $\cP^T(E(\phi,\lambda))$ inherits the analytic behaviour of $E(\phi,\lambda)$, we conclude that 
\begin{equation*}
     \underset{\Lambda}{\Res} \; \cP(\phi,\lambda)=\cP^{P_0}(M(w,\lambda)\phi,w \lambda).
\end{equation*}
Note that along the hyperplane $\Lambda(\lambda)=0$, we have $\Lambda'(\lambda)=b-c-1$. Moreover, $\Lambda^{-1}(\{0\}) \cap \Lambda^{',-1}(\{0\})=\fa_{\pi,\cc}^* +\nu$. Therefore, for $\lambda$ in this affine subspace we obtain 
\begin{equation*}
     \underset{\Lambda'}{\Res} \; \underset{\Lambda}{\Res} \; \cP(\phi,\lambda)=\cP^{P_0}(M^*(w,\lambda)\phi,w \lambda).
\end{equation*}
This is the desired factorization property.

\subsubsection{The local factorization}

We now go to the local setting and take $v$ a place of $F$. Let $ W_{P_0,1}^{\psi}(\cdot,\lambda)$ be the Jacquet functional defined on the induction $I_{P_0}^G \lambda$ with respect to some non-trivial character $\psi$ of $F_v$. In what follows, the formulae hold for suitable choices of measures and $\psi$, but we suppress here this dependence.

Let $\phi_v \in I_{P_0}^G 1$. Write $\zeta_v$ for the local Zeta function of $F_v$. Our formula for the Jacquet functional from Proposition~\ref{prop:Whittaker_explicit_discrete} says that there exists $\varepsilon>0$ such that for $t \in F_v^\times$ with $\Val{t}<\varepsilon$ we have
\begin{equation*}
    W_{P_0,1}^{\psi}\left(\left(t,\begin{pmatrix}
        t & \\
        & 1
    \end{pmatrix}\right),\phi_v,\lambda \right)=\frac{\zeta_v(c-b)}{\zeta_v(b-c+1)}\Val{t}^{a+b+1/2}\phi_v(1)+\frac{\zeta_v(b-c)}{\zeta_v(b-c+1)}\Val{t}^{a+c+1/2} N_v(w,\lambda)\phi_v(1),
\end{equation*}
where $N_v(w,\lambda)$ is Shahidi's normalized intertwining operator $I_{P_0}^G \lambda \to I_{P_0}^G w \lambda$. Here $Z_{1}(\phi_v,\lambda)$ is the integral of $W_{P_0,1}^{\psi}(\phi_v,\lambda)$ along $F_v^\times$. Using the conditions on the supports of Whittaker functions from \cite[Proposition~6.1]{CS}, we conclude that that there exist $t_1, \hdots, t_{m+1} \in F_v^\times$ such that 
\begin{align*}
    Z_1(\phi_v,\lambda)&=\sum_{i=1}^m W_{P_0,1}^\psi(t_i,R(e_{K,H})\phi_v,\lambda) +\Val{t_{m+1}}^{a+b+1/2}\frac{\zeta_v(c-b)\zeta_v(a+b+1/2)}{\zeta_v(b-c+1)}R(e_{K,H})\phi_v(1) \\
    &+\Val{t_{m+1}}^{a+c+1/2}\frac{\zeta_v(b-c)\zeta_v(a+c+1/2)}{\zeta_v(b-c+1)}N_v(w,\lambda)R(e_{K,H})\phi_v(1),
\end{align*}
where $R(e_{K,H})$ is the convolution by the characteristic function of $\oo_{F_v}^\times$. Note that \eqref{eq:tempered_L} reads
\begin{equation*}
    \cL(\delta_{v,\lambda})=\frac{\zeta_v(a+b+1/2)\zeta_v(a+c+1/2)}{\zeta_v(b-c+1)}.
\end{equation*}
It follows that $Z_1^\natural(\phi_v,\lambda)$ is regular at $\lambda=\nu$ and that 
\begin{equation*}
    RZ_1^\natural(\phi_v,\nu)=N_v(w,\nu)R(e_{K,H})\phi_v(1).
\end{equation*}
This linear form factors through $I_{P_0}^G \nu \to 1$ as claimed.

\subsection{Organization of the paper}

This paper is organized as follows. In \S\ref{sec:preliminaries}, we introduce some notation on automorphic forms. In \S\ref{sec:poles}, we recall the description of the discrete spectrum of $\GL_n$ from \cite{MW89}. In \S\ref{sec:IYZ_periods}, we introduce the formalism of \cite{Zydor} needed to define the regularization of $\cP_H$. In particular, we show that it coincides with the one of \cite{IY}. Finally, we compute the residue of the period using the reversed Maa\ss--Selberg relation. We then use this result in \S\ref{sec:RS_non_tempered} to build the extension $\cP_\pi(\cdot,\lambda)$ and to prove that it factors through the quotient for $\lambda$ in general position. We also provide the alternative description $\cP^{P_+}$ of \eqref{eq:period_alternative}. In \S\ref{sec:ggp_conj_non_tempered}, we build on our global results to give the first proof of Theorem~\ref{thm:local_GGP_intro}. We use it to complete the proof of Theorem~\ref{thm:GGP_global_intro}. Finally, in \S\ref{sec:local_zeta} we switch to the non-Archimedean setting and provide an independent proof of Theorem~\ref{thm:local_GGP_intro} by computing residues of local Zeta integrals.

\subsection{Acknowledgement}

The author thanks Rapha\"el Beuzart-Plessis for helpful discussions and comments. He is also grateful to Wee Teck Gan and Erez Lapid for suggestions on an earlier version of this text.

This work was partly funded by the European Union ERC Consolidator Grant, RELANTRA, project number 101044930. Views and opinions expressed are however those of the author only and do not necessarily reflect those of the European Union or the European Research Council. Neither the European Union nor the granting authority can be held responsible for them. The author was also supported by the Max Planck Institute for Mathematics in Bonn, that he thanks for its hospitality
and financial support.

\section{Preliminaries on automorphic forms}
\label{sec:preliminaries}

\subsection{General notation}

Let $F$ be a field of characteristic zero. All algebraic groups are defined over $F$.

\subsubsection{Reductive groups, parabolic subgroups, characters} Let $G$ be a connected reductive group. Let $Z_G$ be the center of $G$. Let $N_G$ be the unipotent radical of $G$ and let $X^*(G)$ be the group of $F$-algebraic characters of $G$. Set $\fa^*_G=X^*(G) \otimes_{\zz} \rr$ and $\fa_G=\Hom_\zz(X^*(G),\rr)$. Let $ \langle \cdot,\cdot \rangle : \fa_G^* \times \fa_G \to \rr$ be the canonical pairing.

Let $P_0$ be a minimal parabolic subgroup of $G$. Let $M_0$ be a Levi factor of $P_0$. We say that a parabolic subgroup of $G$ is standard (resp. semi-standard) if it contains $P_0$ (resp. if it contains $M_0$). If $P$ is a semi-standard parabolic subgroup of $G$, we will denote by $N_P$ its unipotent radical and by $M_P$ its unique Levi factor containing $M_0$. We have a decomposition $P=M_P N_P$.

Let $A_G$ be the maximal central $F$-split torus of $G$. If $P$ is a semi-standard parabolic subgroup of $G$, set $A_P=A_{M_P}$. We set $\fa_0^*=\fa_{P_0}^*$, $\fa_0=\fa_{P_0}$ and $A_0=A_{P_0}$.

Let $P \subset Q$ be semi-standard parabolic subgroups of $G$. The restriction maps $X^*(Q) \to X^*(P)$ and $X^*(A_P) \to X^*(A_Q)$ induce dual decompositions $\fa_P=\fa_P^Q \oplus \fa_Q$ and $\fa_P^*=\fa_P^{Q,*} \oplus \fa_Q^*$. In particular, we have projections $\fa_0 \to \fa_P^Q$ and $\fa_0^* \to \fa_P^{Q,*}$ denoted by $X \mapsto X_P^Q$ which only depend on the Levi factors $M_P$ and $M_Q$. If $Q=G$, we omit the exponent $G$ in the previous notation.

Set $\fa_{P,\cc}^Q=\fa_{P}^Q \otimes_\rr \cc$ and $\fa_{P,\cc}^{Q,*}=\fa_{P}^{Q,*} \otimes_\rr \cc$. We still denote by $\langle \cdot, \cdot \rangle$ the pairing obtained by extension of scalars. We have decompositions $ \fa_{P,\cc}^{Q}=\fa_{P}^{Q} \oplus i \fa_{P}^{Q}, \quad \fa_{P,\cc}^{Q,*}=\fa_{P}^{Q,*} \oplus i \fa_{P}^{Q,*}$, where $i^2=-1$. We denote by $\Re$ and $\Im$ the real and imaginary parts associated to these decompositions.

\subsubsection{Roots, coroots, weights} \label{subsubsec:roots} Let $P$ be a standard parabolic subgroup of $G$. Let $\Delta_0^P \subset \fa_0^{P,*}$ (resp. $\Sigma_0^P \subset \fa_0^{P,*}$) be the set of simple roots (resp. of roots) of $A_0$ in $M_P \cap P_0$. If $P=G$, we write $\Delta_0$ and $\Sigma_0$. Let $\Delta_P$ (resp. $\Sigma_P$) be the image of $\Delta_0 \setminus \Delta_0^P$ (resp. $\Sigma_0 \setminus \Sigma_0^P$) by the projection $\fa_0^* \to \fa_P^*$. More generally, for $P \subset Q$ let $\Delta_P^Q$ (resp. $\Sigma_P^Q$) be the projection of $\Delta_0^Q \setminus \Delta_0^P$ in $\fa_P^{Q,*}$ (resp. $\Sigma_0^Q \setminus \Sigma_0^P$). Let $\Delta_P^{Q,\vee} \subset \fa_P^Q$ be the set of simple coroots. If $\alpha \in \Delta_{P}^Q$, we denote by $\alpha^\vee$ the associated coroot. By duality, let $\hat{\Delta}_P^Q$ be the set of simple weights. Set 
\begin{align*}
    \fa_{P}^{Q,+}&=\left\{ H \in \fa_P \; | \; \langle \alpha, H \rangle > 0, \; \forall \alpha \in \Delta_{P}^Q \right\}. \\
    \fa_{P}^{Q,*,+}&=\left\{ \lambda \in \fa_P^* \; | \; \langle \lambda, \alpha^\vee \rangle > 0, \; \forall \alpha \in \Delta_{P}^Q \right\}.
\end{align*}
If $Q=G$, we drop the exponent. We denote by $\overline{\fa_{P}^{Q,+}}$ and $\overline{\fa_{P}^{Q,*,+}}$ the closure of these open subsets in $\fa_P^Q$ and $\fa_P^{Q,*}$ respectively. If $\lambda \in \fa_P^*\setminus \{0\}$, we write $\lambda > 0$ if $\lambda$ is a nonnegative linear combination of the simple roots $\Delta_P$.

We say that a functional $\Lambda$ on $\fa_{P,\cc}^*$ is an affine linear form if it is of the form $\Lambda(\lambda)=\langle \lambda, \gamma^\vee \rangle - a$ for $\gamma^\vee \in \fa_P$ and $a \in \cc$. We call its set of zeros an affine hyperplane. If $\gamma^\vee$ is a coroot, then it is an affine root hyperplane. By "$\lambda \in \fa_{P,\cc}^*$ in general position", we mean that $\lambda$ lies outside of a countable union of affine hyperplanes. 

\subsubsection{Weyl group} Let $W$ be the Weyl group of $(G,A_0)$, which is by definition the quotient of the normalizer $N_{G(F)}(A_0(F))$ by the centralizer $Z_{G(F)}(A_0(F))$. It acts on $\fa_0$ and by duality on $\fa_0^*$. We henceforth fix an invariant inner product $(\cdot,\cdot)$ on $\fa_0^*$ for this action. If $w \in W$, we write again $w$ for a representative in $G(F)$ which we choose as in \cite[Section~2]{KS}. 

Let $P=M_P N_P$ and $Q=M_Q N_Q$ be two standard parabolic subgroups of $G$. Let ${}_Q W_P$ be the set of $w \in W$ such that $M_P \cap w^{-1} P_0 w=M_P \cap P_0$ and $M_Q \cap w P_0 w^{-1} = M_Q \cap P_0$.

Let $w \in {}_Q W_P$. Set $P_w=(M_P \cap w^{-1} Q w)N_P$. By \cite[Lemme~V.4.6.]{Renard}, $P_w$ is a standard parabolic subgroup of $G$ included in $P$, with standard Levi factor $M_P \cap w^{-1} M_Q w$. In the same way, $Q_w=(M_Q \cap w P w^{-1})N_Q$ is standard parabolic subgroup of $G$ included in $Q$, with standard Levi factor $M_Q \cap w M_P w^{-1}$. Note that $w \Sigma_{P_w}^P \subset \Sigma_{Q_w}$ and $w^{-1} \Sigma_{Q_w}^Q \subset \Sigma_{P_w}$. Set
\begin{align*}
    W(P;Q)&=\{ w \in {}_Q W_P \; | \;  P_w=P \}=\{w \in {}_Q W_P \; | \; M_P \subset w^{-1} M_Q w \}, \\
    W(P,Q)&=\{w \in {}_Q W_P \; | \; M_P = w^{-1} M_Q w \}.
\end{align*}
Note that $w \in {}_Q W_P$ implies $w \in W(P_w,Q_w)$. Set
\begin{equation*}
    W(P)=\bigcup_{Q} W(P,Q).
\end{equation*}
Write $w_P$ for the longest element in $W(P)$.

If $R$ is another standard parabolic subgroup of $G$, we write ${}_Q W^R_P$ (resp. $W^R(P;Q)$ and $W^R(P,Q)$) for ${}_{Q \cap M_R} W_{P\cap M_R}$ (resp. $W(P \cap M_R; Q \cap M_R)$ and $W(P \cap M_R, Q \cap M_R)$) relatively to the reductive group $M_R$. 

\begin{lem}[{\cite[Lemma~2.2.1.1]{Ch}}] \label{lem:W_CH}
Let $P,Q,R$ be standard parabolic subgroups of $G$ such that $Q \subset R$.
\begin{enumerate}
    \item For any $w \in {}_R W_P$, we have ${}_Q W_{R_w}^R w \subset {}_Q W_P$.
    \item For any $w_2 \in {}_Q W_P$, there is a unique decomposition $w_2=w_1 w$ with $w \in {}_R W_P$ and $w_1 \in {}_Q W_{R_w}^R w$. Moreover, $w_2 \in W(P;Q)$ if and only if $w \in W(P;R)$ and $w_1 \in W^R(R_w;Q)$.
\end{enumerate}
    
\end{lem}

\subsection{Functions on automorphic quotients}

We now assume that $F$ is a number field. Let $G$ be a connected reductive group over $F$.

\subsubsection{Automorphic quotients} Let $\bA$ be the adele ring of $F$, let $\bA_f$ be its ring of finite adeles. Set $F_\infty=F \otimes_\qq \rr$. Let $V_F$ be the set of places of $F$ and let $V_{F,\infty} \subset V_F$ be the subset of Archimedean places. For $v \in V_F$, let $F_v$ be the completion of $F$ at $v$. If $v$ is non-Archimedean, let $q_v$ be the cardinality of the residual field of $F_v$ and $\oo_{v}$ be its ring of integers. Let $\Val{\cdot}$ be the absolute value $\bA^\times \to \rr_+^\times$ given by taking the product of the normalized absolute values $\Val{\cdot}_v$ on each $F_v$. 

Let $P=M_P N_P$ be a semi-standard parabolic subgroup of $G$. Set 
\begin{equation*}
    [G]_{P}=M_P(F) N_P(\bA) \backslash G(\bA).
\end{equation*}
Let $A_{P,\qq}$ be the maximal $\qq$-split subtorus of the Weil restriction $\Res_{F/\qq}A_P$, and let $A_P^\infty$ be the neutral component of $A_{P,\qq}(\rr)$. Set 
\begin{equation*}
    [G]_{P,0}=A_P^\infty M_P(F) N_P(\bA) \backslash G(\bA).
\end{equation*}
If $P=G$, we simply write $[G]$ and $[G]_0$ for $[G]_G$ and $[G]_{G,0}$ respectively.

Let $P$ be a semi-standard parabolic subgroup of $G$. There is a canonical morphism $ H_P : P(\bA) \to \fa_P$ such that $\langle \chi, H_P(g) \rangle=\log \Val{\chi(g)}$ for any $g \in P(\bA)$ and $\chi \in X^*(P)$. The kernel of $H_P$ is denoted by $P(\bA)^1$. We extend it to $ H_P : G(\bA) \to \fa_P$ which satisfies: for any $g \in G(\bA)$ we have $H_P(g)=H_P(p)$ whenever $g \in pK$ with $p \in P(\bA)$. If $P=P_0$, we write $H_0=H_{P_0}$.

We set
\begin{equation*}
    [G]_P^1=M_P(F) N_P(\bA) \backslash P(\bA)^1 K.
\end{equation*}
If $P=G$, we simply write $[G]^1$. Let $\rho_P$ be the unique element in $\fa_P^{*}$ such that for every $m \in M_P(\bA)$ we have $\Val{\det(\mathrm{Ad}_P(m))}=\exp(\langle 2 \rho_P,H_P(m) \rangle)$, where $\mathrm{Ad}_P$ is the adjoint action of $M_P$ on the Lie algebra of $N_P$. For every $g \in G(\bA)$, we then set $ \delta_P(g)=\exp(\langle 2 \rho_P,H_P(g) \rangle)$.

Let $K=\prod_{v \in V_F} K_v \subset G(\bA)$ be a "good" maximal compact subgroup in good position relative to $M_0$. We write $K=K_\infty K^\infty$ where $K_\infty=\prod_{v \in V_{F,\infty}} K_v$ and $K^\infty=\prod_{v \in V_F \setminus V_{F,\infty}} K_v$. By a \emph{level} $J$ of $G$, we mean an open-compact subgroup $J$ of $G(\bA_f)$.

\subsubsection{Haar measures} \label{subsubsec:first_measure} 

We take a Haar measure $dg$ on $G(\bA)$, with factorization $dg=\prod_v d g_v$ where for all place $v$, $dg_v$ is a Haar measure on $G(F_v)$. This implicitly implies that for almost all place $v$ the volume of $K_v$ is $1$. 

Let $P$ be a semi-standard parabolic subgroup of $G$. We equip $\fa_P$ with the Haar measure that gives covolume $1$ to the lattice $\Hom(X^*(P),\zz)$. We equip $A_P^\infty$ with the Haar measure compatible with the isomorphism $A_P^\infty \simeq \fa_P$ induced by $H_P$. If $P \subset Q$, we equip $\fa_P^Q=\fa_P/\fa_Q$ with the quotient measure. 

For each $v \in V_F$, we give $K_v$ the invariant probability measure. This yields a product measure on $K$. If $N$ is an unipotent group, we give $N(\bA)$ the Haar measure whose quotient by the counting measure on $N(F)$ gives $[N]$ volume $1$. We equip $M_P(\bA)$ with the unique Haar measure such that
\begin{equation}
\label{eq:Levi_measure}
    \int_{G(\bA)} f(g) dg= \int_{N_P(\bA)} \int_{M_P(\bA)} \int_K f(nmk) \exp(-\langle 2 \rho_P,H_P(m) \rangle) dkdmdn
\end{equation}
for every continuous and compactly supported function $f$ on $G(\bA)$. We equip $M_P(\bA)^1$ with the Haar measure compatible with the isomorphism $M_P(\bA)^1 \times A_P^\infty \to M_P(\bA)$.

We give $[G]_P$ the quotient of our measure on $G(\bA)$ by the product of the counting measure on $M_P(F)$ with our measure on $N_P(\bA)$. Moreover, note that the action of $a \in A_P^\infty$ by left translation on $[G]_P$ multiplies the measure by $\delta_P^{-1}(a)$. By taking the quotient of the measure on $[G]_P$ by that of $A_P^\infty$, we obtain a "semi-invariant" measure on $[G]_{P,0}$.

\subsubsection{Functions}

We say that a function $\varphi : G(\bA) \to \cc$ is smooth if it is right-invariant by a level $J$, and if for all $g_f \in G(\bA_f)$ the map $g_\infty \in G(F_\infty) \mapsto \varphi(g_f g_\infty)$ is smooth in the usual sense. The space of smooth functions receives an action by right-translation of $G(\bA)$ and $\cU(\fg_\infty)$ (the enveloping algebra of the complexification of $\fg_\infty$ the Lie algebra of $G(F_\infty)$). We denote it by $R$.

For every semi-standard parabolic subgroup $P$ of $G$, we have a height function $\norm{\cdot}_P$ as in \cite[Section~2.4.1]{BPCZ}. For every smooth function $\varphi$ on $[G]_P$, every $X \in \cU(\fg_\infty)$ and $N \in \rr$, set
\begin{equation*}
    \norm{\varphi}_{N,X}:=\sup_{x \in [G]_P} \norm{x}_P^N \Val{(R(X)\varphi)(x)}.
\end{equation*}
If $X=1$, we simply write $\norm{\varphi}_N$. The function $\varphi$ is said to be of \emph{rapid decay} if all the $\norm{\varphi}_{N,X}$ are finite, and of \emph{uniform moderate growth} if there exists $N \in \rr$ such that all the $\norm{\varphi}_{N,X}$ are finite.

Let $L^2([G]_{P,0})$ the space of functions on $[G]_P$ that transform by $\delta_P^{1/2}$ under left-translation by $A_P^\infty$ such that the Petersson norm
\begin{equation*}
    \langle \varphi, \varphi \rangle_{P,\Pet}=\norm{\varphi}_{P,\Pet}:=\int_{[G]_{P,0}} \Val{\varphi(g)}^2 dg
\end{equation*}
is finite.

\subsection{Automorphic representations}

We keep the assumption that $F$ is a number field and that $G$ is connected reductive over $F$. We take $P$ to be a semi-standard parabolic subgroup of $G$.

\subsubsection{Automorphic forms and representations}
We denote by $\cA(G)$ the space of automorphic forms on $G$ as in \cite[Section~2.7]{BPCZ}. More generally we have a space of automorphic forms $\cA_P(G)$ which are functions on $[G]_P$. 

Let $\cA_P^0(G)$ be the subspace of $\varphi \in \cA_P(G)$ which transform by $\delta_P^{1/2}$ under left-translation by $A_P^\infty$. Set $\cA_{P,\disc}(G)=\cA_P^0(G) \cap L^2([G]_{P,0})$. The spaces $\cA_{P}^0(G)$ and $\cA_{P,\disc}(G)$ are given the topology of \cite[Section~2.7.1]{BPCZ}.

We define a discrete automorphic representation of $G(\bA)$ to be a topologically irreducible subrepresentation of $\cA_{\disc}(G)$. Let $\Pi_{\disc}(G)$ be the set of such representations. For $\pi \in \Pi_{\disc}(G)$, let $\cA_\pi(G)$ be the $\pi$-isotypic component of $\cA_{\disc}(G)$. Note that $\pi$ has trivial central character on $A_G^\infty$.

For $\pi \in \Pi_{\disc}(M_P)$, let $\cA_{P,\pi}(G)$ be the subspace of $\varphi \in \cA_{P,\disc}(G)$ such that for all $g \in G(\bA)$ the map $ m \in [M_P] \mapsto \delta_P(m)^{-1/2} \varphi(mg)$ belongs to $\cA_\pi(M_P)$. For any $\lambda \in \fa_{P,\cc}^*$, set $\pi_\lambda=\pi \otimes \exp( \langle \lambda, H_{M_P}(\cdot) \rangle )$, and for $\varphi \in \cA_{P,\pi}(G)$ define
\begin{equation*}
    \varphi_\lambda(g)=\exp( \langle \lambda, H_{P}(g) \rangle ) \varphi(g).
\end{equation*}
The map $\varphi \mapsto \varphi_\lambda$ identifies $\cA_{P,\pi}(G)$ with a subspace of $\cA_P(G)$ denoted by $\cA_{P,\pi,\lambda}(G)$.

For any standard parabolic subgroup $Q \subset P$ and any $\varphi \in \cA_P(G)$, we have a constant term $\varphi_Q$ defined by
\begin{equation*}
    \varphi_Q(g)=\int_{[N_Q]} \varphi(ng) dn, \quad g \in [G]_Q.
\end{equation*}
Let $\cA_{P,\cusp}(G) \subset \cA^0_P(G)$ be the subspace of $\varphi$ such that $\varphi_Q=0$ for all $Q \subsetneq P$. Let $\Pi_{\mathrm{cusp}}(G)$ be the set of topologically irreducible subrepresentations of $\cA_{\mathrm{cusp}}(G)$, where we equip this space with the subspace topology from $\cA(G)$. It is a subset of $\Pi_{\disc}(G)$. 

We say that $\varphi \in \cA_{P,\disc}(G)$ is \emph{residual} if it is orthogonal to $\cA_{P,\mathrm{cusp}}(G)$ under $\langle \cdot,\cdot \rangle_{P,\Pet}$. 

Let $\pi \in \Pi_{\disc}(M_P)$. By \cite{Flath}, it decomposes as $\pi=\otimes'_v \pi_v$. For every place $v$, we write $I_P^G \pi_v$ for the smooth parabolic induction of $\pi_v$ for $G(F_v)$.

\subsubsection{Intertwining operators} Let $Q$ be a standard parabolic subgroup of $G$. Let $\pi \in \Pi_\disc(M_P)$. Let $w \in W(P,Q)$ and $\lambda \in \fa_{P,\cc}^*$ such that $\langle \Re(\lambda), \alpha^\vee \rangle$ is large enough for any $\alpha \in \Delta_P$ such that $w \alpha <0$. For $\varphi \in \cA_{P,\pi}(G)$, consider the absolutely convergent integral
\begin{equation*}
    (M(w,\lambda) \varphi)_{w \lambda}(g)=\int_{(N_Q \cap w N_P w^{-1})(\bA) \backslash N_Q(\bA)} \varphi_\lambda(w^{-1} ng)dn, \quad g \in [G]_Q.
\end{equation*}
By \cite{Langlands} and \cite{BL}, it admits a meromorphic continuation to $\fa_{P,\cc}^*$. This defines a continuous intertwining operator for any regular $\lambda$
\begin{equation}
\label{eq:global_operator}
    M(w,\lambda) : \cA_{P,\pi}(G) \to \cA_{Q,w\pi}(G).
\end{equation}
By \cite[Theorem~2.3]{BL}, the singularities of $M(w,\lambda)$ are located along affine root hyperplanes.

If $R$ is another standard parabolic and if $w_1 \in W(P,Q)$ and $w_2 \in W(Q,R)$, by \cite[Theorem~2.3.5]{BL} we have the functional equation
\begin{equation}
\label{eq:M_equation}
    M(w_2,w_1\lambda)M(w_1,\lambda)\varphi=M(w_2 w_1,\lambda)\varphi.
\end{equation}

\subsubsection{Eisenstein series}
\label{subsubsec:Eisenstein}
Let $P \subset Q$ be standard parabolic subgroups of $G$. For any $\varphi \in \cA_{P,\disc}(G)$ and $\lambda \in \fa_{P,\cc}^*$ we define
\begin{equation}
\label{eq:partial_Eisenstein}
    E^Q(g,\varphi,\lambda)=\sum_{\gamma \in P(F) \backslash Q(F)} \varphi_\lambda(\gamma g)=\sum_{\gamma \in M_Q \cap P(F) \backslash M_Q(F)} \varphi_\lambda(\gamma g), \quad g \in G(\bA).
\end{equation}
We call it a generalized Eisenstein series. The sum is absolutely convergent for $\Re(\lambda)$ in a suitable cone. It admits a meromorphic continuation to $\fa_{P,\cc}^*$ by \cite{Langlands} and \cite{BL}. If $Q=G$, we write $E(g,\varphi,\lambda)$. By \cite[Theorem~2.3]{BL}, the singularities of $E^Q(\varphi,\lambda)$ are along affine root hyperplanes.

For regular $\lambda$, let $E_Q(\varphi,\lambda)$ be the constant term of $E(\varphi,\lambda)$ along $Q$. By \cite[Lemma~6.10]{BL}, 
\begin{equation}
\label{eq:constant_term}
    E_Q(\varphi,\lambda)=\sum_{w \in {}_Q W_P} E^Q(M(w,\lambda)\varphi_{P_w},w \lambda).
\end{equation}
If $\varphi \in \cA_{P,\mathrm{cusp}}(G)$, this formula reduces to 
\begin{equation}
\label{eq:constant_term_cuspidal}
    E_Q(\varphi,\lambda)=\sum_{w \in W(P;Q)} E^Q(M(w,\lambda)\varphi,w \lambda).
\end{equation}
We also have the following easy relation between intertwining operators and Eisenstein series.

\begin{lem}
\label{lem:ME=EM}
    Let $\varphi \in \cA_{P,\disc}(G)$. Let $Q, Q'$ be two standard parabolic subgroups of $G$ such that $P \subset Q$. Let $w \in W(Q,Q')$. Then for regular $\lambda \in \fa_{P,\cc}^*$ we have
    \begin{equation}
    \label{eq:ME=EM}
        M(w,\lambda)E^Q(\varphi,\lambda)=E^{Q'}(M(w,\lambda) \varphi,w\lambda).
    \end{equation}
\end{lem}

\begin{proof}
    This holds in the region of absolute convergence, and then for $\lambda$ in general position by analytic continuation.
\end{proof}

\subsection{Exponents and cuspidal components} \label{subsubsec:exponents} Let $\varphi \in \cA_P(G)$. By \cite[Section~I.3.2.]{MW95}, $\varphi$ admits a finite decomposition
\begin{equation}
\label{eq:exponents}
    \varphi(g)=\sum_i q_i(H_P(g))  \exp(\langle \lambda_i, H_P(g) \rangle)\varphi_i(g), \quad g \in G(\bA),
\end{equation}
where $q_i \in \cc[\fa_P]$, $\lambda_i \in \fa_{P,\cc}^*$ and $\varphi_i \in \cA_P^0(G)$. The set of distinct exponents $\lambda_i$ occurring in \eqref{eq:exponents} is uniquely determined by $\varphi$ and is denoted $\cE_P(\varphi)$. For $Q \subset P$, let $\cE_Q(\varphi)$ be the exponents of $\varphi_Q$.

Let $\varphi \in \cA_{P}^0(G)$. As cuspidal automorphic forms are of rapid decay (\cite[Section~I.2.18.]{MW95}), for every $\varphi_0 \in \cA_{P,\mathrm{cusp}}(G)$ the pairing $\langle \varphi, \varphi_0 \rangle_{P,\Pet}$ makes sense. By \cite[Section~I.2.18.]{MW95}, there exists a unique $\varphi^{\mathrm{cusp}} \in \cA_{P,\mathrm{cusp}}(G)$ such that for all $\varphi_0 \in \cA_{P,\mathrm{cusp}}(G)$ we have $    \langle \varphi, \varphi_0 \rangle_{P,\Pet}=\langle \varphi^{\mathrm{cusp}}, \varphi_0 \rangle_{P,\Pet}$.

If $\varphi \in \cA_P(G)$ admits a decomposition \eqref{eq:exponents}, we set
\begin{equation*}
    \varphi^{\mathrm{cusp}}(g)=\sum_i q_i(H_P(g))  \exp(\langle \lambda_i, H_P(g) \rangle)\varphi_i^{\mathrm{cusp}}(g).
\end{equation*}
It is independent of the choice of the decomposition. The tuple $(\varphi_Q^{\mathrm{cusp}})_{Q \subset P}$ (where $\varphi_Q^{\mathrm{cusp}}$ is the cuspidal component of the constant term $\varphi_Q$) is called the family of cuspidal components of $\varphi$. The following result is classical.

\begin{lem}[{\cite[Proposition~I.3.4]{MW95}}] \label{lem:vanishing_cusp_comp}
    Let $\varphi \in \cA_P(G)$. If all the cuspidal components of $\varphi$ are zero, then $\varphi=0$.
\end{lem}

If $\varphi \in \cA_{P,\disc}(G)$, for any regular $\lambda$ we have $E(\varphi,\lambda) \in \cA(G)$. If this Eisenstein series is proper (that is if $P\neq G$), then $E(\varphi,\lambda)$ is orthogonal to cusp forms, i.e. $E(\varphi,\lambda)^\cusp=0$.

\subsection{Pseudo-Eisenstein series} \label{subsubsec:pseudo_eisenstein} 

Let $\cP\cW(\fa_{P,\cc}^*)$ be the Paley--Wiener space of functions on $\fa_{P,\cc}^*$ obtained as Fourier transforms of compactly supported smooth functions on $\fa_P$. If $\cV$ is a finite-dimensional subspace of $\cA_{P,\cusp}(G)$, we define $\cP\cW_{P,\cV}$ to be the space of $\cV$-valued entire functions on $\fa_{P,\cc}^*$ of Paley--Wiener type. We write $\cP \cW_P$ for the direct sum of all the $\cP \cW_{P,\cV}$. For $\Phi \in \cP\cW_{P}$ and any $\kappa \in \fa_P^*$, consider 
\begin{equation}
\label{eq:F_Phi_defi}
    F_\Phi(g)=\int_{\substack{\lambda \in \fa_{P,\cc}^* \\ \Re(\lambda)=\kappa}} \Phi(\lambda)(g)\exp(\langle \lambda,H_P(g) \rangle) d\lambda, \quad g \in G(\bA).
\end{equation}
It is independent of the choice of $\kappa$. We define the pseudo-Eisenstein series associated to $\Phi$ by
\begin{equation*}
    E(g,F_\Phi)=\sum_{\gamma \in P(F) \backslash G(F)} F_\Phi(\gamma g), \quad g \in [G].
\end{equation*}
where this sum is actually over a finite set which depends on $g$ by \cite[Lemma~5.1]{Art78}. This pseudo-Eisenstein series is rapidly decreasing. Moreover, by \cite[Section~II.1.11]{MW95} we have
\begin{equation}
\label{eq:pseudo_Eisenstein_unfold}
    E(g,F_\Phi)=\int_{\Re(\lambda)=\kappa} E(g,\Phi(\lambda),\lambda) d\lambda, \quad g \in [G],
\end{equation}
for any $\kappa$ in the region of the absolute convergence of Eisenstein series.

\section{Discrete Eisenstein series on \texorpdfstring{$\GL_n$}{GLn}}
\label{sec:poles}

The goal of this section is to fix specific notation in the case $G=\GL_n$ and to recall the classification of the discrete automorphic spectrum of $\GL_n$ from \cite{MW89}.

\subsection{Choice of coordinates}
\label{sec:choice_coord}

\subsubsection{Standard parabolic subgroups} 
\label{subsubsec:blocks}

We choose $P_0$ to be the standard Borel subgroup of upper triangular matrices, and $M_0=T_0$ to be the diagonal maximal torus. The group $K$ is the standard maximal compact subgroup of $\GL_n(\bA)$. 

If $P$ is a standard parabolic subgroup of $\GL_n$, its standard Levi factor is of the form $M_P=\GL_{n_1} \times \hdots \times \GL_{n_m}$ for some integers $n_1, \hdots, n_m$. With this notation, we associate to $P$ the tuple $\underline{n}(P):=(n_1,\hdots,n_m)$. This completely characterizes $P$ among the standard parabolic subgroups of $\GL_n$. We will often write $M_P=\prod \GL_{n_i}$, where we implicitly assume that the product is taken in the order $i=1, \hdots, m$.

We identify $\fa_P$ and $\fa_P^*$ with $\rr^m$ in the following way. The canonical basis $(e_i^*)$ of $X^*(P)$ is sent to the canonical basis of $\rr^m$ (still denoted $(e_i^*)$). Then we let $(e_i)$ be the dual basis of $\fa_P$ under the pairing $\langle \cdot,\cdot,\rangle$, which is sent to the canonical basis of $\rr^m$ (still denoted by $(e_i)$). In these coordinates, the injections $\fa_P^* \to \fa_0^*$ and $\fa_P \to \fa_0$ read
\begin{equation*}
    (\lambda_1, \hdots, \lambda_m) \mapsto \left( \underbrace{\lambda_1,\hdots,\lambda_1}_{n_1}, \hdots , \underbrace{\lambda_m,\hdots,\lambda_m}_{n_m} \right), \quad  (H_1,\hdots,H_m) \mapsto  \left( \underbrace{\frac{H_1}{n_1},\hdots,\frac{H_1}{n_1}}_{n_1}, \hdots , \underbrace{\frac{H_m}{n_m},\hdots,\frac{H_m}{n_m}}_{n_m} \right).
\end{equation*}
We equip the space $\fa_0$ with its canonical inner product $(\cdot,\cdot)$. By restriction, we obtain a Euclidean structure $(\cdot,\cdot)$ on $\fa_P$, but the basis $(e_i)$ is not orthonormal. This yields an identification $\fa_P^* \simeq \fa_P$, and under this identification any $e_i^*$ is sent to $n_i e_i$. 

Let $1 \leq i<j \leq m$ and let $\alpha \in \Sigma_P$ be the simple root corresponding to the transposition of the i\textsuperscript{th} and j\textsuperscript{th} blocks of $M_P$. Then with our choices of coordinates we have
\begin{equation}
\label{eq:explicit_roots}
    \alpha=\frac{e_i^*}{n_i}-\frac{e_j^*}{n_j}, \quad \alpha^\vee=e_i-e_j.
\end{equation}

We identify the Weyl group $W$ of $\GL_n$ with $\fS_n$ the symmetric group of degree $n$. If $w \in W$ corresponds to the permutation $\sigma$, we define the representative $\dot{w} \in \GL_n(F)$ by 
\begin{equation*}
    \dot{w}\mathbf{e}_i=(-1)^{|\{ j<i \; | \; \sigma(i)<\sigma(j) \} |} \mathbf{e}_{\sigma(i)}, \quad 1 \leq i \leq n,
\end{equation*}
where $\mathbf{e}_1, \hdots, \mathbf{e}_n$ are the vectors in the canonical basis of $F^n$. This is the choice made in \cite[Section~2]{KS} and it plays a role in the normalization of local intertwining operators (see \S\ref{subsec:local_operator}). We will often write again $w$ for the lift $\dot{w} \in \GL_n(F)$. 

\subsubsection{Representatives of double quotients}
\label{subsec:rep_double_quotients}

If $P=M_P N_P$ is a standard parabolic subgroup of $\GL_n$, we have an embedding of $W(M_P)$ the Weyl group of $M_P$ inside $W$. Write $M_P=\GL_{n_1} \times \hdots \times \GL_{n_m}$. We have an identification (of sets) $W(P) \simeq \fS_m$ such that, if $\sigma \in \fS_m$, we have
\begin{equation*}
    \sigma M_P \sigma^{-1}=M_{n_{\sigma^{-1}(1)}} \times \hdots \times \hdots M_{n_{\sigma^{-1}(m)}}.
\end{equation*}
We will often identify a $w \in W(P)$ with an element in $\fS_m$. We will write $w.P$ for the standard parabolic subgroup of $\GL_n$ with standard Levi factor $wM_Pw^{-1}$. We say that $w \in W$ acts by permutation on the blocks of $M_P$ (or simply acts by blocks on $P$) if it belongs to $W(P)$. 

Let $Q$ be another standard parabolic subgroup of $\GL_n$. Let $w \in {}_Q W _P$. Then $w$ acts by blocks on $P_w$, and $w^{-1}$ by blocks on $Q_w$. Write $M_{P_w}=\prod_{i=1}^{m} \prod_{j=1}^{m_i} \GL_{n_{i,j}}$, where for each $i$ we have  $\sum_{j=1}^{m_i} n_{i,j}=n_i$. If $M_Q=\prod_i \GL_{n_i'}$, we have a similar decomposition $M_{Q_w}=\prod_i \prod_j \GL_{n_{i,j}'}$. The condition $w \in {}_Q W_P$ means that for each $i$, $w$ preserves the order of the blocks $\GL_{n_{i,j}}$, $j=1, \hdots, m_i$, and that $w^{-1}$ preserves the order of the blocks $\GL_{n_{i,j}'}$, $j=1, \hdots, m_i'$. Conversely, let $P$ and $Q$ be standard parabolic subgroups of $\GL_n$. Let $w \in W$. Then if $w$ and $w^{-1}$ preserve the order of the blocks of $P$ and $Q$ as explained above, then $w \in {}_Q W_P$. 

\subsection{Discrete automorphic forms for \texorpdfstring{$\GL_n$}{GLn}}
\label{subsec:residual}

\subsubsection{The classification of~\cite{MW89}}
\label{subsubsec:disc_gln}
Let $\pi \in \Pi_{\mathrm{disc}}(\GL_n)$. There exist integers $r, d \geq 1$ with $n=rd$ and $\sigma \in \Pi_{\mathrm{cusp}}(\GL_r)$ such that any $\varphi \in \cA_\pi(\GL_n)$ is obtained as the residue of an Eisenstein series built from a $\phi \in \cA_{P_{\pi},\sigma^{\boxtimes d}}(\GL_n)$ where $P_{\pi} \subset \GL_n$ is the standard parabolic subgroup of Levi factor $\GL_{r}^{d}$. More precisely, define
\begin{equation}
\label{eq:nu_pi_defi}
    \nu_{\pi}=-\rho_{P_\pi}/r, \quad \sigma_\pi=\sigma^{\boxtimes d} \in \Pi_{\cusp}(M_{P_\pi}),
\end{equation}
and set for $\lambda \in \fa_{P_\pi,\cc}^*$
\begin{equation}
\label{eq:L_pi_res_defi}
    L_{\pi,\mathrm{res}}(\lambda)=\prod_{\alpha \in \Delta_{P_\pi}} \left( \langle \lambda, \alpha^\vee \rangle -1 \right).
\end{equation}
Note that $ L_{\pi,\mathrm{res}}(-\nu_{\pi})=0$. We introduce a minus sign in \eqref{eq:nu_pi_defi} to follow the convention of \cite{Ch}. For every $g \in \GL_n(\bA)$, denote by $E^*(g,\phi,\cdot)$ the map $\lambda \mapsto  L_{\pi,\mathrm{res}}(\lambda) E(g,\phi,\lambda)$. It is holomorphic in a neighborhood of $-\nu_\pi$. By \cite{MW89} we have
\begin{equation}
\label{eq:residual}
    \varphi(g)=E^*(g,\phi,-\nu_{\pi}).
\end{equation}
As $\pi$ is the unique irreducible quotient of $\cA_{P_\pi,\sigma_\pi,-\nu_\pi}(\GL_n)$ (see Corollary~\ref{cor:same_quotient}), it deserves to be called a Speh representation and we write $ \pi=\Speh(\sigma,d)$.

\subsubsection{The induced case} 
\label{subsubsec:residual_blocks}
We now deal with representations induced from the discrete spectrum of a Levi subgroup, i.e. with representations of Arthur type of $\GL_n$. Let $P=M_P N_P$ be a standard parabolic subgroup. Let $\pi \in \Pi_{\mathrm{disc}}(M_P)$. Write $M_P=\GL_{n_1} \times \hdots \times \GL_{n_m}$ and $\pi = \pi_1 \boxtimes \hdots \boxtimes \pi_m$ accordingly. By \S\ref{subsubsec:disc_gln}, there exist integers $r_i, d_i \geq 1$ with $n_i=r_i d_i$ and some representations $\sigma_i \in \Pi_{\mathrm{cusp}}(\GL_{r_i})$ such that any $\varphi_i \in \cA_{\pi_i}(\GL_{n_i})$ is obtained as the residue of an Eisenstein series built from a $\phi_i \in \cA_{P_{\pi_i},\sigma_i^{\boxtimes d_i}}(\GL_{n_i})$ where $P_{\pi_i} \subset \GL_{n_i}$ is the standard parabolic subgroup of Levi factor $\GL_{r_i}^{d_i}$. Set $P_{\pi}=(P_{\pi_1} \times \hdots \times P_{\pi_m})N_P$ and $\sigma_\pi=\sigma_1^{\boxtimes d_1} \boxtimes \hdots \boxtimes \sigma_m^{\boxtimes d_m}$ which is a cuspidal automorphic representation of $M_{P_\pi}$. 

Note that any $\varphi \in \cA_{P,\pi}(\GL_n)$ is residual unless $\pi$ is cuspidal (i.e. $P=P_\pi$). Indeed, this follows from the fact that Eisenstein series are orthogonal to cusp forms (in the sense of \S\ref{subsubsec:Eisenstein}) and that we may compute the residue under the Petersson inner-product by \cite[Theorem~2.2]{Lap} (see also \cite[Lemma~9.4.2.1]{BoiPhD} for a closely related argument).

Let $Q$ and $R$ be a parabolic of $\GL_n$ such that $P_{\pi} \subset Q \subset P$. Write $Q \cap M_P=Q_1 \times \hdots \times Q_m$. We have a decomposition $\fa_Q^{P,*}=\oplus_{i=1}^m \fa_{Q_i}^{\GL_{n_i},*}$. Set
\begin{equation}
\label{eq:nu_exp_defi}
    \nu_{Q,\pi}=\left(-\rho_{Q_i}^{\GL_{n_i}}/r_i \right)_{1 \leq i \leq r}
\end{equation}
written accordingly. If the context is clear, we omit the subscript $\pi$. If $Q=P_\pi$, we will write $\nu_\pi$. 

By construction, if $P_\pi \subset Q \subset P$, for any $\varphi \in \cA_{P,\pi}(\GL_n)$ we have $\varphi_{Q,-\nu_Q} \in \cA_Q^0(\GL_n)$ (see Lemma~\ref{lem:contant_term}). Moreover, note that $\varphi_Q=0$ unless $P_\pi \subset Q$. If $w \in {}_Q W_P$ such that $P_\pi \subset P_w$, then we set $\varphi_w:=\varphi_{P_w}$ and $\nu_w:=\nu_{P_w}$.

Set
\begin{equation}
\label{eq:residual_L_pi}
     L_{\pi,\mathrm{res}}(\lambda)=\prod_{\alpha \in \Delta_{P_\pi}^P} \left( \langle \lambda, \alpha^\vee \rangle -1 \right).
\end{equation}
For every $g \in \GL_n(\bA)$ and $\phi \in \cA_{P_\pi,\sigma_\pi}(\GL_n)$ denote by $E^{P,*}(g,\phi,\cdot)$ the partial residues of Eisenstein series $\lambda \mapsto  L_{\pi,\mathrm{res}}(\lambda)E^P(g,\phi,\lambda)$. It is holomorphic in a neighborhood of $-\nu_\pi$. By exactness of induction, for every $\varphi \in \cA_{P,\pi}(\GL_n)$ there exists $\phi \in \cA_{P_\pi,\sigma_\pi}(\GL_n)$ such that $\varphi=E^{P,*}(\phi,-\nu_\pi)$. We write $\pi=\boxtimes_{i=1}^m \Speh(\sigma_i,d_i)$.

Finally, let $w^*_\pi$ be the longest element in $W^P(P_\pi,P_\pi)$, i.e. the unique element $w$ in this set such that $w(P_\pi \cap M_P)w^{-1}$ is opposed to $P_\pi \cap M_P$. Note that it acts by identity on $\fa_{P}^*$ and that $w_\pi^* \sigma_\pi=\sigma_\pi$ and $w_\pi^* \nu_{\pi}=-\nu_{\pi}$.

\subsection{Normalization of intertwining operators}
\label{sec:norm_operator}

In this section, let $P$ be a standard parabolic subgroup of $\GL_n$ and let $\pi \in \Pi_{\disc}(M_P)$. Let $Q$ be a standard parabolic subgroup of $\GL_n$. Let $w \in W(P,Q)$ and take $\lambda \in \fa_{P,\cc}^*$ in general position. Denote by $M_\pi(w,\lambda)$ the restriction of $M(w,\lambda)$ (defined in \eqref{eq:global_operator}) to the subspace $\cA_{P,\pi}(\GL_n) \subset \cA_P(\GL_n)$. Following \cite{Art82}, we normalize $M_\pi(w,\lambda)$ as
\begin{equation}
    \label{eq:normalization_non_twisted} M_\pi(w,\lambda)=n_\pi(w,\lambda)N_\pi(w,\lambda).
\end{equation}
Here $n_\pi(w,\lambda)$ is a meromorphic function in $\lambda$ referred to as "the scalar factor", and $N_\pi(w,\lambda)$ is the so-called "normalized operator". If the context is clear, we will remove the subscript $\pi$. We describe these objects below.

\subsubsection{The scalar factor} 
\label{subsubsec:scalar_factor}
Write $M_P=\GL_{n_1} \times \hdots \times \GL_{n_m}$ and $\pi=\pi_1 \boxtimes \hdots \boxtimes \pi_m$. Let $\beta \in \Sigma_{P}$ be the positive root of $P$ associated to the two blocks $\GL_{n_i}$ and $\GL_{n_j}$, with $1 \leq i < j \leq m$. Set
\begin{equation}
    \label{eq:n_pi_formula}
    n_\pi(\beta,s)=\frac{L(1-s,\pi_i^\vee \times \pi_j)}{L(1+s,\pi_i \times \pi_j^\vee)}, \quad \text{and} \quad  n_\pi(w,\lambda)=\prod_{\substack{\beta \in \Sigma_{P} \\ w \beta <0}} n_\pi(\beta,\langle \lambda, \beta^\vee \rangle).
\end{equation}
Here $L$ is the completed Rankin--Selberg $L$ function from \cite{JPSS83}.

Let $\tau$ and $\tau'$ be discrete automorphic representations of some $\GL_k$ and $\GL_{k'}$. Write $\tau=\Speh(\sigma,d)$ and $\tau'=\Speh(\sigma',d')$. Then we have the formula 
\begin{equation}
\label{eq:discrete_L}
    L(s,\tau \times \tau')=\prod_{i=1}^d \prod_{j=1}^{d'} L\left(s+\frac{d-2i+1}{2}+\frac{d'-2j+1}{2},\sigma \times \sigma'\right).
\end{equation}
Moreover, the $L$-function $L(s,\sigma \times \sigma')$ is meromorphic and non-zero if $\Re(s) > 1$. It is holomorphic unless $\sigma^\vee \simeq \sigma'$ in which case it has a simple pole at $s=0$ and $s=1$.

\subsubsection{Local normalized intertwining operators} 
\label{subsubsec:local_normalized_operator}

Let $\phi \in \cA_{P,\pi}(\GL_n)$. Assume that $\phi=\otimes'_v \phi_v$ is factorizable, so that for all place $v$ we have $\phi_v \in I_P^{\GL_n} \pi_v$. Let $\tS \subset V_F$ be a finite set of places such that $\phi_v$ is unramified if $v \notin \tS$. By \cite[Theorem~2.1]{Art89}, we have a factorization
\begin{equation}
\label{eq:unram_facto_local_intertwin}
    N_\pi(w,\lambda)\phi=\prod_{v \in \tS} N_{\pi_v}(w,\lambda)\phi_v
\end{equation}
where the $N_{\pi_v}(w,\lambda)$ are meromorphic local intertwining operators $I_{P}^{\GL_n} \pi_{v,\lambda} \to I_{Q}^{\GL_n} w\pi_{v,\lambda}$. We will define more closely these operators in Section~\ref{sec:ggp_conj_non_tempered}. For now, we only recall their main properties which follow from \cite[Theorem~2.1]{Art89} and \cite[Proposition~I.10]{MW89}.

\begin{theorem}
    \label{thm:N}
    Let $\pi_v$ be a smooth irreducible and unitary representation of $M_P(F_v)$.
    \begin{enumerate}
        \item For each $\phi_v \in I_{P}^{\GL_n} \pi_{v}$ the vector $N_{\pi_v}(w,\lambda)\phi_v$ is a rational function in the variables $\langle \lambda, \alpha \rangle$ (resp. $q_v^{-\langle \lambda, \alpha \rangle}$) for the $\alpha \in \Delta_P$ such that $w \alpha <0$ in the Archimedean case (resp. the non-Archimedean case). It is holomorphic in the region
         \begin{equation*}
           \bigcap_{\substack{\alpha \in \Sigma_{P} \\ w \alpha <0}} \left\{ \lambda \in \fa_{P,\cc}^* \; | \; \langle \Re(\lambda),\alpha^\vee \rangle \geq 0 \right\}.
        \end{equation*}
        \item If $w_1 \in W(P,Q)$ and $w_2 \in W(Q,R)$ we have
        \begin{equation*}
            N_{w_1 \pi_v}(w_2,w_1\lambda)N_{\pi_v}(w_1,\lambda)=N_{\pi_v}(w_2w_1,\lambda).
        \end{equation*}
    \end{enumerate}
\end{theorem}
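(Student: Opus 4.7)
The plan is to define the normalized operator by
\[ N_{\pi_v}(w,\lambda) = n_{\pi_v}(w,\lambda)^{-1} M_{\pi_v}(w,\lambda), \]
where $M_{\pi_v}(w,\lambda)$ is the unnormalized local intertwining integral
\[ (M_{\pi_v}(w,\lambda)\phi_v)(g) = \int_{(N_Q \cap w N_P w^{-1})(F_v) \backslash N_Q(F_v)} \phi_v(w^{-1} n g)\, dn, \]
absolutely convergent for $\Re(\lambda)$ in a suitable cone and then meromorphically continued, and $n_{\pi_v}(w,\lambda)$ is the local analogue of \eqref{eq:n_pi_formula}, built from local Rankin--Selberg $L$- and $\varepsilon$-factors. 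Compatibility with the global normalization \eqref{eq:normalization_non_twisted} at unramified places, i.e.\ the factorization \eqref{eq:unram_facto_local_intertwin}, follows from the Gindikin--Karpelevich formula on the spherical vector.

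I would first establish the cocycle relation (2). In the region of absolute convergence, a Fubini argument on the defining integrals (or, alternatively, globalization combined with \eqref{eq:M_equation} and density of factorizable vectors) yields
\[ M_{w_1 \pi_v}(w_2, w_1 \lambda) \, M_{\pi_v}(w_1, \lambda) = M_{\pi_v}(w_2 w_1, \lambda), \]
which extends to meromorphic $\lambda$ by analytic continuation. On the scalar factor side, the set $\{\beta \in \Sigma_P : w_2 w_1 \beta < 0\}$ splits as the disjoint union of $\{\beta : w_1 \beta < 0\}$ and $w_1^{-1}\{\gamma \in \Sigma_Q : w_2 \gamma < 0\}$, and, under the permutation induced by $w_1$, the Rankin--Selberg $L$-factors appearing in each piece match exactly those of $n_{w_1 \pi_v}(w_2, w_1 \lambda)$ and $n_{\pi_v}(w_1, \lambda)$ respectively. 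This gives the multiplicativity of the scalar factors, and dividing yields (2) for $N_{\pi_v}$.

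The holomorphy property (1) is the substantive content. Using (2), one reduces to the case where $w$ is a simple reflection swapping two adjacent blocks $\GL_{n_i}$ and $\GL_{n_j}$ of $M_P$, so the question becomes rank-one and concerns an operator inside $\GL_{n_i+n_j}$ acting on $\pi_{v,i} \boxtimes \pi_{v,j}$. Since $\pi_v$ is unitary, so are the factors $\pi_{v,i}$ and $\pi_{v,j}$, and the Tadi\'c/Vogan classification of the unitary dual of $\GL_k(F_v)$ expresses each one as an irreducible full parabolic induction of a product of essentially Speh representations $\Speh(\delta, d)\Val{\det}^{s}$ with $\delta$ square-integrable and $\Val{s}<1/2$. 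A further application of (2) reduces matters to the case where $\pi_{v,i}$ and $\pi_{v,j}$ are themselves essentially Speh.

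For such a pair, the statement is exactly \cite[Proposition~I.10]{MW89}: each Speh representation is obtained as the image of a residue along the affine hyperplane \eqref{eq:nu_pi_defi}, and the normalized rank-one operator between two Speh representations is realized as a corresponding quotient of normalized operators at the tempered level; the holomorphy of the latter on $\langle \Re(\lambda), \alpha^\vee \rangle \geq 0$ is Shahidi's theorem, which appears as one of the defining properties of Arthur's normalization in \cite[Theorem~2.1]{Art89}. The rationality assertion in (1) follows from the explicit form of this residue computation, together with the fact that $M_{\pi_v}$ and $n_{\pi_v}$ are each rational in the relevant variables. The main obstacle is to verify that the residual normalization on Speh representations is genuinely compatible with the tempered one, so that no extraneous pole is introduced when passing from the standard module to its Langlands quotient; this compatibility is precisely what \cite[Proposition~I.10]{MW89} establishes by tracking the zeros of the scalar factors at the Speh residual points.
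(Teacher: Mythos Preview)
The paper does not actually prove this theorem: it states it as a known result that ``follow[s] from \cite[Theorem~2.1]{Art89} and \cite[Proposition~I.10]{MW89}'' and later (in \S\ref{subsec:local_operator}) records the precise local definition and invokes \cite{KS} for the cocycle relation. Your sketch is essentially a correct unpacking of exactly those references, with the same reduction to rank one and then to essentially Speh inducing data via the Tadi\'c--Vogan classification.

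One imprecision worth flagging: your Fubini argument does \emph{not} give the exact cocycle $M_{w_1\pi_v}(w_2,w_1\lambda)M_{\pi_v}(w_1,\lambda)=M_{\pi_v}(w_2w_1,\lambda)$ for the unnormalized operators in general, because $\dot{w_2}\dot{w_1}$ need not equal $\dot{w_2w_1}$. The discrepancy is a scalar cocycle that is absorbed by the $\varepsilon$-factors in the normalizing constant; this is why the paper's local $n_\tau(\alpha,s)$ in \eqref{eq:L_formula} carries an $\varepsilon$-factor and why the specific choice of representatives from \cite{KS} is invoked. Your multiplicativity argument for the scalar factors is fine once that $\varepsilon$-contribution is included, but as written the unnormalized step is slightly too quick.
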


\subsection{Constant terms of discrete automorphic forms}

Throughout all this section, we fix a standard parabolic subgroup $P$ of $\GL_n$ and $\pi \in \Pi_\disc(M_P)$. We have $P_\pi$ and $\sigma_\pi \in \Pi_{\cusp}(M_{P_\pi})$ as in \S\ref{subsubsec:residual_blocks}. 

\subsubsection{Regularized intertwining operator}
\label{subsubsec:regularized_int}
For any $\phi \in \cA_{P_\pi,\sigma_\pi}(\GL_n)$ consider the regularized operator 
\begin{equation}
\label{eq:reg_operator}
    M^*(w_\pi^*,\lambda) \phi:= L_{\pi,\mathrm{res}}(\lambda)M(w_\pi^*,\lambda) \phi.
\end{equation}
Note that for every $\alpha \in \Sigma_{P_\pi}$ such that $w_\pi^* \alpha <0$ we actually have $\alpha \in \Sigma_{P_\pi}^P$. In particular, for any $\lambda \in \fa_{P,\cc}^*$ we have for such $\alpha$ the lower bound $\langle \Re(-\nu_\pi+\lambda),\alpha^\vee \rangle=\langle \Re(-\nu_\pi),\alpha^\vee \rangle >0$. It follows from the properties of global $L$-functions recalled in \S\ref{subsubsec:scalar_factor} that there exists a constant $c_\pi$ such that for any $\lambda \in \fa_{P,\cc}^*$ we have
\begin{equation}
\label{eq:regular_is_local}
    M^*(w_\pi^*,-\nu_\pi+\lambda)\phi=c_\pi N_{\sigma_\pi}(w_\pi^*,-\nu_\pi+\lambda)\phi.
\end{equation}
By Theorem~\ref{thm:N}, we conclude that $M^*(w_\pi^*,-\nu_\pi+\lambda)\phi$ is regular on $\fa_{P,\cc}^*$.

\subsubsection{Computation of the constant terms}
We use the notation of \S\ref{subsubsec:residual_blocks}. Let $P_\pi \subset Q \subset P$. For each $1 \leq i \leq m$ there exist integers $d_{i,1}, \hdots, d_{i,m_i}$ such that $\sum_{j=1}^{m_i} d_{i,j}=d_i$ and $M_Q=\prod_{i=1}^m \prod_{j=1}^{m_i} \GL_{r_i d_{i,j}}$. For each pair $(i,j)$, set $n_{i,j}=r_i d_{i,j}$ and $P_{i,j}=P_{\pi_i} \cap \GL_{n_{i,j}}$. Let $\pi_{i,j}$ be the discrete representation of $\GL_{n_{i,j}}$ spanned by the residues of Eisenstein series built from $\phi_{i,j} \in \cA_{P_{i,j},\sigma_i^{\boxtimes d_{i,j}}}(\GL_{n_{i,j}})$. Set $\pi_Q=\boxtimes_i \boxtimes_j \pi_{i,j}$ which is a discrete representation of $M_Q$. 

\begin{lem}
\label{lem:contant_term}
    For every $\varphi \in \cA_{P,\pi}(\GL_n)$ we have $\varphi_{Q,-\nu_Q} \in \cA_{Q,\pi_Q}(\GL_n)$. In particular, $\varphi_{Q,-\nu_Q}$ is residual unless $Q=P_\pi$. Moreover, in that case, assume that $\varphi=E^{P,*}(\phi,-\nu_\pi)$ for $\phi \in \cA_{P_\pi,\sigma_\pi}(\GL_n)$. Then we have
    \begin{equation}
    \label{eq:constant_P_pi}
        \varphi_{P_\pi}=M^*(w_\pi^*,-\nu_{\pi})\phi.
    \end{equation}
\end{lem}

\begin{proof}
    This property only depends on the restriction of $\varphi_{Q,-\nu_Q}$ to $M_Q$, so that it is enough to prove it if $P=\GL_n$. Then the special case \eqref{eq:constant_P_pi} is proved in \cite[Proposition~2.3]{Jac}, and the general case follows by a similar argument. We refer the reader to \cite[Lemma~9.4.2.1]{BoiPhD} where we carefully write the details.
\end{proof}

\begin{cor}
\label{cor:same_quotient}
    We have
    \begin{equation*}
        \Ker E^{P,*}(\cdot,-\nu_\pi)=\Ker M^*(w_\pi^*,-\nu_\pi) = \Ker N_{\sigma_\pi}(w_\pi^*,-\nu_\pi),
    \end{equation*}
    where all three maps are defined on $\cA_{P_\pi,\sigma_\pi,-\nu_\pi}(\GL_n)$.
\end{cor}

\begin{proof}
    For the first equality, the inclusion $\subset$ comes for \eqref{eq:constant_P_pi}. For the converse, let $\phi \in \cA_{P_\pi,\sigma_\pi,-\nu_\pi}(\GL_n)$. By Lemma~\ref{lem:contant_term}, the only cuspidal component of $E^{P,*}(\phi,-\nu_\pi)$ that is not automatically zero is $E^{P,*}_{P_\pi}(\phi,-\nu_\pi)=M^*(w_\pi^*,-\nu_\pi)\phi$. Therefore, if we assume $\phi \in \Ker M^*(w_\pi^*,-\nu_\pi)$ then all the cuspidal components of $E^{P,*}(\phi,-\nu_\pi)$ are zero, and thus $E^{P,*}(\phi,-\nu_\pi)$ itself is zero by Lemma~\ref{lem:vanishing_cusp_comp}. The last equality is a direct consequence of \eqref{eq:regular_is_local}. 
\end{proof}

\section{Ichino--Yamana--Zydor regularized periods}
\label{sec:IYZ_periods}
In this section, $G$ is $\GL_n \times \GL_{n+1}$. We embed $\GL_n$ in $\GL_{n+1}$ by
\begin{equation}
\label{eq:n_embedding}
    g \mapsto  \begin{pmatrix} g & \\ & 1 \end{pmatrix}.
\end{equation}
Let $H \simeq \GL_n$ be the diagonal subgroup in $G$. The goal of this section is to study the regularized period $\cP$ from \cite{IY}, and to connect it to the construction of \cite{IY}.

\subsection{Rankin--Selberg parabolic subgroups}
\label{sec:RS_subgroup}

We start by defining a subset $\cF_{\RS}$ of the set of semi-standard parabolic subgroups of $G$ which appears in the definition of the regularized period $\cP$.

\subsubsection{Preliminary notation}

Let $P_0 \subset G$ be the product of the subgroups of upper triangular matrices in $\GL_n$ and $\GL_{n+1}$, and $T_0 \subset G$ to be the product of the standard diagonal tori. If $J$ is a subgroup of $G$, we write $J=J_n \times J_{n+1}$. Similarly we have $\fa_0=\fa_{0,n} \oplus \fa_{0,n+1}$. 

All the constructions done relatively to $H$ will be decorated by a subscript ${}_H$. In particular, a subgroup of $H$ will typically be denoted $J_H$, and if $J$ is a subgroup of $G$ we set $J_H=J \cap H$. The pair $(T_{0,H},P_{0,H})$ is standard in $H$. Set $\fa_{0,H}=\fa_{T_{0,H}}$. It embeds into $\fa_0$. We will often identify the group $\GL_n$ either with $H$, either with its two copies in $G$ from the left and right coordinates, using the embedding \eqref{eq:n_embedding} for the latter.

\subsubsection{Definition of the set of Rankin--Selberg parabolic subgroups}
\label{subsubsec:RS_defi}
We define $\cF_{\RS}$ the set of \emph{Rankin--Selberg parabolic subgroups} of $G$. This is the set of semi-standard parabolic subgroups $P=P_n \times P_{n+1}$ of $G$ such that $P_n$ is standard and, with respect to the embedding \eqref{eq:n_embedding}, $P_n=P_{n+1} \cap \GL_n$. In particular, if $P \in \cF_{\RS}$ then $P_H$ is a standard parabolic subgroup of $H$ isomorphic to $P_n$. Conversely, if $P_H$ is a standard parabolic subgroup of $H$, set
\begin{equation*}
    \cP_{\RS}(P_H)=\{ Q \in \cF_{\RS} \; | \; Q_H=P_H \}.
\end{equation*}
Then we have
\begin{equation}
\label{eq:F_decompo}
    \cF_{\RS}= \bigsqcup_{P_H \subset H \; \mathrm{standard}} \cP_{\RS}(P_H).
\end{equation}

Recall that in \S\ref{subsubsec:blocks} we have associated to any standard parabolic subgroup $P$ of $\GL_n$ or $\GL_{n+1}$ a tuple $\underline{n}(P)$. Let $P_H$ be a standard parabolic subgroup of $H$. Write $\underline{n}(P_H)=(n_1,\hdots,n_m)$ with $\sum n_i=n$. Let $\cP_n^{n+1}(P_H)$ be the set of couples $(P^{\std}_{n+1},i_0)$ where $P^{\std}_{n+1}$ is a standard parabolic subgroup of $\GL_{n+1}$ and $i_0$ is an integer such that one of the two following alternatives is satisfied:
\begin{enumerate}
    \item $\underline{n}(P^{\std}_{n+1})=(n_1, \hdots, n_{i_0-1}, n_{i_0}+1,n_{i_0+1}, \hdots ,n_m)$ and $1 \leq i_0 \leq m$;
    \item $\underline{n}(P^{\std}_{n+1})=(n_1, \hdots, n_{i_0-1}, 1,n_{i_0}, \hdots ,n_m)$ and $1 \leq i_0 \leq m+1$.
\end{enumerate}
In the first case, set $N=\sum_{i=1}^{i_0} n_i$, and in the second set $N=\sum_{i=1}^{i_0-1} n_i$. Let $w(P^{\std}_{n+1},i_0) \in \fS_{n+1}$ be the cycle 
\begin{equation}
\label{eq:w_defi}
    w(P^{\std}_{n+1},i_0)=( N+2 \quad \hdots \quad n \quad n+1 \quad N+1).
\end{equation}
We identify it with an element in $W_{n+1}$ the Weyl group of $\GL_{n+1}$.

\begin{lem}
\label{lem:messy_iso}
    There is a bijection 
    \begin{equation}
    \label{eq:messy_iso}
        (P^{\std}_{n+1},i_0) \in \cP_n^{n+1}(P_H) \mapsto P_H \times (w(P^{\std}_{n+1},i_0) .P^{\std}_{n+1})  \in \cP_{\RS}(P_H).
    \end{equation}
\end{lem}

\begin{proof}
    Let $(P^{\std}_{n+1},i_0) \in \cP_n^{n+1}(P_H)$. Then $w_{P^{\std}_{n+1},i_0} .P^{\std}_{n+1} \cap \GL_n$ is a semi-standard parabolic subgroup of $\GL_n$ which has same semi-standard Levi factor as $P_H$. It is standard because $w(P^{\std}_{n+1},i_0)$ preserve the order of the blocks. It is therefore equal to $P_H$. Thus \eqref{eq:messy_iso} is well-defined. 

    We build its inverse. Let $P_n \times P_{n+1} \in \cP_{\RS}(P_H)$. We have $P_n=P_H$. There are two cases for $P_{n+1}$.
    \begin{enumerate}
            \item Either $M_{P_{n+1}}=\prod_{i=1}^m \GL_{n'_i}$ for some $n'_i$ which are all equal to $n_i$ except a $n'_{i_0}$ which is $n_{i_0}+1$. We define $\underline{n}(P^{\std}_{n+1})$ as in case $1$ and we have $w(P^{\std}_{n+1},i_0)^{-1} .P_{n+1}=P^{\std}_{n+1}$.
        \item Either $M_{P_{n+1}}=\prod_{i=1}^m \GL_{n_i} \times \GL_1$. In that case, there is a unique $w$ which acts by blocks on $M_{P_{n+1}}$ and such that $w^{-1}.P_{n+1} =P^{\std}_{n+1}$ is standard. This determines $\underline{n}(P^{\std}_{n+1})$ and $i_0$, and we have $w(P^{\std}_{n+1},i_0)=w$.
    \end{enumerate}
    The two constructions are inverse of each other and this concludes the proof.
\end{proof}

\begin{cor}
\label{cor:param_RS}
    We have a bijection
    \begin{equation*}
        \cF_{\RS}\simeq \left\{ (P_{n+1}^{\std},i_0) \; \middle| \; \begin{array}{l}
            P^{\std}_{n+1} \subset \GL_{n+1} \; \text{standard}, \\ M_{P^{\std}_{n+1}}=\prod_{i=1}^m \GL_{n_i}, \\
             1 \leq i_0 \leq m.
        \end{array} \right\}=\bigsqcup_{P_H\subset H \; \mathrm{standard}} \cP_n^{n+1}(P_H).
    \end{equation*}
\end{cor}

\begin{proof}
    By Lemma~\ref{lem:messy_iso}, it is enough to prove the last equality. But this follows by construction of the couples $(P^{\std}_{n+1},i_0)$.
\end{proof}

\subsubsection{Standardization}

Let $P \in \cF_{\RS}$. Let $(P_{n+1}^{\std},i_P)$ be the inverse image of $P$ under the isomorphism of Corollary~\ref{cor:param_RS}. Let $w_P^\std=w(P^{\std}_{n+1},i_P)$ be the element defined in \eqref{eq:w_defi}. We have
\begin{equation}
\label{eq:standardized_P}
    P_{n+1}=w_P^\std.P^{\std}_{n+1}.
\end{equation}
Because $P^{\std}_{n+1}$ is standard, we identify  $\fa_{P^{\std}_{n+1}}$ with $\rr^m$ for some $m \geq 1$ by using the same basis as in \S\ref{subsubsec:blocks}. By composing with $w_{P}^\std$, we obtain $\fa_{P_{n+1}} \simeq \rr^{m}$. In this choice of coordinates we have 
\begin{equation*}
    \fa_{P_{n+1}}^+=\left\{ (H_1, \hdots, H_{m}) \in \rr^{m} \; | \; H_1/n_1 > \hdots > H_{m}/n_m \right\}.
\end{equation*}
In order to keep the notation consistent, we will also identify $\fa_{P_n}$ with a subspace of $\rr^m$. It is $\rr^m$ if $\underline{n}(P^{\std}_{n+1})$ is of type 1, and it is the subspace $H_{i_P}=0$ if it is of type 2. 

Set $P^{\std}=P_n \times P^{\std}_{n+1}$, which is a standard parabolic subgroup of $G$. This is the \emph{standardization} of $P$. Write $(n_i)=\underline{n}(P^{\std}_{n+1})$. We may decompose the standard Levi factor $M_P^{\std}$ of $P^\std$ as
\begin{equation}
\label{eq:standard_decompo}
    M_{P}^{\std}=M_{P,n}^{\std}\times M_{P,n+1}^{\std}=\left( \bfM_{P,+} \times \cM_{P,n} \times \bfM_{P,-} \right) \times \left( \bfM_{P,+} \times \cM^{\std}_{P,n+1} \times \bfM^{\std}_{P,-} \right),
\end{equation}
where
\begin{equation}
\label{eq:ss_decompo}
   \bfM_{P,+} =   
            \prod_{i < i_P} \GL_{n_i}, \quad \bfM_{P,-}\simeq \bfM^{\std}_{P,-} =   
            \prod_{i > i_P} \GL_{n_i}, \quad \cM_{P,n} =  
            \GL_{n_{i_P}-1}, \quad \cM^{\std}_{P,n+1} =   
            \GL_{n_{i_P}}.
\end{equation}
We add a ${}^{\std}$ on $\bfM^{\std}_{P,-}$ to emphasize that, although they are isomorphic, the groups $\bfM_{P,-}$ and $\bfM^{\std}_{P,-}$ are not identified by the embedding \eqref{eq:n_embedding}. By composing with $w_P^\std$, we get a decomposition
\begin{equation}
\label{eq:M_P_levi}
     M_{P} \simeq \left( \bfM_{P,+} \times \cM_{P,n} \times \bfM_{P,-} \right) \times \left( \bfM_{P,+} \times \cM_{P,n+1} \times \bfM_{P,-} \right).
\end{equation}
The two copies of $\bfM_{P,-}$ in \eqref{eq:M_P_levi} are now identified by the embedding \eqref{eq:n_embedding}. The groups $\cM^{\std}_{P,n+1}$ and $\cM_{P,n+1}$ are isomorphic but in general embedded in two different ways in $\GL_{n+1}$. Set
\begin{equation*}
    \bfM_P=\bfM_{P,+} \times \bfM_{P,-}, \quad \bfM_P^2=\bfM_P \times \bfM_P, \quad \cM_P=\cM_{P,n} \times \cM_{P,n+1}.
\end{equation*}
The restriction of the diagonal embedding $H \subset G$ gives $\cM_{P,H} \subset \cM_P$ and we have $\cM_{P,H} \simeq \cM_{P,n}$. Note that $M_{P_H}=M_{P,H}=\bfM_P \times \cM_{P,H}$. The group $\cM_P$ is isomorphic to $\GL_{n_{i_P}-1} \times \GL_{n_{i_P}}$, and we can define embedding of $\GL_{n_{i_P}-1}$ in $\cM_P$ as in \eqref{eq:n_embedding}. This is compatible with the inclusion $H \subset G$ in the sense that $\cM_{P,H}=\cM_P \cap \GL_{n_{i_P}-1}$. In particular, we have $\cF^{\cM_P}_{\RS}$ the set of Rankin--Selberg parabolic subgroups of $\cM_P$. We describe the relative version of the set $\cF_{\RS}$. 

\begin{lem}
\label{lem:relative_bijection}
    Let $Q \in \cF_{\RS}$. We have a bijection
    \begin{equation}
    \label{eq:relative_RS}
        P \in \left\{ P \in \cF_{\RS} \; | \; P \subset Q \right\} \mapsto (P \cap \bfM_Q,P \cap \cM_Q) \in \{ \text{standard parabolic subgroups of } \bfM_Q \} \times \cF^{\cM_Q}_{\RS}.
    \end{equation}
    Moreover, for any $P \in \cF_{\RS}$ with $P \subset Q$ we have
    \begin{equation*}
        \cM_{P} \subset \cM_Q, \quad w_P^\std=w_{\cP}^{\cM_Q,\std} w_Q^\std,
    \end{equation*}
    where $w_{\cP}^{\cM_Q,\std}$ is $w_{P \cap \cM_Q}^\std$ seen as an element in $G$ via the embedding $\cM_Q \subset G$.
\end{lem}

\begin{proof}
    This follows from the classification of Corollary~\ref{cor:param_RS}, noting that if $P \in \cF_{\RS}$ with $P \subset Q$, then $P_H \subset Q_H$ and therefore $\underline{n}(P_H)$ is a subdivision of $\underline{n}(Q_H)$.
\end{proof}

\subsubsection{Connection to \cite{Zydor}} \label{subsubsec:RS_coord}

We now connect the Rankin--Selberg parabolic subgroups to the constructions of \cite[Sections~3.1 and 3.2]{Zydor}. For any semi-standard parabolic subgroup $P$ of $G$, set 
\begin{equation*}
    \fz_P=\fa_P \cap \fa_{0,H}, \quad \text{ and } \quad \fz_P^+=\fa_P^+ \cap \fa_{0,H}.
\end{equation*}
If $P \in \cF_{\RS}$, write $M_{P,n+1}^{\std}=\prod_{i=1}^{m} \GL_{n_i}$. We have the descriptions
\begin{equation}
\label{eq:zp_descri}
    \fz_P=\left\{ H=(H_n,H_{n+1}) \in \fa_P \; \Big{|} \; 
    \begin{array}{l}
        H_{n,i}=H_{n+1,i} \text{ if } i \neq i_P, \\ 
        H_{n,i_P}=H_{n+1,i_P}=0 
    \end{array} 
    \right\}=\fa_{\bfM_P} =\oplus_{i \neq i_P} \fa_{\GL_{n_i}},
\end{equation}
and 
\begin{equation}
\label{eq:zp+_descri}
    \fz_P^+=\fa_P^+ \cap \fa_{P_H}^+=\left\{ H \in \fz_P \; | \; H_1/n_1 > \hdots > H_{i_P-1}/{n_{i_P-1}} > 0 > H_{i_P+1}/{n_{i_P+1}} > \hdots > H_{m}/n_m \right\}
\end{equation}
where we write $H_i$ for $H_{n,i}=H_{n+1,i}$.

\begin{lem}
    Let $P_H$ be a standard parabolic subgroup of $H$. We have
    \begin{equation}
    \label{eq:PG_defi}
        \cP_{\RS}(P_H)=\left\{ Q \subset G \; \text{semi-standard} \; | \; \fa_{Q}^+ \cap \fa_{P_H}^+ \neq \emptyset \right\}.
    \end{equation}
\end{lem}

\begin{rem}
    The set in the RHS of \eqref{eq:PG_defi} is defined in \cite[23]{Zydor} by asking that $Q$ the split center of the centralizer of $T_{0,H}$ in $G$ instead of $T_0$. As $Z_{\GL_{n+1}} T_{0,n}=T_{0,n+1}$, this is $T_0$.
\end{rem}

\begin{proof}
    Let $Q$ belong to the RHS of \eqref{eq:PG_defi}. By \cite[Proposition~3.1]{Zydor}, we have $Q_H=P_H$, $M_{Q,H}=M_{P,H}$ and $N_{Q,H}=N_{P,H}$. As subgroups of $\GL_n$, this implies that $P_H \subset Q_n$, $M_{P,H} \subset M_{Q,n}$ and $N_{P,H} \subset N_{Q,n}$, so that $P_H=Q_n$. By playing the same game in the $\GL_{n+1}$-coordinate, we get $P_H=Q_{n+1} \cap \GL_n$ and therefore $Q \in \cP_{\RS}(P_H)$. Conversely, we have described $\fa_P^+ \cap \fa_{P_H}^+$ in \eqref{eq:zp+_descri} in the case $P \in \cP_{\RS}(P_H)$ and it is non-empty.
\end{proof}

Let $P \in \cF_{\RS}$. We have a basis $(e_{n+1,i}^*)_i$ of $\fa_{P_{n+1}}^*$ (see \S\ref{subsubsec:blocks}). If $\lambda \in \fa_P^*$, denote by $\lambda_H$ its restriction to $\fz_P$. Set $e_i^*=( e_{n+1,i}^*)_H$. This is zero if $i = i_P$. Then the family $(e_i^*)_{i \neq i_P}$ is a basis of $\fz_P^*$ the dual space of $\fz_P$. Let $(e_i)_i$ be the basis of $\fz_P$ dual to $(e_i^*)_{i \neq i_P}$ (with the convention $e_{i_P}=0$).

Denote by $\Delta_{P_{n+1}}$ the set $w_P^\std \Delta_{P^{\std}_{n+1}} \subset \fa_{P_{n+1}}^*$, and by $\Delta_{P,H}$ its restriction to $\fz_P$. Note that $\Delta_{P,H}$ is a basis of $\fz_P^*$. Let $\hat{\Delta}_{P,H}^\vee$ be the basis of $\fz_P$ dual to $\Delta_{P,H}$. In coordinates, we have
\begin{equation}
\label{eq:root_defi}
    \Delta_{P,H}=\left\{ \frac{e_{i}^*}{n_i}-\frac{e_{i+1}^*}{n_{i+1}} \; \middle| \; 1 \leq i \leq m-1 \right\},
\end{equation}
where we recall that $e_{i_P}^*=0$, and 
\begin{equation}
\label{eq:coweight_defi}
    \hat{\Delta}_{P,H}^\vee=\left\{ \sum_{j=1}^i n_j e_j \; \middle| \; i < i_P \right\} \bigcup \left\{ -\sum_{j=i}^{m} n_j e_j \; \middle| \; i > i_P \right\}.
\end{equation}
Then 
\begin{equation*}
    \fz_P^+ = \bigoplus_{\varpi^\vee \in \hat{\Delta}_{P,H}^\vee} \rr_{>0} \varpi^\vee.
\end{equation*}
Recall that in \S\ref{subsubsec:blocks} we have given $\fa_P$ an inner product $(\cdot,\cdot)$. By restricting, we obtain an inner product on $\fz_P$. Let $\Delta_{P,H}^\vee$ be the basis of $\fz_P$ dual to $\hat{\Delta}_{P,H}^\vee$. Then we have
\begin{equation}
\label{eq:coroot_defi}
    \Delta_{P,H}^\vee=\left\{ e_{i}-e_{i+1} \; \middle| \; 1 \leq i \leq m-1 \right\},
\end{equation}
where again $e_{i_P}=0$. The inconsistency in the normalizations between \eqref{eq:root_defi} and \eqref{eq:coroot_defi} is due to the fact that $(e_i,e_i)=n_i^{-1}$. Set
\begin{equation*}
    \fz_P^{*,+}=\left\{ \lambda \in \fa_P^* \; \middle | \; \langle \lambda, \alpha^\vee_H \rangle >0, \; \forall \alpha^\vee_H \in \Delta_{P,H}^\vee \right\}.
\end{equation*}
Note that $\fz_P^+ \subset \bigoplus_{\alpha^\vee_H \in \Delta_{P,H}^\vee} \rr_{\geq 0} \alpha^\vee_H$, so that if $\lambda \in \fz_P^{*,+}$ we have $\langle \lambda,H \rangle > 0$ for every $H \in \fz_P^+$.

\subsubsection{Relative constructions} \label{subsubsec:relative_case}
Let $P \subset Q \in \cF_{\RS}$. Let $\fz_P^Q$ be the orthogonal of $\fz_Q$ in $\fz_P$. Note that this is consistent with our previous notation as $\fz_G=\{0\}$. If $T \in \fz_{P}$, let $T_Q$ be its projection on $\fz_Q$ and $T_P^Q$ be its projection on $\fz_{P}^Q$, both with respect to $\fz_P=\fz_{P}^Q \oplus \fz_Q$. This applies in particular to $T \in \fa_{0,H}=\fz_{P_0}$ and we simply write $T^Q$ for $T_{P_0}^Q$.

Let $\Delta_{P_{n+1}}^{Q_{n+1}}$ be the set $w_P^\std \Delta_{P^{\std}_{n+1}}^{Q^{\std}_{n+1}} \subset \fa_{P_{n+1}}^*$ and let $\Delta_{P,H}^Q$ be its restriction to $\fz_P^Q$. Then $\Delta_{P,H}^Q$ is a basis $\fz_P^{Q,*}$. Let $\hat{\Delta}_{P,H}^{Q,\vee}$ be the basis of $\fz_P^Q$ dual to $\Delta_{P,H}^Q$, and let $\Delta_{P,H}^{Q,\vee}$ be the basis of $\fz_P^Q$ dual to $\hat{\Delta}_{P,H}^{Q,\vee}$ under the restriction of the inner product. These sets are easy to describe. Indeed, using Lemma~\ref{lem:relative_bijection} we may write
\begin{equation*}
    \fz_P^Q=\fa_{\bfP}^{\bfM_Q} \oplus^\perp \fz_{\cP}^{\cM_Q},
\end{equation*}
where we set $\bfP=P \cap \bfM_Q$ (seen as a subgroup of $H)$ and $\cP=P \cap \cM_Q$. It follows that we have decompositions $\Delta_{P,H}^Q=\Delta_{\bfP,H}^{\bfM_Q} \sqcup \Delta_{\cP,H}^{\cM_Q}$, $\hat{\Delta}_{P,H}^{Q,\vee}=\hat{\Delta}_{\bfP,H}^{\bfM_Q,\vee} \sqcup \hat{\Delta}_{\cP,H}^{\cM_Q,\vee}$ and $\Delta_{P,H}^{Q,\vee}=\Delta_{\bfP,H}^{\bfM_Q,\vee} \sqcup \Delta_{\cP,H}^{\cM_Q,\vee}$ where $\Delta_{\bfP,H}^{\bfM_Q}$, $\hat{\Delta}_{\bfP,H}^{\bfM_Q,\vee}$ and $\Delta_{\bfP,H}^{\bfM_Q,\vee} $ can be respectively identified with the usual sets of roots, coweights and coroots of the standard parabolic subgroup $\bfP$ of $\bfM_Q$, and $\Delta_{\cP,H}^{\cM_Q}$, $\hat{\Delta}_{\cP,H}^{\cM_Q,\vee}$ and $\Delta_{\cP,H}^{\cM_Q,\vee}$ are described by \eqref{eq:root_defi}, \eqref{eq:coweight_defi} and \eqref{eq:coroot_defi} respectively. 

\subsubsection{The element $\underline{\rho}_P$} \label{subsubsec:rho_defi} Let $P \in \cF_{\RS}$. Set
\begin{equation*}
    \underline{\rho}_P=(\rho_P-2 \rho_{P_H})_{| \fz_{P}} \in \fz_P^*.
\end{equation*}
In coordinates, we have $(\underline{\rho}_P)_i=\frac{1}{2}$ if $i<i_P$, and $(\underline{\rho}_P)_i=-\frac{1}{2}$ if $i>i_P$.

Note that we have a surjection $\fa_{P^\std}^* \to \fz_P^*$ by composing with $w_P^{\std,-1}$ and then restricting. We now choose a lift $\underline{\rho}_{P}^\std$ of $\underline{\rho}_P$. Under the decomposition of $M_{P}^\std$ given in \eqref{eq:standard_decompo} and with the coordinates of \S\ref{subsubsec:blocks}, we set
\begin{equation}
\label{eq:rho_P'}
    \underline{\rho}_{P}^\std = \left( (1/4,\hdots,1/4,0,-1/4,\hdots,-1/4),(1/4,\hdots,1/4,0,-1/4,\hdots,-1/4) \right),
\end{equation}
where the zeros correspond to the blocks $\cM_{P,n}$ and $\cM_{P,n+1}^\std$. Note that $\underline{\rho}_{P}^\std \in \overline{\fa_{P^\std}^{*,+}}$.

\subsubsection{Volumes of fundamental domains} \label{subsubsec:Euclidean}
Let $P \in \cF_{\RS}$. Thanks to the isomorphism $\fz_P \simeq \oplus_{i \neq i_P} \fa_{\GL_{n_i}}$ from \eqref{eq:zp_descri}, we give $\fz_P$ the product of the Haar measures described in \S\ref{subsubsec:first_measure}. If $P \subset Q$, we give $\fz_P^Q=\fz_P / \fz_Q$ the quotient measure. If $B$ is a basis of a lattice $\zz(B)$ in $\fz_P^Q$, let $\vol(\fz_P^Q/ \zz(B))$ be its covolume. If $Q=G$, we have 
\begin{equation}
\label{eq:basis_volume}
    \vol(\fz_P/\zz((e_i)))=1, \quad \text{and} \quad \vol(\fz_P/\zz(\hat{\Delta}_{P,H}^\vee))=\prod_{i \neq i_P} n_i.
\end{equation}

\begin{rem}
    In \cite[Section~4.3]{Zydor}, $\fz_P$ is given the measure associated to the Euclidean structure induced by $(\cdot,\cdot)$. One can check that it differs from our measure by a factor of $\prod_{i \neq i_P} n_i$. This normalization cancels out with the choice of measure we make in \S\ref{subsubsec:global_measure}. 
\end{rem}

\subsection[Exponential polynomials]{Exponential polynomials and Fourier transforms on cones}
\label{sec:pol_exp_Fourier}

Let $V$ be a finite dimensional vector space, let $\Omega \subset V$ be an open subset. By an exponential polynomial on $\Omega$, we mean a function $f : \Omega \to \cc$ which belongs to the vector space generated by the maps $v \mapsto P(v) \exp(\langle \lambda, v \rangle)$ where $P \in \cc[V]$ and $\lambda \in V^*$. Any exponential polynomial can be uniquely written as $f(v)=\sum_{\lambda \in V^*} P_\lambda(v) \exp(\langle \lambda,v \rangle)$ for a unique map $\lambda \mapsto P_\lambda$ with finite support. The polynomial $P_0$ is called the polynomial part of $f$. 

Let $P \subset Q \in \cF_{\RS}$. Set
\begin{equation*}
    \varepsilon_{P}^Q=(-1)^{\dim \fz_P^Q}=(-1)^{\dim \fa_{P_{n+1}}^{Q_{n+1}}}.
\end{equation*}
For $\lambda \in \fa_0^*$, set
\begin{equation*}
    \hat{\theta}_P^Q(\lambda)=\vol(\fz_P^Q/ \zz(\hat{\Delta}_{P,H}^{Q,\vee}))^{-1} \prod_{\varpi^\vee \in \hat{\Delta}_{P,H}^{Q,\vee}} \langle \lambda, \varpi^\vee \rangle,
\end{equation*}
and
\begin{equation*}
    \theta_P^Q(\lambda)=\vol(\fz_P^Q/ \zz(\Delta_{P,H}^{Q,\vee}))^{-1} \prod_{\alpha^\vee_H \in \Delta_{P,H}^{Q,\vee}} \langle \lambda, \alpha^\vee_H \rangle.
\end{equation*}
These polynomials are not identically zero on $\fa_{P,\cc}^*$. In the special case where $P=P_0$, we omit the subscript $P$. We set 
\begin{equation}
    \label{eq:reg_space}
    \fa_{0,\cc}^{*,Q-\mathrm{reg}}=\bigcap_{\substack{ P \in \cF_{\RS} \\ P \subset Q}}\left\{ \lambda \in \fa_{0,\cc}^* \; \middle| \; \hat{\theta}_P^Q(\lambda+\underline{\rho}_P) \neq 0 \right\}.
\end{equation}
If $R$ is a semi-standard parabolic subgroup of $G$, we set $\fa_{R,\cc}^{*,Q-\mathrm{reg}}=\fa_{R,\cc}^* \cap \fa_{0,\cc}^{*,Q-\mathrm{reg}}$.

\begin{rem}
    As noted in \cite[Section~4.5]{Zydor}, it is enough to take the intersection in \eqref{eq:reg_space} along maximal proper Rankin--Selberg parabolic subgroups $P$ of $Q$. In that case, the condition amounts to asking that $\lambda+\underline{\rho}_P$ is non-zero when restricted to $\fz_P^Q$.
\end{rem}

We define as in \cite[7]{Zydor}
\begin{align*}
   & A(\overline{\fz_Q^+},\overline{\fz_P^+})=\left\{ r(H-Z) \; | \; H \in \overline{\fz_P^+}, \; r>0 \right\}, \\
    & A(\overline{\fz_Q^+},\overline{\fz_P^+})^{\vee}=\{ H \in \fz_P \; | \; (H,H') \geq 0, \; H' \in A(\overline{\fz_Q^+},\overline{\fz_P^+}) \},
\end{align*}
where $Z$ is any $Z \in \fz_Q^+$. We have
\begin{align}
    &A(\overline{\fz_Q^+},\overline{\fz_P^+})=\left\{ H \in \fz_P \; | \; \forall \alpha \in \Delta_{P,H}^Q, \; \langle \alpha,H \rangle \geq 0 \right\}=\bigoplus_{\varpi^\vee \in \hat{\Delta}_{P,H}^{Q,\vee}} \rr_{\geq 0} \varpi^\vee \bigoplus \fz_Q, \label{eq:A_defi} \\
    &A(\overline{\fz_Q^+},\overline{\fz_P^+})^{\vee}=\bigoplus_{\alpha_H^\vee \in \Delta_{P,H}^{Q,\vee}} \rr_{\geq 0} \alpha^\vee_H. \label{eq:A_vee_defi}
\end{align}
Let $\tau_P^Q$ be the characteristic function of $A(\overline{\fz_Q^+},\overline{\fz_P^+})$, and let $\hat{\tau}_P^Q$ be the characteristic function of $A(\overline{\fz_Q^+},\overline{\fz_P^+})^\vee$. These two functions are defined on $\fa_{0,H}$. Set
\begin{equation*}
    \Gamma_P^Q(H,T)=\sum_{\substack{ R \in \cF_{\RS} \\ P \subset R \subset Q}} \varepsilon_R^Q \tau_P^R(H)\hat{\tau}_R^Q(H-T), \quad H,T \in \fa_{0,H}.
\end{equation*}
We now compute the Fourier transforms of these functions.

\begin{lem}
\label{lem:Fourier_transform} 
\begin{enumerate}
    \item For every polynomial $q \in \cc[\fz_P]$, the Fourier transform 
    \begin{equation}
        \label{eq:q_FT}
        \mathrm{FT}_P^Q(q,\lambda):=\int_{\fz_P^Q} \tau_P^Q(H) q(H)\exp(\langle \lambda, H \rangle) dh,
    \end{equation}
    is absolutely convergent in the open subset $\cap_{\varpi^\vee \in \hat{\Delta}_{P,H}^{Q,\vee}} \{\langle \Re(\lambda), \varpi^\vee \rangle < 0\}$ and extends to a meromorphic function on $\{ \lambda \; | \; \hat{\theta}_P^Q(\lambda) \neq 0\}$. 
    
    \item For every $T \in \fa_{0,H}$ the map $H \mapsto \Gamma_P^Q(H,T)$ is compactly supported with support contained in the open ball $B(T/2,\norm{T}/2) \subset \fa_{0,H}$. Let $q \in \cc[\fz_P]$. For fixed $\lambda \in \fa_{P,\cc}^*$ the Fourier transform
    \begin{equation*}
        \mathrm{FT}_P^Q(\Gamma,T,q,\lambda):=\int_{\fz_P^Q} \Gamma_P^Q(H,T)q(H) \exp(\langle \lambda, H \rangle) dH
    \end{equation*}
    is an exponential polynomial in $T$, and for $\lambda \in \fa_{P,\cc}^{Q,*}$ such that $\hat{\theta}_P^Q(\lambda) \neq 0$ its purely polynomial term is constant equal to $\mathrm{FT}_P^Q(q,\lambda)$.
    \item If $q=1$, we have
    \begin{equation}
    \label{eq:tau_FT}
        \mathrm{FT}_P^Q(1,\lambda)=\varepsilon_P^Q \hat{\theta}_P^Q(\lambda)^{-1}.
    \end{equation}
    and for $\lambda \in \fa_{P,\cc}^{Q,*}$ in general position
     \begin{equation}
    \label{eq:Gamma_FT}
        \mathrm{FT}_P^Q(\Gamma,T,1,\lambda)= \sum_{\substack{ R \in \cF_{\RS} \\ P \subset R \subset Q}} \varepsilon_P^R \hat{\theta}_P^R(\lambda)^{-1} \theta_R^Q(\lambda)^{-1} \exp(\langle \lambda, T_R^Q \rangle).
    \end{equation}
\end{enumerate}
    
\end{lem}

\begin{proof}
    The first two points are contained in \cite[Section~1.4]{Zydor} and \cite[Corollary~2.6]{ZZ}. For the last, note that the set of $\lambda \in \fa_{P,\cc}^*$ such that, for all $P \subset R \subset Q$, all $\varpi^\vee \in \hat{\Delta}_{P,H}^{R,\vee}$, $\langle \Re(\lambda), \varpi^\vee \rangle <0 $ and all $\alpha^\vee_H \in \Delta_{R,H}^{Q,\vee}$, $\langle \Re(\lambda), \alpha^\vee_H \rangle <0$, is non-empty. Indeed, it contains the $\lambda$'s such that the restriction of $\Re(\lambda)$ to $\fz_P$ belongs to $-\fz_P^{*,+}$. For such $\lambda$, we see using \eqref{eq:A_defi} and \eqref{eq:A_vee_defi} that \eqref{eq:tau_FT} and \eqref{eq:Gamma_FT} hold. We conclude that they hold for $\lambda$ in general position by analytic continuation.
\end{proof}

\subsection{Regularized Rankin--Selberg periods à la Zydor}
\label{sec:reg_RS_periods_Z}

In this section, we fix $Q \in \cF_{\RS}$. We write $M^{\std}_{Q,n+1}=\prod_{i=1}^m \GL_{n_i}$. Let $T_{H,\mathrm{reg}} \in \fa_{0,H}$ be the element defined in \cite[Lemma~2.7]{Zydor}. We say that $T \in \fa_{0,H}$ is sufficiently positive if $T \in T_{H,\mathrm{reg}} + \fa_{0,H}^+$.  We now define the truncated period $\cP^T$ and its regularization $\cP$.

\subsubsection{Iwasawa decomposition and measures} \label{subsubsec:global_measure}
Set
\begin{equation*}
    M_{Q_H}(\bA)^{Q,1}=\left\{ m \in M_{Q_H}(\bA) \; \middle| \; \left(H_{Q_H}(m)\right)_Q=0 \right\}, \quad Z_Q^\infty=A_Q^\infty \cap A_{Q_H}^\infty.
\end{equation*}
The restriction of $H_{Q_H}$ to $Z_Q^\infty$ is an isomorphism with image $\fz_Q$. This gives $Z_Q^\infty$ a Haar measure. We have a direct product decomposition of commuting groups $M_{Q_H}(\bA)=Z_Q^{\infty} M_{Q_H}(\bA)^{Q,1}$. By the Iwasawa decomposition, there is a unique Haar measure on $M_{Q_H}(\bA)^{Q,1}$ such that for all $f \in C_c(H(\bA))$ we have
\begin{equation}
\label{eq:facto_measure}
    \int_{H(\bA)} f(h)dh=\int_{K_H} \int_{N_{Q_H}(\bA)} \int_{M_{Q_H}(\bA)^{Q,1}} \int_{Z_Q^\infty} \exp(\langle -2 \rho_{Q_H},H_{Q_H}(am) \rangle) f(nmak) da dm dn dk.
\end{equation}
We set
\begin{equation*}
    [M_{Q_H}]^{Q,1}=M_{Q_H}(F) \backslash M_{Q_H}(\bA)^{Q,1}
\end{equation*}
which is given the quotient by the counting measure. Note that we have
\begin{equation*}
    [M_{Q,H}]^{Q,1} \simeq \left(\prod_{i \neq i_Q} [\GL_{n_i}]^1\right) \times [\GL_{n_{i_Q}-1}]=[\bfM_{Q}]^1 \times [\cM_{Q,H}].
\end{equation*}
Then the measure $dh$ coincides the product of the ones on each $[\GL_{n_i}]^1$ and on $[\GL_{n_{i_Q}-1}]$ described in \S\ref{subsubsec:first_measure}. Moreover, if $Q=G$ we have $M_{G,H}(\bA)^{G,1}=H(\bA)$ and $Z_G^\infty=\{1\}$. 

\subsubsection{Truncation operators} Let $T \in \fa_{0,H}$. For $\phi$ a locally integrable function on $Q(F) \backslash G(\bA)$, we define a relative truncation operator as
\begin{equation}
\label{eq:truncation_operator}
    \Lambda^{T,Q} \phi(h)=\sum_{\substack{P \in \cF_{\RS} \\ P \subset Q}} \varepsilon_P^Q \sum_{\delta \in P_H(F) \backslash Q_H(F)} \hat{\tau}_P^Q(H_{0,H}(\delta h)^Q_P -T^Q_P) \phi_P(\delta h), \quad h \in Q_H(F) \backslash H(\bA),
\end{equation}
where the sum over $\delta$ is actually over a finite set which only depends on $h$ and $T$ by \cite[Lemma~2.8]{Zydor}. If $Q=G$ we simply write $\Lambda^T$. The main theorem on this operator is the following. 

\begin{theorem}
\label{thm:truncation_operator}
    Let $T \in \fa_{0,H}$ be sufficiently positive. Let $J$ be a level of $G$. For any $N,N' >0 $ there exists a finite family $(X_i)_{i \in I}$ of elements in $\cU(\fg_\infty)$ such that for any smooth and right $J$-invariant function $\phi$ on $[G]_Q$, the function $\Lambda^{T,Q} \phi$ is a function on $[H]_{Q_H}$ and we have
    \begin{equation*}
        \sup_{m \in M_{Q,H}(\bA)^{Q,1}} \norm{m}^N_{M_{Q,H}} \Val{\Lambda^{T,Q} \phi(mk)} \leq \sum_{i \in I} \norm{\phi}_{-N',X_i}.
    \end{equation*}
\end{theorem}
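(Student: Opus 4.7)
The plan is to adapt Arthur's classical proof of rapid decay of the truncation operator, with Zydor's Rankin--Selberg modifications to handle the set $\cF_{\RS}$ and the $\fz_P$-coordinates.

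First, I would verify the preliminary claim that $\Lambda^{T,Q}\phi$ descends to a function on $[H]_{Q_H} = M_{Q_H}(F) N_{Q_H}(\bA) \backslash H(\bA)$. Local finiteness of the sum over $\delta$ is ensured by \cite[Lemma~2.8]{Zydor}, so $\Lambda^{T,Q}\phi$ is well-defined on $Q_H(F) \backslash H(\bA)$. The left invariance under $Q_H(F)$ is built into the sum over $\delta \in P_H(F) \backslash Q_H(F)$. For $N_{Q_H}(\bA)$-invariance, the key point is that for every $P \in \cF_{\RS}$ with $P \subset Q$ we have $N_{Q_H}(\bA) = (N_Q \cap H)(\bA) \subset N_P(\bA)$, hence $\phi_P$ is left-invariant under $N_{Q_H}(\bA)$. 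Since $H_{0,H}$ is trivially $N_{0,H}(\bA)$-invariant, every term in the sum is $N_{Q_H}(\bA)$-invariant, so $\Lambda^{T,Q}\phi$ is too.

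For the main estimate, I would use reduction theory for $M_{Q_H}$: by the Iwasawa decomposition \eqref{eq:facto_measure} it suffices to bound $\Lambda^{T,Q}\phi(mk)$ for $k \in K_H$ and $m$ in a Siegel set $\fS \subset M_{Q_H}(\bA)^{Q,1}$. On such a Siegel set, $H_{0,H}(m)^Q$ lies in a fixed translate of $\overline{\fz_{P_0}^{Q,+}}$ plus a bounded region. Now apply an Arthur--Langlands combinatorial identity to rewrite
\[
\sum_{\substack{P \in \cF_{\RS} \\ P \subset Q}} \varepsilon_P^Q \, \hat{\tau}_P^Q\bigl(H_{0,H}(\delta h)_P^Q - T_P^Q\bigr)
\]
as a sum involving the functions $\Gamma_R^Q(\cdot,T)$ of Lemma~\ref{lem:Fourier_transform}(2), which are compactly supported in $\fz_R^Q$ with support in the ball $B(T/2,\|T\|/2)$, paired with $\tau_P^R$-factors that confine $H_{0,H}(\delta h)$ to a positive cone. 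This combinatorial manipulation is the relative analog of Langlands's identity underlying Arthur's truncation.

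After this rewriting, the point is that for $m \in \fS$ with $\|m\|_{M_{Q,H}}$ large, the compact support of $\Gamma_R^Q$ in the $\fz_R^Q$-direction forces $H_{0,H}(\delta m)_R^Q$ to remain bounded, while $H_{0,H}(\delta m)$ itself must grow. This imbalance, together with the sufficient positivity $T \in T_{H,\mathrm{reg}} + \fa_{0,H}^+$, eliminates all terms except those forcing $\delta m$ into a specific region where one has access to Sobolev-type estimates for the constant terms $\phi_R$. Bounding $\phi_R$ pointwise by $\sum_i \|\phi\|_{-N',X_i}$ via iterated applications of left-invariant differential operators (so as to integrate by parts against the unipotent integration defining $\phi_R$), and using the polynomial growth of $\|m\|^N / \|m\|_{M_{Q,H}}^{N'}$ in controlled directions, yields the required estimate.

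The main obstacle is the combinatorial cancellation in the relative Rankin--Selberg setting: the sets $\cF_{\RS}$ and the $\fz_P$-coordinates do not literally form a standard parabolic lattice of a reductive group, so one cannot quote Arthur's identity verbatim. Rather, one must check that the identities of Lemma~\ref{lem:Fourier_transform} together with the description of $\cF_{\RS}$ through Corollary~\ref{cor:param_RS} are sufficient to reproduce the cancellation. Once this is in place, the passage from compact support of $\Gamma_P^Q$ to the pointwise estimate is essentially the same bookkeeping as in \cite[Section~2]{Zydor}.
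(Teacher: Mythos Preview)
Your proposal is correct and follows the approach of \cite[Theorem~3.9]{Zydor}, which is exactly what the paper invokes: the paper does not reproduce the argument but simply cites Zydor's result, noting that the uniform Sobolev-type bound stated here can be read off from Zydor's proof. Your sketch of the $N_{Q_H}(\bA)$-invariance, the Siegel-set reduction, and the Arthur--Langlands combinatorial cancellation is in line with that proof; one minor point is that the key identity in Arthur's (and Zydor's) rapid-decay argument is usually phrased via the partition functions $\sigma_P^Q$ or $F^P$ rather than directly through $\Gamma_R^Q$, but this is a matter of packaging and does not affect the substance of your outline.
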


\begin{proof}
    This is \cite[Theorem~3.9]{Zydor}. Note that the statement in ibid. is weaker, but our version can easily be extracted from the proof.
\end{proof}

The operator $\Lambda^{T,Q}$ also satisfies two unfolding properties.

\begin{lem}
\label{lem:unfolding_lambda}
    Let $\phi$ be a locally integrable function on $Q(F) \backslash G(\bA)$. For all $T,T' \in \fa_{0,H}$ and all $h \in H(\bA)$ we have
    \begin{equation}
    \label{eq:inversion_formula}
        \phi(h)=\sum_{\substack{P \in \cF_{\RS} \\ P \subset Q}} \sum_{\delta \in P_H(F) \backslash Q_H(F)} \tau_P^Q(H_{0,H}(\delta h)_P^Q-T_P^Q) \Lambda^{T,P} \phi(\delta h),
    \end{equation}
    and 
     \begin{equation}
        \Lambda^{T+T',Q} \phi(h) =\sum_{\substack{P \in \cF_{\RS} \\ P \subset Q}} \sum_{\delta \in P_H(F) \backslash Q_H(F)} \Gamma_P^Q(H_{0,H}(\delta h)_P^Q-T_P^Q,(T')_P^Q) \Lambda^{T,P} \phi(\delta h).
    \end{equation}
\end{lem}

\begin{proof}
    These formulae are \cite[Lemma~3.7]{Zydor} and \cite[Lemma~3.8]{Zydor}. Note that they are only stated there for $Q=G$. However, the proof ultimately relies on combinatorics of cones and therefore goes through in the general case.
\end{proof}

\subsubsection{Truncated periods}

Let $\phi \in \cT([G])$. We have $\phi_Q \in \cT([G]_Q)$. For $T$ sufficiently positive, we define the truncated period of $\phi$ relative to $Q$ to be
\begin{equation*}
    \cP^{T,Q} (\phi)=\int_{K_H} \int_{[M_{Q,H}]^{Q,1}} \exp(-\langle 2 \rho_{P_H}, H_{P_H}(m) \rangle) \Lambda^{T,Q} \phi(mk) dmdk.
\end{equation*}
This integral is absolutely convergent by Theorem~\ref{thm:truncation_operator} and \cite[Proposition~A.1.1]{Beu}. Note that if $Q=G$ it reduces to  
\begin{equation*}
    \cP^{T} (\phi)= \int_{[H]} \Lambda^{T} \phi(h) dh.
\end{equation*}

\subsubsection{Regularized periods}
Let $\varphi \in \cA_Q(G)$ be an automorphic form and let $P \in \cF_{\RS}$ which is contained in $Q$. By \S\ref{subsubsec:exponents}, we have a decomposition of $\varphi_P$ as sum of normalized automorphic forms in $\cA_P^0(G)$ multiplied by exponents. By reorganizing, we see that we also have a decomposition 
(where all terms are non-zero) such that
    \begin{equation}
    \label{eq:normalized_decompo_Z}
        \varphi_P(ah)=\sum_i q_{P,i}(H_{P_H}(a))  \exp(\langle \lambda_{P,i}, H_{P_H}(a) \rangle)\varphi_{P,i}(h), \quad a \in Z_P^\infty, \quad h \in N_{P_H}(\bA) M_{P,H}(\bA)^{P,1}K_H,
    \end{equation}
    with $q_{P,i} \in \cc[\fz_P]$, $\lambda_{P,i} \in \fz_P^*$, and $\varphi_{P,i} \in \cA_P(G)$ that are left $Z_P^\infty$-invariant. Let $\cE_{P,H}(\varphi)=\{\lambda_{P,i}\}$. Note that this is the restriction to $\fz_P$ of the elements in the set of exponents $\cE_P(\varphi)$ from \S\ref{subsubsec:exponents}. Let $\cA_Q(G)^{\mathrm{reg}}$ be the subspace of $\varphi \in \cA_Q(G)$ such that for all $P \in \cF_{\RS}$ with $P \subset Q$ we have
\begin{equation}
\label{eq:regular_conditions}
        \cE_{P}(\varphi) \subset \fa_{P,\cc}^{*,Q-\mathrm{reg}}.
\end{equation}
The subspace $\cA_Q(G)^{\mathrm{reg}}$ is stable under right-translations by $G(\bA)$.

\begin{theorem}
\label{thm:reg}
    For $\varphi \in \cA_Q(G)^{\mathrm{reg}}$ there exists a unique exponential polynomial on $\fa_{0,H}$ that coincides with $T \mapsto \cP^{T,Q}(\varphi)$ for $T$ sufficiently positive. Its purely polynomial part is constant and equal to
     \begin{equation}
    \label{eq:general_period}
        \cP^Q(\varphi)=\sum_{\substack{P \in \cF_{\RS} \\ P \subset Q}} \sum_i  \exp( \langle \lambda_{P,i}+\underline{\rho}_P,T_P^Q \rangle) \mathrm{FT}_P^Q((q_{P,i})_{T},\lambda_{P,i}+\underline{\rho}_P)\cP^{T,P}(\varphi_{P,i}),
    \end{equation}
    where $(q_{P,i})_{T}(H)=q_{P,i}(H+T_P^Q)$. Moreover, $\cP^G$ is $H(\bA)$-invariant.
\end{theorem}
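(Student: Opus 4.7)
The plan is to express $T\mapsto\cP^{T,Q}(\varphi)$ as an exponential polynomial via the second unfolding formula of Lemma~\ref{lem:unfolding_lambda} combined with the Fourier analysis of Lemma~\ref{lem:Fourier_transform}. Fix a reference $T$ sufficiently positive; for $T'\in\fa_{0,H}$ such that $T+T'$ is also sufficiently positive, apply the unfolding identity to $\Lambda^{T+T',Q}\varphi$ inside $\cP^{T+T',Q}(\varphi)$. Unfolding the sum over $\delta \in P_H(F)\backslash Q_H(F)$ against the integration over $K_H\times[M_{Q,H}]^{Q,1}$ rewrites $\cP^{T+T',Q}(\varphi)$ as
\[\sum_{\substack{P\in\cF_{\RS}\\ P\subset Q}}\int_{K_H}\int_{[M_{P,H}]^{Q,1}}\Gamma_P^Q(H_{0,H}(mk)_P^Q-T_P^Q,(T')_P^Q)\,\Lambda^{T,P}\varphi(mk)\, d(\text{weighted measure}).\]

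The core step is the Fourier analysis in the $\fz_P^Q$-direction. Split $[M_{P,H}]^{Q,1}\simeq \fz_P^Q\times[M_{P,H}]^{P,1}$ via the isomorphism $Z_P^\infty\cap M_{P,H}(\bA)^{Q,1}\xrightarrow{H_{P_H}}\fz_P^Q$, and apply the normalized decomposition \eqref{eq:normalized_decompo_Z} to obtain $\Lambda^{T,P}\varphi(a_X\cdot h')=\sum_i q_{P,i}(X)\exp(\langle\lambda_{P,i},X\rangle)\Lambda^{T,P}\varphi_{P,i}(h')$, where $a_X$ parametrizes the $\fz_P^Q$-factor with $H_{P_H}(a_X)=X$ and $h'\in M_{P,H}(\bA)^{P,1}K_H$. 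Combining the $\delta_P^{1/2}$-transformation of $\varphi_{P,i}\in\cA_P^0(G)$ with the measure weight $\exp(-\langle 2\rho_{P_H},\cdot\rangle)$ on the $Z_P^\infty$-direction produces precisely the shift $\lambda_{P,i}\mapsto\lambda_{P,i}+\underline{\rho}_P$. After the change of variable $Y=X-T_P^Q$, the $\fz_P^Q$-integral factors as $\exp(\langle\lambda_{P,i}+\underline{\rho}_P,T_P^Q\rangle)\,\mathrm{FT}_P^Q(\Gamma,T',(q_{P,i})_T,\lambda_{P,i}+\underline{\rho}_P)$, while the remaining $K_H\times[M_{P,H}]^{P,1}$-integral is exactly $\cP^{T,P}(\varphi_{P,i})$.

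By Lemma~\ref{lem:Fourier_transform}(2) together with the regularity hypothesis \eqref{eq:regular_conditions} (which ensures $\hat\theta_P^Q(\lambda_{P,i}+\underline{\rho}_P)\neq 0$), each Fourier transform is a regular exponential polynomial in $T'$ whose purely polynomial coefficient is the constant $\mathrm{FT}_P^Q((q_{P,i})_T,\lambda_{P,i}+\underline{\rho}_P)$. Summing over $P$ and $i$ shows that $T'\mapsto\cP^{T+T',Q}(\varphi)$, equivalently $S\mapsto\cP^{S,Q}(\varphi)$, is an exponential polynomial on the sufficiently positive cone, admitting a unique analytic continuation to $\fa_{0,H}$; its constant polynomial coefficient is independent of $T$ and coincides with the RHS of \eqref{eq:general_period}, proving the formula and its $T$-independence. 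For the $H(\bA)$-invariance of $\cP^G$, one uses that $R(h_0)\varphi$ has the same exponent and cuspidal data as $\varphi$ up to the translation by $h_0$ in the variable $h'$, and a change of variable in $\int_{[H]}$ together with the exponential polynomial structure forces the constant polynomial parts of $T\mapsto\cP^{T,G}(R(h_0)\varphi)$ and $T\mapsto\cP^{T,G}(\varphi)$ to agree. The main obstacle is the second paragraph's careful measure-theoretic bookkeeping, namely the emergence of $\underline{\rho}_P=(\rho_P-2\rho_{P_H})|_{\fz_P}$ from combining the measure weight with the automorphic normalization, and the compatible decomposition of $\Lambda^{T,P}\varphi$ via the exponents of the various constant terms $\varphi_R$ ($R\subset P$) that enter its definition.
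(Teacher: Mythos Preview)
Your proof is correct and follows essentially the same approach as the paper: both derive the key identity \eqref{eq:exact_dvlp} from the second unfolding formula of Lemma~\ref{lem:unfolding_lambda}, the Iwasawa decomposition \eqref{eq:facto_measure}, and then conclude via Lemma~\ref{lem:Fourier_transform}(2). The only minor difference is that the paper defers the $H(\bA)$-invariance (and indeed the entire $Q=G$ case) to \cite[Theorem~4.1]{Zydor}, whereas you sketch a direct argument for it.
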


\begin{proof}
    The $Q=G$ case is \cite[Theorem~4.1]{Zydor}. For the general case, using Lemma~\ref{lem:Fourier_transform}, the Iwasawa decomposition \eqref{eq:facto_measure} and Lemma~\ref{lem:unfolding_lambda} we have for any for $T$ sufficiently positive and $T' \in \overline{\fa_{0,H}^+}$ the expression
    \begin{equation}
    \label{eq:exact_dvlp}
        \cP^{T+T',Q}(\varphi)=\sum_{\substack{P \in \cF_{\RS} \\ P \subset Q}} \sum_i\exp(\langle \lambda_{P,i}+\underline{\rho}_P,T_P^Q \rangle) \mathrm{FT}_P^Q(\Gamma,T',(q_{P,i})_{T},\lambda_{P,i}+\underline{\rho}_P)\cP^{T,P}(\varphi_{P,i}).
    \end{equation}
    We now conclude by Lemma~\ref{lem:Fourier_transform}.
\end{proof}

The number $\cP^Q(\varphi)$ is the \emph{regularized period à la Zydor} along $Z_Q^\infty M_{Q_H}(F) N_{Q_H}(\bA) \backslash H(\bA)$.

\subsubsection{Parabolic descent} We describe $\Lambda^{T,Q}$ by parabolic descent. Set $P_{0,\bfQ}:=P_0 \cap \bfM_Q$, and $\fa_{0,\bfQ}:=\fa_{P_{0,\bfQ}}$. For every standard parabolic subgroup $\bfP$ of $\bfM_Q$ (with respect to $P_{0,\bfQ}$), let $\hat{\tau}^{\bfM_Q}_\bfP$ be the characteristic function of 
\begin{equation*}
    \left\{ H \in \fa_{0,\bfQ} \; \middle| \; \langle \varpi,H \rangle \geq 0, \; \forall \varpi \in \hat{\Delta}_{\bfP} \right\}.
\end{equation*}
Let $\bfT \in \fa_{0,\bfQ}^{\bfM_Q}$. For $\phi$ a locally integrable function on $[\bfM_Q^2]$, define the diagonal Arthur truncation operator to be 
\begin{equation}
\label{eq:Arthur_diagonal}
    \Lambda^{T,\bfM_Q^2}\phi(m)=\sum_{\bfP \in \cF^{\bfM_Q}(P_0)} (-1)^{\dim \fa_{\bfP}^{\bfM_Q}} \sum_{\delta \in \bfP(F) \backslash \bfM_Q(F)} \hat{\tau}^{\bfM_Q}_{\bfP}(H_{0}(\delta m) -\bfT) \phi_{\bfP^2}(\delta m,\delta m), \; m \in [\bfM_Q],
\end{equation}
where $\bfP^2$ is the standard parabolic subgroup of $\bfM_Q^2$ equal to $\bfP \times \bfP$. Note that $\Lambda^{T,\bfM_Q^2}$ is the truncation operator built in \cite[Section~3.7]{Zydor} relatively to the diagonal embedding $\bfM_Q \subset \bfM_Q^2$. In particular it takes smooth functions of uniform moderate growth on $[\bfM_Q^2]$ to functions of rapid decay on $[\bfM_Q]^1$ as in Theorem~\ref{thm:truncation_operator}. We set 
\begin{equation*}
    \cP^{\bfT,\bfM_Q^2}(\phi)=\int_{[\bfM_Q]^1} \Lambda^{\bfT,\bfM_Q^2} \phi(m)dm,
\end{equation*}
The operator $\Lambda^{T,\bfM_Q^2}$ differs from the usual Arthur truncation operator of \cite{Art80}. Indeed, in the formalism of \cite{Zydor}, the latter (say for some $\GL_k$) is associated to $\GL_k \subset \GL_k$ and not $\GL_k \subset \GL_k^2$. 

By \cite[Theorem~4.1]{Zydor}, $\cP^{\bfT,\bfM_Q^2}$ gives rise to a regularized period $\cP^{\bfM_Q^2}$ when restricted to $\cA(\bfM_Q^2)^{\mathrm{reg}}$ the subset of $\varphi \in\cA(\bfM_Q^2)$ such that for all standard parabolic subgroup $\bfP$ of $\bfM_Q$
\begin{equation}
\label{eq:regular_diagonal}
    \forall \lambda \in \cE_{\bfP^2}(\varphi), \; \forall \varpi^\vee \in \hat{\Delta}_\bfP^{\vee}, \; \langle \lambda, \varpi^\vee \rangle \neq 0.
\end{equation}
If $\phi \in \cA(\bfM_Q^2)^{\mathrm{reg}}$, then the regularized period $\cP^{\bfM_Q^2}(\phi)$ satisfies a relation of the form
\begin{equation}
\label{eq:general_period_partial}
     \cP^{\bfM_Q^2}(\phi)=\sum_{\bfP \subset \bfM_Q \; \mathrm{standard}} \sum_i  \exp( \langle \lambda_{\bfP,i},\bfT_{\bfP} \rangle) \mathrm{FT}_{\bfP}^{\bfM_Q^2}((q_{\bfP,i})_{\bfT},\lambda_{\bfP,i})\cP^{\bfT,\bfP}(\phi_{\bfP,i}),
\end{equation}
where the $\lambda_{\bfP,i}$, $q_{\bfP,i}$ and $\phi_{\bfP,i}$ are respectively the exponents, polynomials and automorphic forms appearing the normalized decomposition of $\phi_{\bfP^2}$, and $\mathrm{FT}_{\bfP}^{\bfM_Q^2}$ is a Fourier transform defined as in Lemma~\ref{lem:Fourier_transform} (see \cite[Section~4.4]{Zydor}).

Set $P_{0,\cQ}:=P_0 \cap \cM_{Q,H}$, and $\fa_{0,\cQ}:=\fa_{P_{0,\cQ}}$. Let $\cT \in \fa_{0,\cQ}$. For $\phi$ a locally integrable function on $[\cM_Q]$, we define $\Lambda^{\cT,\cM_Q} \phi$ to be the operator $\Lambda^{\cT}$ of \eqref{eq:truncation_operator} built with respect to $\cM_{Q,H} \subset \cM_Q$. We then have a truncated period $\cP^{\cT,\cM_Q}(\phi)$ and a regularized period $\cP^{\cM_Q}(\phi)$, the latter being well defined if $\phi \in \cA(\cM_Q)^{\mathrm{reg}}$ where this set is described as in \eqref{eq:regular_conditions}. 

The behavior of the regularized period $\cP^Q$ is summarized in the following lemma.

\begin{lem}
\label{lem:parabolic_descent}
    Let $\varphi \in \cA_Q(G)^{\mathrm{reg}}$. Set
    \begin{equation*}
        \varphi_{M_Q}=\left(R(e_{K_H})\varphi\right)_{|M_Q(\bA),-\rho_Q}.
    \end{equation*}
    Then we have
    \begin{equation}
    \label{eq:both_orders}
        \cP^Q(\varphi)=\cP^{\bfM_Q^2} \left(m \in [\bfM^2_Q] \mapsto \cP^{\cM_Q}(R(m)\varphi_{M_Q}) \right)=\cP^{\cM_Q} \left(m \in [\cM_Q] \mapsto \cP^{\bfM_Q^2}(R(m)\varphi_{M_Q}) \right).
    \end{equation}
\end{lem}

\begin{proof}
    For any $T \in \fa_{0,H}$ we have a unique decomposition $T^Q=\bfT+\cT$ where $\bfT \in \fa_{0,\bfQ}^{\bfM_Q}$ and $\cT \in \fa_{0,\cQ}$. By Lemma~\ref{lem:relative_bijection}, it is easily checked that we have for every $\mathbf{m} \in [\bfM_Q]$ and $m \in [\cM_{Q,H}]$
    \begin{align*}
        \Lambda^{T,Q} \varphi(\mathbf{m}m)&=\Lambda^{\bfT,\bfM_Q^2}\left( m' \in [\bfM_Q^2] \mapsto \Lambda^{\cT,\cM_Q} (R(m') \varphi) (m) \right)(\mathbf{m}) \\
        &=\Lambda^{\cT,\cM_Q}\left( m' \in [\cM_Q] \mapsto \Lambda^{\bfT,\bfM_Q} (R(m') \varphi) (\mathbf{m}) \right)(m).
    \end{align*}
    If $\phi$ is a locally integrable function on $[\bfM_Q^2]$, we know that the sum defining $\Lambda^{\bfT,\bfM_Q^2} \phi(m)$ is over a finite set depending only on $\bfT$ and $m$ and not on $\phi$ (\cite[Lemma~2.8]{Zydor}). The same is true with the other truncation operator $\Lambda^{\cT,\cM_Q}$. We may therefore switch $\Lambda^{\bfT,\bfM_Q^2}$ with the integrals over $[\cM_{Q,H}]$ and $K_H$ (or $\Lambda^{\cT,\cM_Q}$ with those on $[\bfM_Q]^1$ and $K_H$) and see that $\cP^{T,Q}(\varphi)$ is
    \begin{equation*}
        \cP^{\bfT,\bfM_Q^2} \left(m \in [\bfM^2_Q] \mapsto \cP^{\cT,\cM_Q}(R(m)\varphi_{M_Q}) \right)=\cP^{\cT,\cM_Q} \left(m \in [\cM_Q] \mapsto \cP^{\bfT,\bfM_Q^2}(R(m)\varphi_{M_Q}) \right)
    \end{equation*}
    It remains to say that the regularized periods can be expressed in terms of the truncated ones (Theorem~\ref{thm:reg} and \eqref{eq:general_period_partial}) to conclude.
\end{proof}

\begin{rem}
\label{rem:diagonal_Arthur}
     Note that if $\bfM_Q^2= \prod_{i=1}^m \GL_{n_i}^2$ and if $\phi \in \cA(\bfM_Q^2)^{\mathrm{reg}}$, we may further decompose
\begin{equation}
\label{eq:gln_product}
    \cP^{\bfM_Q^2}(\phi)=\bigotimes_{i=1}^m \cP^{\GL_{n_i}^2}(\phi),
\end{equation}
where $\cP^{\GL_{n_i}^2}$ is the regularized diagonal Arthur period along $\GL_{n_i}$. Moreover, by \cite[Theorem~4.6]{Zydor} we know that, when applied to a discrete automorphic form of some $\GL_k^2$, $\cP^{\GL_k^2}$ computes the Petersson (bilinear) inner-product $\langle \cdot,\bar{\cdot} \rangle_{\GL_k,\Pet}$.
\end{rem}

\subsubsection{Regularized periods of Eisenstein series} 
Let $P$ be a standard parabolic subgroup of $G$, let $\pi \in \Pi_\disc(M_P)$. Recall that such $\pi$ has trivial central character on $A_P^\infty$. Let $\varphi \in \cA_{P,\pi}(G)$. Let $w \in {}_{Q^{\std}} W_P$. For $\lambda \in \fa_{P_w,\cc}^*$ in general position, set
\begin{equation}
    \label{eq:truncated_period_defi}
    \cP^{T,Q}(\varphi,\lambda,w)=\cP^{T,Q}(E^{Q^{\std}}(w_Q^{\std,-1} \cdot,M(w,\lambda)\varphi_{P_w},w \lambda)),
\end{equation}

\begin{rem}
    The element $w_Q^\std$ was defined in \eqref{eq:standardized_P}. We prefer to make it appear in \eqref{eq:truncated_period_defi} to deal with the standard parabolic subgroup $Q^{\std}$ rather than $Q$.
\end{rem}

The truncated period in \eqref{eq:truncated_period_defi} is well defined for $\lambda$ in general position by \cite[Theorem~2.2]{Lap} (which ensures that generalized Eisenstein series are of uniform moderate growth), and if $P_\pi \not\subset P_w$ it is zero. Let us compute the exponents of the generalized Eisenstein series. Let $R \in \cF_{\RS}$. By \eqref{eq:constant_term} and Lemma~\ref{lem:relative_bijection} we have
    \begin{equation}
    \label{eq:partial_constant_term}
        \left(E^{Q^{\std}}(w_Q^{\std,-1} \cdot,M(w,\lambda)\varphi_{P_w},w \lambda)\right)_R=\sum_{\substack{w' \in {}_{R^{\std}} W_{Q^{\std}_w}^{Q^{\std}} w \\ P_\pi \subset P_{w'}}} E^{R^{\std}}(w_R^{\std,-1} \cdot, M(w',\lambda) \varphi_{P_{w'}},w' \lambda).
    \end{equation}
    Let $w' \in {}_{R^{\std}} W_{Q^{\std}_w}^{Q^{\std}} w$ such that $P_\pi \subset P_{w'}$. Denote by $(w'(\lambda+\nu_{w'}))_{R^{\std}}$ the projection of $w'(\lambda+\nu_{w'})$ on $\fa_{R^{\std},\cc}^*$. Then by Lemma~\ref{lem:contant_term} we know that
    \begin{equation}
    \label{eq:partial_normalization}
        \left(E^{R^{\std}}(M(w',\lambda) \varphi_{P_{w'}},w' \lambda)_{-(w'(\lambda+\nu_{w'}))_{R^{\std}}}\right)(w_R^{\std,-1} \cdot) \in \cA_R^0(G).
    \end{equation}
    Because $\fz_R \subset \fa_R$, it follows that the exponents of \eqref{eq:partial_constant_term} are the
    \begin{equation}
        \left(w_R^\std w'\left(\lambda+\nu_{w'} \right)\right)_{|\fz_R}.
    \end{equation}
    We will see in Lemma~\ref{lem:regular} that they are all contained in $\fa_{R,\cc}^{*,Q-\mathrm{reg}}$ for $\lambda \in \fa_{P,\cc}^*$ in general position. Therefore, for such $\lambda$ we set
    \begin{equation}
    \label{eq:PQE}
    \cP^Q(\varphi,\lambda,w)=\cP^Q(E^{Q^{\std}}(w_Q^{\std,-1} \cdot,M(w,\lambda)\varphi_{P_w},w \lambda)).
\end{equation}

\begin{lem}
\label{lem:regular}
    For every $R \subset Q$ and every $w' \in {}_{R^{\std}} W_{Q^{\std}_w}^{Q^{\std}}w $ such that $P_\pi \subset P_{w'}$ we have for $\lambda \in \fa_{P,\cc}^*$ in general position
    \begin{equation*}
        \hat{\theta}_R^{Q}( w_R^\std w'(\lambda+\nu_{w'})+\underline{\rho}_R)\theta_R^Q( w_R^\std w'(\lambda+\nu_{w'})+\underline{\rho}_R) \neq 0
    \end{equation*}
\end{lem}

\begin{proof}
    By Lemma~\ref{lem:W_CH} we have $w' \in {}_{R^{\std}} W_P$. Set $\bfR=R \cap \bfM_Q$ and $\cR=R \cap \cM_Q$. Write $P=P_{n} \times P_{n+1}$ and $\lambda=\lambda_n+\lambda_{n+1}$, $w=(w_n,w_{n+1})$ accordingly. Let $\varpi^\vee \in \hat{\Delta}_{R,H}^{Q,\vee}$. By \S\ref{subsubsec:relative_case} we have two cases.
    \begin{enumerate}
        \item $\varpi^\vee \in \hat{\Delta}_{\bfR,H}^{\bfM_Q,\vee}$: in that case we have $\langle \underline{\rho}_R,\varpi^\vee \rangle=0$ and there exist $\varpi_n^\vee \in \hat{\Delta}_{R_n}^{Q_n,\vee}$ and $\varpi_{n+1}^\vee \in \hat{\Delta}_{R^{\std}_{n+1}}^{Q^{\std}_{n+1},\vee}$ such that 
        \begin{equation*}
            \langle w' \lambda + w'\nu_{w'}, w_R^{\std,-1} \varpi^\vee \rangle=\langle w'_n \lambda_n + w'_n\nu_{w'_n}, \varpi_n^\vee \rangle+\langle w'_{n+1} \lambda_{n+1} + w'_{n+1}\nu_{w'_{n+1}}, \varpi^\vee_{n+1} \rangle.
        \end{equation*}
        By \cite[Lemma~3.1.5.1]{Ch} and \cite[Lemma~3.1.5.2]{Ch}, because $P_\pi \subset P_{w'}$ if this expression is constant then it is strictly negative.
        \item $\varpi^\vee \in \hat{\Delta}_{\cR,H}^{\cM_Q,\vee}$: in that case by \eqref{eq:coweight_defi} $\lambda \in \fa_{P,\cc}^* \mapsto \langle w' \lambda, w_R^{\std,-1} \varpi^\vee \rangle$ is clearly non-zero.
    \end{enumerate}
    This shows that $\hat{\theta}_R^{Q}( w_R^\std w'(\lambda+\nu_{w'})+\underline{\rho}_R) \neq 0$ for $\lambda$ in general position. The argument for $\theta_R^Q( w_R^\std w'(\lambda+\nu_{w'})+\underline{\rho}_R)$ is the same.
\end{proof}

\begin{prop}
\label{prop:unfold_periods}
    For $\lambda \in \fa_{P,\cc}^*$ in general position and $T$ sufficiently positive we have
    \begin{equation}
    \label{eq:P_unfolding}
        \cP^Q(\varphi,\lambda,w)=\sum_{\substack{ R \in \cF_{\RS} \\ R \subset Q}} \varepsilon_{R}^Q \sum_{w' \in {}_{R^{\std}} W_{Q^{\std}_w}^{Q^{\std}}w} \cP^{T,R}(\varphi,\lambda,w') \cdot \frac{\exp(\langle w_R^\std w'(\lambda+\nu_{w'})+\underline{\rho}_R,T_R^Q \rangle)}{\hat{\theta}_R^Q(w_R^\std w' (\lambda + \nu_{w'})+\underline{\rho}_R)},
    \end{equation}
    and
    \begin{equation}
    \label{eq:PT_unfolding}
         \cP^{T,Q}(\varphi,\lambda,w)=\sum_{\substack{ R \in \cF_{\RS} \\ R \subset Q}} \sum_{w' \in {}_{R^{\std}} W_{Q^{\std}_w}^{Q^{\std}}w} \cP^{R}(\varphi,\lambda,w') \cdot \frac{\exp(\langle w_R^\std w'(\lambda+\nu_{w'})+\underline{\rho}_R,T_R^Q \rangle)}{\theta_R^Q(w_R^\std w' (\lambda + \nu_{w'})+\underline{\rho}_R)},
    \end{equation}
    where in both cases we understand that the summands are zero unless $P_\pi \subset P_{w'}$ (whether or not the denominator is identically zero).
\end{prop}

\begin{proof}
    The first point follows from \eqref{eq:general_period} by the computation of the constant term in \eqref{eq:partial_constant_term} and of the exponents in \eqref{eq:partial_normalization}. Indeed, we know by Lemma~\ref{lem:regular} that for $\lambda \in \fa_{P,\cc}^*$ in general position
    \begin{equation*}
        E^{Q^{\std}}(w_Q^{\std,-1} \cdot,M(w,\lambda)\varphi_{P_w},w \lambda) \in \cA_Q(G)^{\mathrm{reg}},
    \end{equation*}
    and by Lemma~\ref{lem:Fourier_transform} we have $\mathrm{FT}_R^Q=\varepsilon_R^Q (\hat{\theta}_R^Q)^{-1}$. 

    We prove \eqref{eq:PT_unfolding}. To ease notation, we assume that $Q=G$ and $w=1$, the general case being similar. By the computation of the constant term in \eqref{eq:partial_constant_term}, we see by \eqref{eq:exact_dvlp} that for $T$ sufficiently positive, $T' \in \overline{\fa_{0,H}^+}$ and $\lambda \in \fa_{P,\cc}^*$ in general position, $ \cP^{T+T'}(\varphi,\lambda)$ is
    \begin{equation*}
       \sum_{R \in \cF_{\RS}} \sum_{w' \in {}_{R^{\std}} W_P} \exp(\langle w_R^\std w'(\lambda+\nu_{w'})+\underline{\rho}_R,T_R\rangle) \mathrm{FT}_R^G(\Gamma,T',1,w_R^\std w'(\lambda+\nu_{w'})+\underline{\rho}_R)\cP^{T,R}(\varphi,\lambda,w) 
    \end{equation*}
    By Lemma~\ref{lem:Fourier_transform} and Lemma~\ref{lem:regular}, for $\lambda \in \fa_{P,\cc}^*$ in general position we have
    \begin{equation*}
      \mathrm{FT}_R^G(\Gamma,T',1,w_R^\std w'(\lambda+\nu_{w'})+\underline{\rho}_R) =\sum_{Q \supset R} \varepsilon_R^Q \frac{\exp(\langle w_R^\std w'(\lambda+\nu_{w'})+\underline{\rho}_R,T'_Q\rangle) }{(\hat{\theta}_R^Q \theta_Q)(w_R^\std w'(\lambda+\nu_{w'})+\underline{\rho}_R)}.
    \end{equation*}
    Note that if $w' \in {}_{R^{\std}} W_{Q^{\std}_w}^{Q^{\std}}w$ for some $w \in {}_{Q^{\std}}W_P$, for any $H \in \fz_Q$ we have by Lemma~\ref{lem:relative_bijection}
    \begin{equation*}
        \langle w_R^\std w'(\lambda+\nu_{w'})+\underline{\rho}_R,H \rangle=\langle w_Q^\std w(\lambda+\nu_{w})+\underline{\rho}_Q, H \rangle.
    \end{equation*}
    By switching the sums over $Q$ and $R$ and using Lemma~\ref{lem:W_CH} we now see that 
    \begin{align*}
        \cP^{T+T'}(\varphi,\lambda)=&\sum_{Q \in \cF_{\RS}} \sum_{w \in {}_{Q^{\std}}W_P} \frac{\exp(\langle w_Q^\std(\lambda+\nu_{w})+\underline{\rho}_R,T_Q+T'_Q\rangle)}{\theta_Q(w_Q^\std w(\lambda+\nu_w)+\underline{\rho}_Q)} \times \\
        &\left( \sum_{R \subset Q} \varepsilon_R^Q \sum_{w' \in {}_{R^{\std}} W_{Q^{\std}_w}^{Q^{\std}}w} \cP^{T,R}(\varphi,\lambda,w') \cdot \frac{\exp(\langle w_R^\std w'(\lambda+\nu_{w'})+\underline{\rho}_R,T_R^Q \rangle)}{\hat{\theta}_R^Q(w_R^\std w' (\lambda + \nu_{w'})+\underline{\rho}_R)} \right).
    \end{align*}
    By the first part of the theorem, the last line is $\cP^{Q}(\varphi,\lambda,w')$. We conclude by taking $T'=0$.
\end{proof}

\begin{cor}
\label{cor:P_regular}
    The map $\lambda \in \fa_{P,\cc}^* \mapsto \cP^Q(\varphi,\lambda,w)$ is meromorphic. Moreover, let $f$ be a holomorphic function such that
    \begin{equation*}
        \lambda \mapsto M^*(w,\lambda) \varphi_{P_w}:=f(\lambda) M(w,\lambda) \varphi_{P_w}
    \end{equation*}
    is holomorphic in a neighborhood of $\lambda_0$. Then the map
    \begin{equation*}
        \lambda \mapsto \cP^{Q,*}(\varphi,\lambda,w):=f(\lambda) \cP^Q(\varphi,\lambda,w)
    \end{equation*}
    is meromorphic and we have in a neighborhood of $\lambda_0$
    \begin{equation}
    \label{eq:residue_PQ}
        \cP^{Q,*}(\varphi,\lambda,w)=\cP^Q(M^*(w,\lambda)\varphi_{P_w},\lambda,w).
    \end{equation}
\end{cor} 

\begin{proof}
    By Theorem~\ref{thm:truncation_operator} and the continuity of Eisenstein series from \cite[Theorem~2.2]{Lap}, for every $T$ sufficiently positive the map $\lambda \mapsto \cP^{T,Q}(\varphi,\lambda,w)$ is meromorphic. The first statement therefore follows from Lemma~\ref{lem:regular} and Proposition~\ref{prop:unfold_periods}. The other assertions are proved by the same argument. 
\end{proof}

\subsection{Regularized periods and Zeta functions}
\label{sec:reg_zeta}

In this section, we give a representation-theoretic interpretation of the regularized period $\cP^Q$. By Lemma~\ref{lem:parabolic_descent} and Remark~\ref{rem:diagonal_Arthur}, it is enough to understand Zydor's regularized period $\cP$. We show that $\cP$ it coincides with the Ichino--Yamana regularized period $\cP^{IY}$ built in \cite{IY}. It follows from \cite[Theorem~1.1]{IY} that it computes the Zeta integral of automorphic forms.

\subsubsection{Local Whittaker functionals}

Let $\psi$ be a non-trivial automorphic unitary character of $F \backslash \bA$. We define $\psi_0$ a generic automorphic character of $N_0(\bA)$ by the formula
\begin{equation*}
    \psi_0(u)=\psi \left(\sum_{i=1}^{n-1} -(u_n)_{i,i+1} \right)\psi \left(\sum_{i=1}^{n} (u_{n+1})_{i,i+1}\right), \quad u \in N_0(\bA).
\end{equation*}
This is a generic character of $N_0(\bA)$ which is trivial on $N_{0,H}(\bA)$. It decomposes as $\psi_0=\prod_v \psi_{0,v}$.

We now fix $v$ a place of $F$. Let $P$ be a standard parabolic subgroup of $G$, and let $\pi_v$ be a smooth irreducible representation of $M_P(F_v)$. We say that $\pi_v$ is generic if it admits a non-zero Whittaker model with respect to the restriction of $\psi_{0,v}$ to $M_P(F_v) \cap N_0(F_v)$ (in which case it is for every generic character). We denote by $\cW(\pi_v,\psi_{0,v})$ the Whittaker model of $\pi_v$, and take an isomorphism $W_{\pi_v}^{\psi_{0,v}} : \pi_v \to \cW(\pi,\psi_{0,v})$. 

Let $w$ be the element in the Weyl group of $G$ such that $w(\Delta_0^P) \subset \Delta_0$ and such that for every root $\alpha$ in $N_P$ we have $w \alpha <0$. Let $P'=M'N'$ be the standard parabolic subgroup of $G$ with standard Levi factor $wM_Pw^{-1}$. Let $w \psi_{0}$ be the generic character of $(M_P \cap N_0)(\bA)$ such that for $n' \in (M_{P'} \cap N_0)(\bA)$ we have $w \psi_{0}(\dot{w}^{-1} n' \dot{w})=\psi_{0}(n')$. Let $\lambda \in \fa_{P,\cc}^*$. For $\varphi_v \in I_{P}^G(\pi_{v,\lambda})$ with support in $P(F_v) \dot{w} P_0(F_v)$, we define the \emph{Jacquet functional} by
\begin{equation}
\label{eq:local_Jacquet}
    W^{\psi_{0,v}}_{P,\pi_v}(\varphi_v,\lambda)=\int_{N'(F_v)} W^{w\psi_{0,v}}_{\pi_v}(\varphi_{\lambda,v}(\dot{w}^{-1}n'_v)) \overline{\psi_{0,v}}(n'_v) dn'_v.
\end{equation}
Then $W^{\psi_{0,v}}_{P,\pi_v}(\cdot,\lambda)$ extends to a linear form on $I_{P}^G(\pi_{v,\lambda})$. We also write $W^{\psi_{0,v}}_{P,\pi_v}$ for the associated Whittaker function, i.e. 
\begin{equation*}
    W^{\psi_{0,v}}_{P,\pi_v}(g_v,\varphi_v,\lambda)=W^{\psi_{0,v}}_{P,\pi_v}(R(g_v)\varphi_v,\lambda).
\end{equation*}
Moreover, the map $\lambda \mapsto W^{\psi_{0,v}}_{P,\pi_{v}}(g_v,\varphi_v,\lambda)$ is holomorphic (see \cite{CS}).

\subsubsection{Local Zeta functions}

For $\lambda \in \fa_{P,\cc}^*$ with $\Re(\lambda)$ in a positive cone, we define the local Zeta function by 
 \begin{equation}
        \label{eq:local_zeta_int}
            Z_{\pi_v}(\varphi_v,\lambda)=\int_{N_{0,H}(F_v) \backslash H(F_v)}  W^{\psi_{0,v}}_{P,\pi_v}(h_v,\varphi_v,\lambda) dh_v, \quad \varphi_v \in I_{P}^G \pi_v.
    \end{equation}
It admits a meromorphic continuation to $\fa_{P,\cc}^*$.

Write $M_P=\prod_{i=1}^{m_n} \GL_{n_{n,i}} \times \prod_{i=1}^{m_{n+1}} \GL_{n_{n+1,i}}$ and $\pi_v=\boxtimes_{i=1}^{m_n} \pi_{v,n,i} \boxtimes_{i=1}^{m_{n+1}} \pi_{v,n+1,i}$ accordingly. For every place $v$, set
\begin{equation}
    \label{eq:b_v}
    b(\lambda,\pi_v)=\prod_{i<j}L(1+\lambda_{n,i}-\lambda_{n,j},\pi_{v,n,i} \times \pi_{v,n,j}^\vee)\prod_{i<j}L(1+\lambda_{n+1,i}-\lambda_{n+1,j},\pi_{v,n+1,i} \times \pi_{v,n+1,j}^\vee).
\end{equation}
Moreover, consider the Rankin--Selberg $L$-function
\begin{equation}
    \label{eq:local_RS}
    L\left(\lambda+\frac{1}{2},\pi_{v,n} \times \pi_{v,n+1}\right):=\prod_{i=1}^{m_n} \prod_{j=1}^{m_{n+1}} L(1/2+\lambda_{n,i}+\lambda_{n+1,j},\pi_{v,n,i} \times \pi_{v,n+1,j}).
\end{equation}
We now define the \emph{normalized local Zeta function} by
\begin{equation*}
    Z_{\pi_v}^\natural(\varphi_v,\lambda):=b(\lambda,\pi_v) \frac{Z_{\pi_v}(\varphi_v,\lambda)}{L\left(\lambda+1/2,\pi_{v,n} \times \pi_{v,n+1}\right)}.
\end{equation*}

\begin{lem}
    \label{lem:local_zeta_reg}
    For any $\varphi_v \in I_P^G \pi_v$, the quotient
    \begin{equation}
        \label{eq:quotient_zeta}
        \lambda \mapsto \frac{Z_{\pi_v}(\varphi_v,\lambda)}{L\left(\lambda+1/2,\pi_{v,n} \times \pi_{v,n+1}\right)}
    \end{equation}
    is holomorphic on $\fa_{P,\cc}^*$. If we assume moreover that $\pi_v$ is unitary, the regularized Zeta function $\lambda \mapsto Z_{\pi_v}^\natural(\varphi_v,\lambda)$ is a meromorphic function on $\fa_{P,\cc}^*$ which is regular in the region $\Re(\lambda) \in \overline{\fa_{P}^{*,+}}$.
\end{lem}

\begin{proof}
    Assume by contradiction that \eqref{eq:quotient_zeta} is not regular on $\fa_{P,\cc}^*$. Then there exists some $\mu$ such that there is a single polar divisor passing through $\mu$ and no zero divisor. Set $W_v=W_{P,\pi_v}^{\psi_{0,v}}(\cdot,\varphi_v,\mu)$ which belongs to the Whittaker model of $I_P^G \pi_{v,\mu}$. Assume first that $v$ is non-Archimedean. By \cite[Theorem~2.7]{JPSS83}, there exists a meromorphic function $L(s+1/2,I_P^G \pi_{v,\mu})$ such that the quotient
    \begin{equation*}
        s \mapsto \frac{Z_{\pi_v}(W_v,s)}{L(s+1/2,I_P^G \pi_{v,\mu})}
    \end{equation*}
    is entire, where $Z_{\pi_v}(W_v,s)$ is the Zeta function from \eqref{eq:local_zeta_int} associated to $W_v$. By \cite[Theorem~3.1]{JPSS83}, $L(s+1/2,I_P^G\pi_{v,\mu})$ is equal to the product of $ L\left(1/2+s+\lambda,\pi_{v,n} \times \pi_{v,n+1}\right)$ (where we identify $s$ with an element in $\fa_{\GL_n,\cc}^*$) with a polynomial in $q_v^s$ and $q_v^{-s}$. Therefore, \eqref{eq:quotient_zeta} is also entire. This is the desired contradiction as the singularity at $\mu$ cannot be compensated by a zero by assumption. In the Archimedean case, one argues similarly using \cite[Theorem~2.1]{Jac09} instead.
    
    Finally, because $\pi_v$ is unitary and generic, $b(\lambda,\pi_v)$ is regular in the region $\Re(\lambda) \in \overline{\fa_{P}^{*,+}}$. by \cite[Section~2]{BR}. This proves the last point.  
\end{proof}

\subsubsection{Global Zeta functions}
\label{subsubsec:global_zeta}

We now go back to the global setting. For any automorphic form $\Phi \in \cA(G)$, we may consider the global Whittaker function
\begin{equation}
\label{eq:global_whitt}
    W^{\psi_0}(g,\Phi)=\int_{[N_0]} \Phi(ng) \overline{\psi_0}(n) dn, \quad g \in [G].
\end{equation}
For $s \in \cc$, we consider the \emph{global Zeta integral}
\begin{equation*}
    Z(\Phi,s)=\int_{N_{0,H}(\bA) \backslash H(\bA)} W^{\psi_0}(h,\Phi) \Val{\det h}^s dh.
\end{equation*}
By \cite[Lemma~7.1.1.1]{BPCZ}, this integral is absolutely convergent for $\Re(s)$ large enough, and by \cite[Corollary~5.3]{IY}, it admits a meromorphic continuation to $\cc$. 

Let $P$ be a standard parabolic subgroup of $G$, let $\pi \in \Pi_{\disc}(M_P)$. If $\varphi \in \cA_{P,\pi}(G)$, we consider 
\begin{equation*}
    \lambda \in \fa_{P,\cc}^* \mapsto Z_\pi(\varphi,\lambda):=Z(E(\varphi,\lambda),0).
\end{equation*}
This is a meromorphic function in $\lambda$ by \cite[Corollary~5.4]{IY} and \cite[Equation~(4.2)]{IY}. Moreover, assume that $\varphi=\otimes'_v \varphi_v$ is factorizable and consider the product of global $L$ functions $L(\lambda+1/2,\pi_n \times \pi_{n+1})$ and $b(\lambda,\pi)$ which are the analogues of \eqref{eq:b_v} and \eqref{eq:local_RS}. By the unramified computations of \cite{CS} and \cite[Theorem~7.1]{Cog}, there exists $\tS$ a finite set of places of $F$ such that 
\begin{equation}
    \label{eq:global_facto_zeta}
    Z_\pi(\varphi,\lambda)=\frac{L(\lambda+\frac{1}{2},\pi_n \times \pi_{n+1})}{b(\lambda,\pi)} \times \prod_{v \in \tS} Z_{\pi_v}^\natural(\varphi_v,\lambda).
\end{equation}

The Zeta function is identically zero as soon as $\pi$ is not cuspidal as residual representations are not generic (see \cite[Theorem~2.7]{Li92} for a proof that their unramified components are not generic).

\subsubsection{Enter the Zeta integrals} We now relate the regularized periods $\cP$ and the Zeta integrals $Z_\pi$. We follows the strategy of \cite{IY}. The starting point is a weak form of the fine expansion of the Rankin--Selberg period for pseudo-Eisenstein series which vanish along singular affine hyperplanes. Its proof is nearly identical to that of \cite[Lemma~4.9]{IY} and \cite[Lemma~9.1.1]{LR}.

\begin{lem}
\label{lem:vanishing_unfolding}
    Let $P_{n+1}$ be a standard parabolic subgroup of $\GL_{n+1}$. Let $\Phi \in \cP\cW_{P_{n+1}}$ be a Paley--Wiener function (see \S\ref{subsubsec:pseudo_eisenstein}). Let $\varphi_n \in \cA(\GL_n)$. Assume that $\Phi(\lambda)$ vanishes along the affine hyperplanes of $\lambda \in \fa_{P_{n+1},\cc}^*$ such that
    \begin{equation}
    \label{eq:singular_hyperplanes}
        \langle w_Q^\std w \lambda + \lambda_n + \underline{\rho}_Q, \varpi^\vee \rangle=0,
    \end{equation}
    for any $Q \in \cF_{\RS}$, $w \in W(P_{n+1};Q^{\std}_{n+1})$, $\lambda_n \in \cE_{Q_n}(\varphi_n)$ and $\varpi^\vee \in \hat{\Delta}_{Q,H}^\vee$ (counted with multiplicities). Then for $\kappa \in \fa_{P_{n+1}}^*$ sufficiently positive we have
    \begin{equation*}
        \int_{[H]} \varphi_n(h) E(h,F_\Phi)dh = \int_{\Re(\lambda)=\kappa} \cP(\varphi_n \otimes E(\Phi(\lambda),\lambda)) d \lambda.
    \end{equation*}
    
\end{lem}

\begin{proof}
    Let $\kappa \in \fa_{P_{n+1}}^*$ be sufficiently positive. By \eqref{eq:pseudo_Eisenstein_unfold}, we have
    \begin{equation}
    \label{eq:pseudo_period}
        \int_{[H]} \varphi_n(h) E(h,F_\Phi)dh=\int_{[H]} \int_{\Re(\lambda)=\kappa} \varphi_n(h)E(h,\Phi(\lambda),\lambda) d\lambda dh.
    \end{equation}
    By Lemma~\ref{lem:unfolding_lambda} and the computation of the constant term of cuspidal Eisenstein series in \eqref{eq:constant_term_cuspidal}, we have for fixed $h$ and $\lambda$
    \begin{align*}
        \varphi_n(h)E(h,\Phi(\lambda),\lambda)=\sum_{Q \in \cF_{\RS}} &\sum_{\delta \in Q_H(F) \backslash H(F)} \tau_Q(H_{0,H}(\delta h)_Q-T_Q) \\
        \times&\sum_{w \in W(P_{n+1};Q^{\std}_{n+1})}\Lambda^{T,Q} \left((\varphi_n)_{Q_n} \otimes E^{Q_{n+1}}(M(w,\lambda)\Phi(\lambda),w\lambda)\right)(\delta h),
    \end{align*}
    where we write $E^{Q_{n+1}}(M(w,\lambda)\Phi(\lambda),w\lambda)$ for $E^{Q^{\std}_{n+1}}(w_Q^{\std,-1}\cdot,M(w,\lambda)\Phi(\lambda),w\lambda)$. Because these three sums are finite, we see that \eqref{eq:pseudo_period} is
    \begin{equation}
    \label{eq:partial_period}
        \int_{[H]} \sum_{Q} \sum_{w} \sum_{\delta} \tau_Q(H_{0,H}(\delta h)_Q-T_Q) \int_{\Re(\lambda)=\kappa} \Lambda^{T,Q}\left( (\varphi_n)_{Q_n} \otimes E^{Q_{n+1}}(M(w,\lambda)\Phi(\lambda),w\lambda)(\delta h) \right)d \lambda dh.
    \end{equation}
    Fix $Q$ and $w$. Write the normalized decomposition
    \begin{equation*}
        (\varphi_n)_{Q_n}(g)=\sum_i q_{Q,i}(H_Q(g))  \exp(\langle \lambda_{Q,i}, H_Q(g) \rangle)\varphi_{i}(g), \quad g \in G(\bA)
    \end{equation*}
    as in \eqref{eq:normalized_decompo_Z}. Let $\underline{\rho}_{Q}^{\std}$ be the element defined in \S\ref{subsubsec:rho_defi}. Because it belongs to $\overline{\fa_{Q^{\std}}^{*,+}}$, we see that for every $t>0$ it satisfies the following properties:
    \begin{enumerate}
        \item if $\alpha \in \Delta_{P_{n+1}}$ with $w \alpha <0$, then $\langle -tw^{-1}\underline{\rho}_{Q}^{\std},\alpha^\vee \rangle \geq 0$;
        \item for every $\alpha \in \Delta_{Q^{\std}_{n+1,w}}^{Q^{\std}_{n+1}}$, $\langle -t \underline{\rho}_{Q}^{\std},\alpha^\vee \rangle =0$;
        \item for every $\varpi^\vee \in \hat{\Delta}_{Q,H}^{\vee}$, $\lim \limits_{t \to \infty} \langle - tw_Q^\std \underline{\rho}_{Q}^{\std}, \varpi^\vee \rangle=\lim \limits_{t \to \infty} \langle - t\underline{\rho}_{Q}, \varpi^\vee \rangle = - \infty$.
    \end{enumerate}
    By 1 and 2, we can shift the contour of integration of the outermost integral of \eqref{eq:partial_period} to $\Re(\lambda)=\kappa-tw^{-1} \underline{\rho}_{Q}^{\std}$ without encountering any poles of $M(w,\lambda)$ or $E(\cdot,w\lambda)$. Moreover, we stay in their region of absolute convergence by \cite{MW95}, so that the integrand is of rapid decay in vertical strips by Theorem~\ref{thm:truncation_operator}. By 3, the factorization of the measures \eqref{eq:facto_measure}, Lemma~\ref{lem:Fourier_transform} and Theorem~\ref{thm:truncation_operator}, for $t$ large enough we have 
    \begin{align*}
         &\int_{[H]} \sum_{\delta} \tau_Q(H_{0,H}(\delta h)_Q-T_Q) \int_{\Re(\lambda)=\kappa} \Lambda^{T,Q} \left((\varphi_n)_{Q_n} \otimes E^{Q_{n+1}}(M(w,\lambda)\Phi(\lambda),w\lambda)\right)(\delta h) d \lambda dh \\
         =&\sum_i \int_{\Re(\lambda)=\kappa-tw^{-1}\underline{\rho}_{Q}^{\std}} \mathrm{FT}_Q(q_{Q,i},T,w_Q^\std w\lambda+\lambda_{Q,i}+\underline{\rho}_Q) \cP^{T,Q}(\varphi_i \otimes E^{Q_{n+1}}(M(w,\lambda)\Phi(\lambda),w\lambda)) d \lambda,
    \end{align*}
    where in the first line the first double integral is absolutely convergent. It follows the first quadruple integral in \eqref{eq:partial_period} is absolutely convergent, and that by shifting back the contour to $\kappa$, which is possible because of the vanishing assumptions on $\Phi$ by Lemma~\ref{lem:Fourier_transform}, it is
    \begin{equation*}
        \int_{\Re(\lambda)=\kappa} \sum_{Q,w,i} \mathrm{FT}_Q(q_{Q,i},T,w_Q^\std w\lambda+\lambda_i+\underline{\rho}_Q) \cP^{T,Q}(\varphi_{i} \otimes E^{Q_{n+1}}(M(w,\lambda)\Phi(\lambda),w\lambda)) d \lambda.
    \end{equation*}
    By \eqref{eq:general_period} the integrand is $\cP(\varphi_n \otimes E(\Phi(\lambda),\lambda))$, which concludes.
\end{proof}

\begin{prop}
\label{prop:Zydor_RS}
    Let $\varphi=\varphi_n \otimes \varphi_{n+1} \in \cA(G)$. For $s \in \cc$, write $\varphi_s=\varphi_{n,s} \otimes \varphi_{n+1}$. Then we have the equality of meromorphic functions $\cP(\varphi_s)=Z(\varphi,s)$. In particular, if $P$ is a standard parabolic subgroup of $G$ and if $\pi \in \Pi_\cusp(M_P)$, we have for $\lambda \in \fa_{P,\cc}^*$ in general position
    \begin{equation*}
        \cP(\varphi,\lambda)=Z_\pi(\varphi,\lambda).
    \end{equation*}
\end{prop}

\begin{proof}
    This is proved as in \cite[Theorem~1.1]{IY}. Let us sketch the proof. \cite{IY}, Ichino and Yamana build a regularized period $\cP^{IY}$ with similar properties as Zydor's version $\cP$. By \cite[Lemma~4.9]{IY}, it satisfies 
     \begin{equation*}
        \int_{[H]} \varphi_n(h) E(h,F_\Phi)dh = \int_{\Re(\lambda)=\kappa} \cP^{IY}(\varphi_n \otimes E(\Phi(\lambda),\lambda)) d \lambda,
    \end{equation*}
    under the same vanishing hypotheses on $\Phi$ as in Lemma~\ref{lem:vanishing_unfolding}. Moreover, it is proved in \cite[Lemma~4.7]{IY} that, still with these vanishing requirements, for $s \in \cc$ with $\Re(s)$ large enough and for any $f \in C_c^\infty(G(F_\infty))$ we have
    \begin{equation*}
        \int_{[H]} \varphi_{n,s}(h) E(h,f*F_\Phi)dh = \int_{\Re(\lambda)=\kappa} Z\left(\varphi_n \otimes E(I_P(\lambda,f)\Phi(\lambda),\lambda),s\right) d \lambda.
    \end{equation*}
    By repeating the argument of \cite[Section~4.10]{IY}, we see that for all $\lambda$ sufficiently positive, and therefore that for all regular $\lambda$ by analytic continuation we have
    \begin{equation*}
        \cP(\varphi_n \otimes E(\Phi(\lambda),\lambda))=\cP^{IY}(\varphi_n \otimes E(\Phi(\lambda),\lambda))=Z(\varphi_n \otimes E(\Phi(\lambda),\lambda),0).
    \end{equation*}
    Proposition~\ref{prop:Zydor_RS} now follows from Franke's theorem \cite{Franke} which states that any automorphic form can be obtained as a linear combination of derivatives of cuspidal Eisenstein series (see \cite[Section~4.6]{IY}).
\end{proof}

\begin{rem}
    \label{rem:differente_truncations}
    As a byproduct of the proof of Proposition~\ref{prop:Zydor_RS}, we obtain the equality $\cP(\varphi)=\cP^{IY}(\varphi)$ for any $\varphi \in \cA(G)^{\mathrm{reg}}$. One may moreover check that the space $\cA(G \times G')^*$ of \cite[Definition~3.2]{IY}, which is the space of automorphic forms on which $\cP^{IY}$ is defined, is indeed equal to our $\cA(G)^{\mathrm{reg}}$. However, the truncations $\Lambda^T$ and $\Lambda^{T,IY}$ are different. In the case $G=\GL_1 \times \GL_2$, we have $\cF_{\RS}=\{G,P_0,\overline{P}_0\}$ where $\overline{P}_0$ is the opposite Borel. Then
    \begin{equation}
    \label{eq:example_truncation}
        \Lambda^T \phi(h) = \phi(h) - 1_{H_{0,H}(h)\geq T} \cdot \phi_{P_0}(h) - 1_{H_{0,H}(h)\leq T} \cdot \phi_{\overline{P}_0}(h), \quad h \in \GL_1(\bA),
    \end{equation}
    while
    \begin{equation*}
        \Lambda^{T,IY} \phi(h) = \phi(h) - 1_{H_{0,H}(h)\geq T} \cdot \phi_{P_0}(h) - 1_{H_{0,H}(h)\leq -T} \cdot \phi_{\overline{P}_0}(h), \quad h \in \GL_1(\bA).
    \end{equation*}
  
\end{rem}

\subsection{Residues of Rankin--Selberg periods}
\label{sec:residues_RS}

In this section, we compute some residues of the regularized period $\lambda \mapsto \cP(\varphi,\lambda)$. 

\subsubsection{A naive notion of residues} \label{subsubsec:naive_residue} Let $m \geq 1$, let $f$ be a meromorphic function on $\cc^m$. Let $\Lambda$ be a non-zero affine linear form on $\cc^m$. Write $\cH_{\Lambda}$ for the affine hyperplane $\{ v \in \cc^m \; | \; \Lambda(v)=0\}$. The map $v \mapsto \Lambda(v) f(v)$ is a meromorphic function on $\cc^m$. Assume that $\cH_{\Lambda}$ is not contained in its polar divisor (i.e. $\cH_{\Lambda}$ is at most a simple polar divisor of $f$). Then its restriction to $\cH_{\Lambda}$ is a meromorphic function on $\cH$, and we set
\begin{equation*}
    \underset{\Lambda}{\Res} f:= \left( \Lambda f\right)_{| \cH_{\Lambda}}.
\end{equation*}
Let $\Lambda_1, \hdots, \Lambda_r$ be a family of affine linear forms such that the underlying family of linear forms is linearly disjoint. We consider the iterated residue
\begin{equation*}
    \underset{\Lambda_{r} \leftarrow \Lambda_{1}}{\Res} f:= \underset{\Lambda_{r}}{\Res} \hdots \underset{\Lambda_{1}}{\Res} f,
\end{equation*}
provided each residue is defined in the above sense. This is a meromorphic function on $\cH:=\bigcap \cH_{\Lambda_i}$. Note that the iterated residue a priori depends on the order of the affine linear forms. The following easy lemma gives a condition for when it does not.

\begin{lem}
\label{lem:residues_commute}
    Assume that there exists a meromorphic function $g$ on $\cc^m$ such that
    \begin{equation*}
        f(\lambda)=\frac{g(\lambda)}{\prod_{i=1}^r \Lambda_{i}(\lambda)},
    \end{equation*}
    and that moreover $\cH$ is not contained in any of the singularities of $g$. Then the residue $ \underset{\Lambda_{r} \leftarrow \Lambda_{1}}{\Res} f$ may be taken in any order and we have $\underset{\Lambda_{r} \leftarrow \Lambda_{1}}{\Res} f=g_{|\cH}$.
\end{lem}

\subsubsection{Residues as regularized periods}
Let $P$ be a standard parabolic subgroup of $G$ and $\pi \in \Pi_\cusp(M_P)$. Write the Levi factor $M_P=\left( \prod_{i=1}^{m_n} \GL_{n_{n,i}} \right) \times \left( \prod_{j=1}^{m_{n+1}} \GL_{n_{n+1,j}} \right)$ and $\pi=\boxtimes \pi_{n,i} \boxtimes \pi_{n+1,j}$ accordingly.

\begin{prop}
\label{prop:easy_residue}
Let $1 \leq i_1, \hdots, i_m \leq m_n$ (resp. $1 \leq j_1, \hdots, j_m \leq m_{n+1}$) be distinct indices such that $\pi_{n,i_l}=\pi_{n+1,j_l}^\vee$ for $1 \leq l \leq m$. Set $k=n-\sum_l n_{n,i_l}$.
\begin{enumerate}
    \item For every $l$, let $\Lambda_{l}$ be the affine linear form $\Lambda_{l}(\lambda)=\lambda_{n,i_l}+\lambda_{n+1,j_l}+\frac{1}{2}$. Let $Q_{n+1}^{\std}$ be the standard parabolic subgroup of $\GL_{n+1}$ of standard Levi factor $\left(\prod_{l=1}^m \GL_{n_{n,i_l}} \right) \times \GL_{k+1}$. Let $Q \in \cF_{\RS}$ be the element corresponding to $(Q^{\std}_{n+1},m+1)$ under the bijection of Corollary~\ref{cor:param_RS}. Let $w=(w_n,w_{n+1}) \in W(P;Q^{\std})$ be the only element such that $w_n(i_l)=w_{n+1}(j_l)=l$ for $1 \leq l \leq m$. Then for every $\varphi \in \cA_{P,\pi}(G)$ we have
    \begin{equation}
    \label{eq:residue_easy_+}
        \underset{\Lambda_m \leftarrow \Lambda_1}{\Res} \cP(\varphi,\lambda)=(-1)^m\cP^Q(\varphi,\lambda,w).
    \end{equation}
    \item For every $l$, let $\Lambda_l$ be the affine linear form $\Lambda_l(\lambda)=\lambda_{n,i_l}+\lambda_{n+1,j_l}-\frac{1}{2}$. Let $Q^{\std}_{n+1}$ be the standard parabolic subgroup of $\GL_{n+1}$ with standard Levi factor $\GL_{k+1} \times \left(\prod_{l=m}^1 \GL_{n_{n,i_l}} \right) $. Let $Q \in \cF_{\RS}$ be the element corresponding to $(Q^{\std}_{n+1},1)$ under the bijection of Corollary~\ref{cor:param_RS}. Let $w=(w_n,w_{n+1}) \in W(P;Q^{\std})$ be the only element such that $w_n(i_l)=m_n-l+1$ and $w_{n+1}(j_l)=m_{n+1}-l+1$ for $1 \leq l \leq m$. Then for every $\varphi \in \cA_{P,\pi}(G)$ we have
    \begin{equation}
    \label{eq:residue_easy_-}
        \underset{\Lambda_m \leftarrow \Lambda_1}{\Res} \cP(\varphi,\lambda)=\cP^Q(\varphi,\lambda,w).
    \end{equation}
\end{enumerate}
\end{prop}

\begin{proof}
    We begin with the first case \eqref{eq:residue_easy_+} under the assumption that $m=1$. By the functional equation of Eisenstein series \cite[Theorem~2.3]{BL}, we have for $\lambda$ in general position
    \begin{equation*}
        \cP(\varphi,\lambda)=\cP(M(w,\lambda)\varphi,w\lambda).
    \end{equation*}
    Note that $M(w,\lambda)\varphi$ is regular for $\lambda$ in general position in the affine hyperplane $\lambda_{n,i}+\lambda_{{n+1},j}+\frac{1}{2}=0$. It follows from Corollary~\ref{cor:P_regular} that it is enough to prove \eqref{eq:residue_easy_+} for $i=j=1$ and $w=1$. 
    
    We now assume that $i=j=1$ and $w=1$. We set $\Lambda(\lambda)=\lambda_{n,1}+\lambda_{n+1,1}+\frac{1}{2}$. By the first statement of Proposition~\ref{prop:unfold_periods}, we have for any $T$ sufficiently positive
    \begin{equation}
        \label{eq:reversed_MS}
        \cP(\varphi,\lambda)=\sum_{R} \varepsilon_{R} \sum_{w \in W(P;R^{\std})} \cP^{T,R}(\varphi,\lambda,w) \cdot \frac{\exp(\langle w_R^\std w\lambda+\underline{\rho}_R,T_R \rangle)}{\hat{\theta}_R(w_R^\std w \lambda+\underline{\rho}_R)}.
    \end{equation}
    By the description of $\hat{\theta}_R$ in \S\ref{subsubsec:RS_coord} and \S\ref{sec:pol_exp_Fourier}, we see that $\Lambda$ divides $\hat{\theta}_R(w_R^\std w \lambda+\underline{\rho}_R)$ if and only if $\bfM_{R,+}$ is of the form $(\GL_{n_{n,1}} \times \hdots)$ and $w_n(1)=1$, $w_{n+1}(1)=1$. In that case, denote by $\hat{\theta}_R^\natural$ the quotient of $\hat{\theta}_R(w_R^\std w \lambda+\underline{\rho}_R)$ by $\Lambda$. By \cite[Theorem~2.3]{BL}, the generalized Eisenstein series $E^{R^\std}(M(w,\lambda)\varphi,w\lambda)$ appearing in \eqref{eq:reversed_MS} are regular along our affine hyperplane. By Theorem~\ref{thm:truncation_operator} and the continuity of Eisenstein series from \cite[Theorem~2.2]{Lap}, so are their truncated periods so that
    \begin{equation}
        \label{eq:first_residue}
        \underset{\Lambda}{\Res} \; \cP(\varphi,\lambda)=\sum_{(R,w)} \varepsilon_R\cP^{T,R}(\varphi,\lambda,w) \cdot \frac{\exp(\langle w_R^\std w\lambda+\underline{\rho}_R,T_R \rangle)}{\hat{\theta}^\natural_R(w_R^\std w \lambda+\underline{\rho}_R)},
    \end{equation}
    where $(R,w)$ ranges in the couples we just described. But these are exactly the pairs with $R \subset Q$ a Rankin--Selberg parabolic subgroup of $G$ and $w \in W^{Q^{\std}}(Q^{\std};R^{\std})$. Moreover, because of the volume computation of \eqref{eq:basis_volume}, $\hat{\theta}^\natural_R(w_R^\std w \lambda+\underline{\rho}_R)$ is equal to the restriction to $\Lambda^{-1}(\{0\})$ of $\hat{\theta}^Q_R(w_R^\std w \lambda+\underline{\rho}_R)$. Because $\varepsilon_R^Q=-\varepsilon_R$, it follows from Proposition~\ref{prop:unfold_periods} that 
    \begin{equation}
    \label{eq:k=1_case}
        \underset{\Lambda}{\Res} \; \cP(\varphi,\lambda)=-\cP^Q(\varphi,\lambda).
    \end{equation}

    To prove \eqref{eq:residue_easy_+} if $m \geq 1$, it remains to do induction on $k$ thanks to \eqref{eq:k=1_case} and to use parabolic descent by Lemma~\ref{lem:parabolic_descent}.

    The second case is exactly the same, the only difference being a minus sign which appears because of the signs in~\ref{eq:coweight_defi}.
\end{proof}

\section{Residues of global Zeta integrals}
\label{sec:RS_non_tempered}

Let $\Pi$ be an automorphic representation of Arthur type of $G$, realized as a quotient of $\cA_{P_\pi,\pi,-\nu_\pi}(G)$ as in \S\ref{subsubsec:residual_blocks}. In this section, we build a $H(\bA)$-invariant linear form $\cP_\pi$ on $\cA_{P_\pi,\pi,-\nu_\pi}(G)$, and show that it factors through $\Pi$ if $\Pi$ is in general position (see Theorem~\ref{thm:non_tempered_periods}). Because it is a residue of a global Zeta integral, it admits an Euler product decomposition. We also show in \S\ref{subsubsec:residue_free} that $\cP_\pi$ is a regularized period along a degeneracy of $[H]$. 

\subsection{Periods for relevant inducing pairs}
\label{sec:relevant_discrete}

Because $\cP_\pi$ is defined on an induction, we prefer to keep track of the choice of the inducing data for $\Pi$. We translate the relevance condition of \S\ref{subsubsec:RS_for_Arthyr} for Arthur parameters in this language.

\subsubsection{The relevance condition}
\label{subsubsec:relevant_inducing}
Let $\Pi_H$ be the set of \emph{relevant inducing pairs}. This is the set of pairs $(P,\pi)$ where $P$ is a standard parabolic subgroup of $G$ whose standard Levi factor $M_P=M_{P,n} \times M_{P,n+1}$ admits a decomposition 
 \begin{align}
    \label{eq:MP_n}
        M_{P,n}&=\prod_{i=1}^{m_1} \GL_{d(1,i)r(1,i)} \times \prod_{i=1}^{m_2} \GL_{(d(2,i)-1) r(2,i)} , \\
    \label{eq:MP_n+1}
        M_{P,{n+1}}&=\prod_{i=1}^{m_1} \GL_{(d(1,i)-1)r(1,i)} \times \prod_{i=1}^{m_2} \GL_{d(2,i) r(2,i)};
    \end{align}
and $\pi \in \Pi_\disc(M_P)$ is a discrete automorphic representation (with trivial central character on $A_P^\infty$) and such that, with respect to \eqref{eq:MP_n} and \eqref{eq:MP_n+1}, $\pi=\pi_n \boxtimes \pi_{n+1}$ decomposes as
     \begin{equation}
    \label{eq:pi_n}
        \pi_n=\boxtimes_{i=1}^{m_1} \pi_{1,i} \boxtimes_{i=1}^{m_2} \pi_{2,i}^{-,\vee}, \quad \pi_{n+1}=\boxtimes_{i=1}^{m_1} \pi_{1,i}^{-,\vee} \boxtimes_{i=1}^{m_2} \pi_{2,i},
    \end{equation}    
    where we set $\Speh(\sigma,d)^-=\Speh(\sigma,d-1)$, and we have $\pi_{1,i}=\Speh(\sigma_{1,i},d(1,i))$, $\pi_{2,i}=\Speh(\sigma_{2,i},d(2,i))$ for some representations $\sigma_{1,i} \in \Pi_\cusp(\GL_{r(1,i)})$ and $\sigma_{2,i} \in \Pi_\cusp(\GL_{r(2,i)})$. By convention $\Speh(\sigma,0)$ is the trivial representation of the trivial group.

Let $(P,\pi) \in \Pi_H$. With the choices of coordinates made in \S\ref{subsubsec:blocks}, 
$\fa_{P}^*$ is realized as a subspace
\begin{equation}
\label{eq:a_P_explicit coordinates}
    \fa_{P}^* \subset \left(\rr^{m_1} \times \rr^{m_2}\right) \times \left(\times \rr^{m_1} \times \rr^{m_2} \right).
\end{equation}
A similar decomposition holds for $\fa_{P,\cc}^*$. If $\lambda \in \fa_{P}^*$, we write 
\begin{equation}
\label{eq:notation_P}
    \lambda=\left( (\lambda(1)_n, \lambda(2)_n),(\lambda(1)_{n+1}, \lambda(2)_{n+1}) \right)
\end{equation}
according to this decomposition. Note that $\lambda(2)_{n,i}=0$ if $d(2,i)=1$, and $\lambda(1)_{n+1,i}=0$ if $d(1,i)=1$.

We define the anti-diagonal subspace $\fa_\pi^* \subset \fa_P^*$ to be
\begin{equation}
\label{eq:a_pi_defi}
    \fa_{\pi}^*=\left\{ \lambda \in \fa_{P}^* \; \middle| \; \begin{array}{ll}
      
        \lambda(1)_{n,i}=-\lambda(1)_{n+1,i}, & 1 \leq i \leq m_1, \; \text{if } d(1,i) \neq 1, \\
        \lambda(2)_{n,i}=-\lambda(2)_{n+1,i}, & 1 \leq i \leq m_2, \; \text{if } d(2,i) \neq 1,
    \end{array} \right\}.
\end{equation}
We have an isomorphism 
\begin{equation}
\label{eq:coord_a_pi}
    \lambda \in \fa_{\pi}^* \mapsto \left( \lambda(1), \lambda(2) \right):=\left( \lambda(1)_n, \lambda(2)_{n+1} \right) \in \rr^{m_1} \times \rr^{m_2}.
\end{equation}
We also have an anti-diagonal subspace $\fa_{\pi,\cc}^* \subset \fa_{P,\cc}^*$ defined by the same equations. Note that $i\fa_{\pi}^{*}$ is exactly the subspace of $\lambda \in \fa_{P,\cc}^*$ such that $(P,\pi_\lambda) \in \Pi_H$ if we lift the requirement that the central character is trivial on $A_P^\infty$, and ask that it is unitary instead.

\subsubsection{Singular affine linear planes}
\label{subsubsec:linear_forms}

Let $(P,\pi) \in \Pi_H$ be a relevant inducing pair. Let $\sigma_\pi$ and $\nu_\pi$ be respectively the cuspidal automorphic representation of $M_{P_\pi}$ and the element of $\fa_{P_\pi}^*$ such that $\cA_{P,\pi}(G)$ is obtained by taking residues of Eisenstein series on the induction $\cA_{P_\pi,\sigma_\pi,-\nu_\pi}(G)$ (see \S\ref{subsubsec:residual_blocks}). For $\lambda \in \fa_{P_\pi,\cc}^*$ in general position, we have the global Zeta function $Z_{\sigma_\pi}(\cdot,\lambda)$ from \S\ref{subsubsec:global_zeta}. We can identify $\fa_{\pi,\cc}^*-\nu_\pi$ with a subspace of $\fa_{P_\pi,\cc}^*$. It is contained in a finite union of singularities of $Z_{\sigma_\pi}(\cdot,\lambda)$ which are all affine hyperplanes. We now describe the corresponding affine linear forms.

By assumption, we can write $M_{P_\pi}$ the standard Levi factor of $P_\pi$ as follows:
  \begin{align}
        &M_{P_\pi,n}=\prod_{i=1}^{m_1} (\GL_{r(1,i)})^{d(1,i)} \times \prod_{i=1}^{m_2} (\GL_{r(2,i)})^{d(2,i)-1} , \label{eq:M_P_pi_n} \\
        &M_{P_\pi,n+1}=\prod_{i=1}^{m_1} (\GL_{r(1,i)})^{d(1,i)-1} \times \prod_{i=1}^{m_2} (\GL_{r(2,i)})^{d(2,i)} . \label{eq:M_P_pi_n+1}
    \end{align}
    This yields an identification
    \begin{equation*}
        \fa_{P_\pi,\cc}^* = \left(\prod_{i=1}^{m_1} \cc^{d(1,i)} \times \prod_{i=1}^{m_2} \cc^{d(2,i)-1}\right) \times \left(\prod_{i=1}^{m_1} \cc^{d(1,i)-1} \times \prod_{i=1}^{m_2} \cc^{d(2,i)} \right).
    \end{equation*}
    We will from now on write the coordinates of any $\lambda \in \fa_{P_\pi,\cc}^*$ with respect to these identifications. More precisely, if $\lambda=(\lambda_n,\lambda_{n+1}) \in \fa_{P_\pi,\cc}^*$ we write $\lambda_n$ as
    \begin{equation}
        \label{eq:lambda_coordinates}
        ( \lambda(1,1)_n, \hdots, \lambda(1,m_1)_n,  \lambda(2,1)_n, \hdots ,\lambda(2,m_2)_n)
    \end{equation}
    where for example $\lambda(1,1)_n \in \cc^{d(1,1)}$ has coordinates $\lambda(1,1)_{n,1}, \hdots ,\lambda(1,1)_{n,d(1,1)}$. The same applies for $\lambda_{n+1}$. 

    We now build a set $L_+$ of affine linear forms on $\fa_{P_\pi,\cc}^*$. They are defined as
    \begin{equation}
        \left\{
            \begin{array}{ll}
                \Lambda_+(1,i,j)(\lambda)=-(\lambda(1,i)_{n,d(1,i)-j+1}+\lambda(1,i)_{n+1,j}+1/2), &  \quad  \left\{\begin{array}{ll}
                    1 \leq i \leq m_1,  \\
                    1 \leq j \leq d(1,i)-1,
                \end{array}\right.   \\
                \Lambda_+(2,i,j)(\lambda)=-(\lambda(2,i)_{n,j}+\lambda(2,i)_{n+1,d({2,i})-j+1}+1/2), &  \quad \left\{\begin{array}{ll}
                    1 \leq i \leq m_2,   \\
                    1 \leq j \leq d(2,i)-1.
                \end{array}\right.   
            \end{array}
        \right.
        \label{eq:Lambda}
    \end{equation}
    Set $\cH_+=\cap_{\Lambda_+ \in L_+} \cH_{\Lambda_+}$. We also have the set $L_-$ of linear forms defined by the following equations.
    \begin{equation}
        \left\{
            \begin{array}{ll}
                \Lambda_-(1,i,j)(\lambda)=\lambda(1,i)_{n,d(1,i)-j}+\lambda(1,i)_{n+1,j}-1/2, &  \quad  \left\{\begin{array}{ll}
                    1 \leq i \leq m_1,   \\
                    1 \leq j \leq d(1,i)-1,
                \end{array}\right.   \\
                \Lambda_-(2,i,j)(\lambda)=\lambda(2,i)_{n,j}+\lambda(2,i)_{n+1,d(2,i)-j}-1/2, &  \quad  \left\{\begin{array}{ll}
                    1 \leq i \leq m_2,   \\
                    1 \leq j \leq d(2,i)-1.
                \end{array}\right.  
            \end{array}
        \right.
                \label{eq:Lambda'}
    \end{equation}
    Set $\cH_-=\cap_{\Lambda_- \in L_-} \cH_{\Lambda_-}$. If we restrict to $\lambda \in \cH_+$, we have (for $i$ and $j$ in the suitable range)
    \begin{equation}
    \label{eq:L'_alternative}
        \left\{
            \begin{array}{l}
                \Lambda_-(1,i,j)(\lambda)=\lambda(1,i)_{n,d(1,i)-j}-\lambda(1,i)_{n,d(1,i)-j+1}-1=\lambda(1,i)_{n+1,j}-\lambda(1,i)_{n+1,j+1}-1,\\
                \Lambda_-(2,i,j)(\lambda)=\lambda(2,i)_{n,j}-\lambda(2,i)_{n,j+1}-1=\lambda(2,i)_{n+1,d(2,i)-j}-\lambda(2,i)_{n+1,d(2,i)-j+1}-1.
            \end{array}
        \right.
    \end{equation}
    It follows that
    
    \begin{equation}
    \label{eq:intersection_hyperplanes}
        \cH_+ \cap \cH_-=\fa_{\pi,\cc}^*-\nu_{\pi}.
    \end{equation}

    \begin{lem}
        \label{lem:iterated_residue}
        Let $\phi \in \cA_{P_\pi,\sigma_\pi,-\nu_\pi}(G)$. The residue $\underset{L_+ \cup L_-}{\Res} \; Z_{\sigma_\pi}(\phi,\lambda)$ is a well-defined meromorphic function on $\fa_{\pi,\cc}^*-\nu_{\pi}$ and can be computed with respect to any order on the set $L_+ \cup L_-$.
    \end{lem}

    \begin{proof}
        We can assume that $\phi=\otimes_v' \phi_v$ is factorizable. By \eqref{eq:global_facto_zeta}, we have the Euler product expansion for $\tS$ large enough
        \begin{equation*}
            Z_{\sigma_\pi}(\phi,\lambda)=\frac{L(\lambda+1/2,\sigma_{\pi,n} \times \sigma_{\pi,n+1})}{b(\lambda,\sigma_\pi)} \prod_{v \in \tS} Z^\sharp_{\sigma_\pi,v}(\phi_v,\lambda).
        \end{equation*}
        Because $-\nu_\pi \in\fa_{P_\pi}^{P,*,+}$, we see using the description of $\fa_\pi^*$ in \eqref{eq:a_pi_defi}, that for every $\alpha \in \Sigma_{P_\pi}$ the map $\lambda \in \fa_{\pi,\cc}^*-\nu_\pi \mapsto \langle \lambda,\alpha^\vee \rangle$ is either non-constant, either constant equal to a positive integer. By Lemma~\ref{lem:local_zeta_reg}, we conclude that $\fa_{\pi,\cc}^*-\nu_{\pi}$ is not contained in any of the singularities of the product of the local terms, which all come from $b(\lambda,\sigma_{\pi,v})$. By the properties of Rankin--Selberg $L$-functions, the affine linear forms in $L_+ \cup L_-$ direct all the singularities of the quotient of global $L$-functions containing $\fa_{\pi,\cc}^*-\nu_{\pi}$, which are all affine hyperplanes with multiplicity one coming from $L(\lambda+1/2,\sigma_{\pi,n} \times \sigma_{\pi,n+1})$. Therefore, Lemma~\ref{lem:residues_commute} concludes.
    \end{proof}

\subsubsection{Residues of Zeta functions and main result}

\label{subsection:extension_RS}

Let $(P,\pi) \in \Pi_H$ be a relevant inducing pair. We denote by $\Res \; Z_{\sigma_\pi}(\cdot,\lambda)$ the iterated residue obtained in Lemma~\ref{lem:iterated_residue}. For $\lambda \in \fa_{\pi,\cc}^*$ we set, with the coordinates of \eqref{eq:coord_a_pi},
\begin{equation*}
     \cL(\lambda,\pi)=\frac{\displaystyle \prod_{i,j} L\left(\lambda(1)_i +\lambda(2)_j + \frac{d(1,i)-d(2,j)+1}{2}, \sigma_{1,i} \times \sigma_{2,j} \right)}{\displaystyle \prod_{k=1}^2 \prod_{1 \leq i<j \leq m_k} L\left(\lambda(k)_i-\lambda(k)_j+\frac{d(k,i)+d(k,j)}{2}, \sigma_{k,i} \times \sigma_{k,j}^\vee \right)}.
\end{equation*}

\begin{theorem}
\label{thm:non_tempered_periods}
The following properties hold.
\begin{enumerate}
    \item For $\lambda \in \fa_{\pi,\cc}^*$ in general position, $\Res \; Z_{\sigma_\pi}(\cdot,\lambda-\nu_\pi)$ factors through $\cA_{P_\pi,\sigma_{\pi},\lambda-\nu_\pi}(G) \twoheadrightarrow \cA_{P,\pi,\lambda}(G)$ and yields a $H(\bA)$-invariant linear form $\cP_\pi(\cdot,\lambda)$ on this induction.
    \item There exists a regular never vanishing function $\varepsilon$ (which is a product of epsilon factors and of special values of $L$-functions) such that for $\lambda \in \fa_{\pi,\cc}^*$ in general position and $\varphi \in \cA_{P,\pi}(G)$ with $\varphi=E^{P,*}(\phi,-\nu_\pi)$ and $\phi=\otimes_v \phi_v$ we have
         \begin{equation*}
        \cP_\pi(\varphi,\lambda)=\varepsilon(\lambda)\cL(\lambda,\pi) \prod_{v \in \tS} Z_{\sigma_{\pi,v}}^\natural(\phi_v,\lambda-\nu_\pi),
    \end{equation*}
    where $\tS$ is any sufficiently large finite set of places of $F$.
\end{enumerate}
\end{theorem}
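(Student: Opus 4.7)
The plan is to compute the iterated residue via the identification of the Zeta integral with the regularized period. By Proposition~\ref{prop:Zydor_RS} (applicable since $\sigma_\pi$ is cuspidal), we have $Z_{\sigma_\pi}(\phi, \lambda') = \cP(\phi, \lambda')$ for $\lambda'$ in general position. Using the block decomposition \eqref{eq:M_P_pi_n}--\eqref{eq:M_P_pi_n+1} together with \eqref{eq:pi_n}, one checks that the indices paired by each affine form $\Lambda_+(k,i,j) \in L_+$ (resp.~$\Lambda_-(k,i,j) \in L_-$) correspond to mutually contragredient cuspidal blocks of $\sigma_\pi$, so that Proposition~\ref{prop:easy_residue}(1) (resp.~(2)) applies; moreover, the distinctness of the enumerating indices needed in that proposition is automatic once one unfolds the block structure of $M_{P_\pi}$. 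By Lemma~\ref{lem:iterated_residue} the iterated residue is order independent, and we choose to apply the $L_+$ residues first.

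A single application of Proposition~\ref{prop:easy_residue}(1) to all of $L_+$ produces a Rankin--Selberg parabolic $Q_+ \in \cF_{\RS}$ and a Weyl element $w_+ \in {}_{Q_+^{\std}} W_{P_\pi}$ with
\[
\underset{L_+}{\Res}\, Z_{\sigma_\pi}(\phi, \lambda - \nu_\pi) = (-1)^{|L_+|}\,\cP^{Q_+}(\phi, \lambda - \nu_\pi, w_+).
\]
Parabolic descent (Lemma~\ref{lem:parabolic_descent}) expresses this as a product of regularized diagonal Arthur periods on the blocks of $\bfM_{Q_+}^2$ (computing Petersson inner products by Remark~\ref{rem:diagonal_Arthur}) composed with a smaller Rankin--Selberg period on $\cM_{Q_+}$. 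The remaining residues $L_-$ all come from the inner Rankin--Selberg factor, and a second application of Proposition~\ref{prop:easy_residue}(2) inside $\cM_{Q_+}$ yields a Rankin--Selberg parabolic $Q_\pi \subset Q_+$ and a final Weyl element $w_\pi$ such that, up to sign, $\Res\, Z_{\sigma_\pi}(\phi, \lambda - \nu_\pi) = \pm\, \cP^{Q_\pi}(\phi, \lambda - \nu_\pi, w_\pi)$. By construction $w_\pi$ contains $w_\pi^*$ from \S\ref{subsubsec:residual_blocks} as the factor acting on the residual component, so that the intertwining operator $M(w_\pi, \lambda - \nu_\pi)$ arising in the parabolic descent on $\cM_{Q_\pi}$ coincides, up to regular invertible factors absorbed in the Petersson components, with $M(w_\pi^*, \lambda - \nu_\pi)$. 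Corollary~\ref{cor:P_regular} then lets us replace it by the regularized operator $M^*(w_\pi^*, \lambda-\nu_\pi) = L_{\pi,\mathrm{res}}(\lambda-\nu_\pi) M(w_\pi^*, \lambda-\nu_\pi)$, whose kernel is identified with $\Ker E^{P,*}(\cdot, -\nu_\pi)$ by Corollary~\ref{cor:same_quotient}. This gives the factorization through $\cA_{P_\pi, \sigma_\pi, \lambda-\nu_\pi}(G) \twoheadrightarrow \cA_{P, \pi, \lambda}(G)$ and defines $\cP_\pi(\cdot, \lambda)$; its $H(\bA)$-invariance is inherited from $\cP^{Q_\pi}$ via Theorem~\ref{thm:reg}.

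For (2), combine the above with the Euler product \eqref{eq:global_facto_zeta}:
\[
Z_{\sigma_\pi}(\phi, \lambda - \nu_\pi) = \frac{L(\lambda - \nu_\pi + \tfrac{1}{2},\, \sigma_{\pi,n} \times \sigma_{\pi,n+1})}{b(\lambda - \nu_\pi,\, \sigma_\pi)} \prod_{v \in \tS} Z^{\natural}_{\sigma_{\pi,v}}(\phi_v, \lambda - \nu_\pi).
\]
By Lemma~\ref{lem:local_zeta_reg} each local $Z^\natural_{\sigma_{\pi,v}}$ is regular on a neighborhood of $\fa_{\pi,\cc}^* - \nu_\pi$ (which lies in $\overline{\fa_{P_\pi}^{*,+}}$), so the iterated residue acts only on the global $L$-function quotient. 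Using the Speh product formula \eqref{eq:discrete_L} and the simple pole structure from \S\ref{subsubsec:scalar_factor}, each affine form in $L_+ \cup L_-$ matches a unique simple pole of the numerator coming from a factor of type $L(s, \sigma_{a,i} \times \sigma_{a,i}^\vee)$; evaluating the residues, cancelling against $b(\lambda - \nu_\pi, \sigma_\pi)$, and comparing with \eqref{eq:big_L_factor} yields
\[
\Res\, \frac{L(\lambda - \nu_\pi + \tfrac{1}{2},\, \sigma_{\pi,n} \times \sigma_{\pi,n+1})}{b(\lambda - \nu_\pi,\, \sigma_\pi)} = \varepsilon(\lambda)\,\cL(\lambda,\pi),
\]
where $\varepsilon(\lambda)$ is a product of residues $L^*(1, \sigma_{a,i} \times \sigma_{a,i}^\vee)$ and values of Rankin--Selberg $L$-functions at regular non-vanishing arguments.

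The main obstacle is the combinatorial bookkeeping in the first half: identifying the precise pair $(Q_\pi, w_\pi)$ resulting from the two applications of Proposition~\ref{prop:easy_residue} and verifying that the intertwining operator produced by parabolic descent on $\cM_{Q_\pi}$ is indeed $M(w_\pi^*, \cdot)$, so that Corollary~\ref{cor:same_quotient} delivers the factorization through the Langlands quotient. The $L$-function bookkeeping for (2) is direct but lengthy once the $L_+ \cup L_-$-to-$(\sigma_{a,i}, \sigma_{a,i}^\vee)$ matching is written out.
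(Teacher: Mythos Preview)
Your $L_+$ step is correct and matches the paper. The $L_-$ step has a genuine gap: the claim that ``the remaining residues $L_-$ all come from the inner Rankin--Selberg factor'' on $\cM_{Q_+}$ is false. After restricting to $\cH_+$, each $\Lambda_-\in L_-$ becomes (see \eqref{eq:L'_alternative}) a difference of consecutive coordinates on the \emph{same} side minus~$1$, e.g.\ $\lambda(1,i)_{n,d(1,i)-j}-\lambda(1,i)_{n,d(1,i)-j+1}-1$ --- precisely the type of factor appearing in $L_{\pi,\mathrm{res}}$ of \eqref{eq:residual_L_pi}. Thus on $\cH_+$ the $L_-$-poles of $\cP^{P_{\res}}(\phi,\lambda,w)$ are poles of the intertwining operator $M(w,\lambda)$, not of the $\hat\theta$-denominators that Proposition~\ref{prop:easy_residue} detects; indeed the proof of Lemma~\ref{lem:second_residue_n+1} verifies that the relevant $\hat\theta_Q^{P_{\res}}$ are nonzero on $\cH_+\cap\cH_-$ in general position. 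A second application of Proposition~\ref{prop:easy_residue}(2) inside $\cM_{Q_+}$ therefore sees no poles and produces nothing.

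What the paper does instead: compute $\Res_{L_+}$ in \emph{two} different orders, obtaining $\cP^{P_{\res}}(\phi,\lambda,w_1 w_{\pi,n+1}^*)$ and $\cP^{P_{\res}}(\phi,\lambda,w_2 w_{\pi,n}^*)$ respectively (\S\ref{subsubsec:residues_cL}). For each, Lemma~\ref{lem:w_1_action} shows that $\prod_{\Lambda_-\in L_-}\Lambda_-$ is exactly what is needed to regularize $M(w_1 w_{\pi,n+1}^*,\lambda)$ (resp.\ $M(w_2 w_{\pi,n}^*,\lambda)$), and Corollary~\ref{cor:P_regular} pushes this regularization inside the period, yielding $\cP^{P_{\res}}\bigl(M^*(w_1,w_{\pi,n+1}^*\lambda)M^*(w_{\pi,n+1}^*,\lambda)\phi,\ldots\bigr)$ (Lemma~\ref{lem:second_residue_n+1}) and the analogue with $w_{\pi,n}^*$ (Lemma~\ref{lem:second_residue_n}). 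Corollary~\ref{cor:same_quotient} then gives factorization through $M^*(w_{\pi,n+1}^*,\cdot)$ from the first expression and through $M^*(w_{\pi,n}^*,\cdot)$ from the second; both are required, since $w_\pi^*=(w_{\pi,n}^*,w_{\pi,n+1}^*)$ and a single ordering only exhibits one component. Finally, $H(\bA)$-invariance is inherited as a residue of the invariant form $\cP=\cP^G$, not from $\cP^{Q_\pi}$ (Theorem~\ref{thm:reg} asserts invariance only for $Q=G$).
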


\begin{rem}
    The possible singularities of $\cP_{\pi}(\varphi,\lambda)$ come either from $\cL(\lambda,\pi)$, either from the $Z_{\sigma_\pi,v}^\natural$. It turns out that the latter are regular for $\lambda \in i \fa_\pi^*$. This will be proved in Theorem~\ref{thm:local_GGP_explicit}.
\end{rem}

\subsection{Computation of the residues}

We now compute $\Res \; Z_{\sigma_\pi}(\cdot,\lambda-\nu_\pi)$ by varying the order of the linear forms in $L_+ \cup L_-$. Because we know that $ Z_{\sigma_\pi}(\cdot,\lambda-\nu_\pi)$ is $\cP(\cdot,\lambda)$ (Proposition~\ref{prop:Zydor_RS}), we can use Proposition~\ref{prop:easy_residue}.

    \subsubsection{Residues along $L_+$}
    \label{subsubsec:residues_cL}

    Let $\phi \in \cA_{P_\pi,\sigma_\pi}(G)$. By Lemma~\ref{lem:iterated_residue}, we have the meromorphic function $\underset{L_+}{\Res} \; \cP(\phi,\lambda)$ on $\cH_+$. We compute it using Proposition~\ref{prop:easy_residue} by varying the order in $L_+$.

    We start by taking the order
    \begin{equation}
         \left( \xleftarrow[i=1]{m_2} \xleftarrow[j=1]{d(2,i)-1} \Lambda_+(2,i,j) \right)\leftarrow
         \left( \xleftarrow[i=1]{m_1} \xleftarrow[j=d(1,i)-1]{1} \Lambda_+(1,i,j) \right)\label{eq:first_order}
    \end{equation}
    where for example the notation $\xleftarrow[i=1]{m_1} \xleftarrow[j=d(1,i)-1]{1} \Lambda_+(1,i,j)$ means
    \begin{equation*}
     \Lambda_+(1,m_1,1) \leftarrow \hdots \leftarrow \Lambda_+(1,2,d(1,2)-1) \leftarrow \Lambda_+(1,1,1) \leftarrow \hdots \leftarrow \Lambda_+(1,1,d(1,1)-1)
    \end{equation*}
    By Proposition~\ref{prop:easy_residue}, we have an explicit description of $\underset{L_+}{\Res} \; \cP(\varphi,\lambda)$. Set 
    \begin{equation}
    \label{eq:defi_k}
        k=\sum_{i=1}^{m_1} r(1,i)=\sum_{i=1}^{m_2} r(2,i)-1.
    \end{equation}
    Let $P_{\res,n+1}^\std$ be the standard parabolic subgroup of $\GL_{n+1}$ with standard Levi factor
    \begin{equation}
    \label{eq:Q_cL_defi}
        M_{P_{\res}^\std,n+1}=\prod_{i=1}^{m_1} (\GL_{r(1,i)})^{d(1,i)-1}  \prod_{i=1}^{m_2} (\GL_{r(2,i)})^{d(2,i)-1} \times \GL_{k+1}.
    \end{equation}
    To ease notation, set
     \begin{equation}
    \label{eq:Ds}
        d(1,\leq i)=\sum_{j=1}^{i} (d(1,j)-1),  \quad 
            d(2,\leq i)=\sum_{j=1}^{i} (d(2,j)-1), \quad D=d(1,\leq m_1) + d(2,\leq m_2).
    \end{equation}
    Let $P_{\res}$ be the Rankin--Selberg parabolic subgroup of $G$ associated to $(P^\std_{\res,n+1},D+1)$ by the bijection of Corollary~\ref{cor:param_RS}. Note that it is standard. We simply write $\bfM_{\res}^2$ and $\cM_{\res}$ for $\bfM_{P_{\res}}^2$ and $\cM_{P_{\res}}$ respectively. We now describe the element $w$ appearing in \eqref{eq:residue_easy_+}. It belongs to $W(P_\pi;P_{\res})$. We first build an element $w_1=(w_{1,n},w_{1,n+1}) \in W(P_\pi)$. Let $m_{\pi,n}$ (resp. $m_{\pi,n+1}$) be the number of blocks of $M_{P_\pi,n}$ (resp. $M_{P_\pi,n+1}$) so that we identify $w_{1,n}$ with a permutation in $\fS(m_{\pi,n})$ (resp. $w_{1,n+1}$ with a permutation in $\fS(m_{\pi,n+1})$). Then $w_1$ is a product of cycles. More precisely, for every $1 \leq i \leq m_1$ let $w_{n,i} \in \fS(m_{\pi,n})$ be the cycle
    \begin{equation*}
        w_{n,i}=\left(d(1,\leq i-1) + 1 \quad D+m_1 \quad D+m_1-1 \quad \hdots \quad  d(1,\leq i-1) +2 \right)
    \end{equation*}
    and let for $1 \leq i \leq m_2$ let $w_{n+1,i} \in \fS(m_{\pi,n+1})$ be the cycle following the same pattern
    \begin{equation*}
        w_{n+1,i}=\left( d(1,\leq m_1) +d(2,\leq i) + 1  \quad D+m_2 \quad \hdots  \quad   d(1,\leq m_1) +d(2,\leq i) + 2 \right)
    \end{equation*}
    Then we set
    \begin{equation}
    \label{eq:w_cL_defi}
        w_{1,n}=w_{n,m_1} \hdots w_{n,1}, \quad w_{1,n+1}=w_{n+1,m_2} \hdots w_{n+1,1}, \quad w_1=(w_{1,n},w_{1,n+1}).
    \end{equation}
    More concretely, $w_{1,n}$ sends $M_{P_\pi,n}$ to 
   \begin{equation}
    \label{eq:w_1_action_1}
    \prod_{i=1}^{m_1} (\GL_{r(1,i)})^{d(1,i)-1} \times \prod_{i=1}^{m_2} (\GL_{r(2,i)})^{d(2,i)-1} \times \prod_{i=1}^{m_1} \GL_{r(1,i)},
   \end{equation}
   by sending the first $\GL_{r(1,i)}$ in each product $(\GL_{r(1,i)})^{d(1,i)}$ to the bottom-right corner. On the other hand, $w_{1,n+1}$ sends $M_{P_\pi,n+1}$ to 
    \begin{equation}
        \label{eq:w_1_action_2}
    \prod_{i=1}^{m_1} (\GL_{r(1,i)})^{d(1,i)-1} \times \prod_{i=1}^{m_2} (\GL_{r(2,i)})^{d(2,i)-1} \times \prod_{i=1}^{m_2} \GL_{r(2,i)},
   \end{equation}
   this time by sending the last block of each $(\GL_{r(2,i)})^{d(2,i)}$ to the corner. By elementary computations, the element $w$ which appears in \eqref{eq:residue_easy_+} is $w_1 w_{\pi,n+1}^*$, where $w_{\pi,n+1}^* \in \GL_{n+1}$ is the element defined in \S\ref{subsubsec:residual_blocks} such that $\cA_{P_{n+1},\pi_{n+1}}(\GL_{n+1})$ is the image of $M^*(w_{\pi,n+1},-\nu_{\pi,n+1})$. By Proposition~\ref{prop:easy_residue}, we have for $\lambda \in \cH_+$ in general position
   \begin{equation}
   \label{eq:residue_L_1}
       \underset{L_+}{\Res} \; \cP(\phi,\lambda)=\cP^{P_{\res}}(\phi,\lambda,w_1 w_{\pi,n+1}^*).
   \end{equation}

    We now compute $\underset{L_+}{\Res} \; \cP(\phi,\lambda)$ by choosing a different order on $L_+$. Consider 
     \begin{equation}
         \left( \xleftarrow[i=1]{m_2} \xleftarrow[j=d(2,i)-1]{1} \Lambda_+(2,i,j) \right)\leftarrow
         \left( \xleftarrow[i=1]{m_1} \xleftarrow[j=1]{d(1,i)-1} \Lambda_+(1,i,j) \right) \label{eq:second_order}
    \end{equation}
    For every $1 \leq i \leq m_1$, let $w_{n,i}' \in \fS(m_{\pi,n})$ be the cycle 
    \begin{equation*}
        w_{n,i}'=\left( d(1,\leq i) + 1 \quad D+m_1 \quad D+m_1-1 \quad \hdots \quad  d(1,\leq i) + 2 \right)
    \end{equation*}
    and for $1 \leq i \leq m_2$, set
    \begin{equation*}
        w_{n+1,i}'=\left(  d(1,\leq m_1) +d(2,\leq i-1) + 1  \quad D+m_2 \quad \hdots \quad  d(1,\leq m_1) +d(2,\leq i-1) + 2 \right).
    \end{equation*}
    Write  
    \begin{equation}
    \label{eq:w_cL_'_defi}
        w_{2,n}=w_{n,m_1}' \hdots w_{n,1}', \quad w_{2,n+1}=w_{n+1,m_2}' \hdots w_{n+1,1}', \quad w_2=(w_{2,n},w_{2,n+1}).
    \end{equation}
    The action of $w_2$ can be described as in \eqref{eq:w_1_action_1} and \eqref{eq:w_1_action_2}. In particular, $w_1 w_{\pi,n+1}^* .P_\pi=w_2 w_{\pi,n}^* .P_\pi$. Another application of Proposition~\ref{prop:easy_residue} gives for $\lambda \in \cH_+$ in general position
    \begin{equation}
    \label{eq:residue_L_2}
       \underset{L_+}{\Res} \; \cP(\phi,\lambda)=\cP^{P_{\res}}(\phi,\lambda,w_2 w_{\pi,n}^*).
   \end{equation}

    \subsubsection{Residues along $L_-$} 
    \label{subsubsec:residues_cL'}

    We now compute the residues along $L_-$. The key property we need is the following lemma.

   \begin{lem}
    \label{lem:w_1_action} The following assertions hold.

    \begin{itemize}
        \item For every $\alpha \in \Sigma_{P_\pi}$ such that $w_{\pi,n+1}^* \alpha <0$, the map $\lambda \mapsto \langle \lambda,\alpha^\vee \rangle$ is non-constant on $\cH_+$, and either non-constant or equal to a positive integer on $\cH_+ \cap \cH_-$. Let $\Sigma_1$ be the subset of $\alpha$'s such that this integer is $1$. Then for $\lambda \in \cH_+$
        \begin{equation*}
            \prod_{\alpha \in \Sigma_1} (\langle \lambda,\alpha^\vee \rangle -1)=\frac{\prod_{\Lambda_- \in L_-} \Lambda_-(\lambda)}{\prod_{i=1}^{m_1} \Lambda_-(1,i,d(1,i)-1)(\lambda)}.
        \end{equation*}
        \item For every $\alpha \in \Sigma_{P_\pi}$ such that $w_1 \alpha <0$, the map $\lambda \mapsto \langle w_{\pi,n+1}^* \lambda,\alpha^\vee \rangle$ is non-constant on $\cH_+$, and either non-constant or equal to a positive integer on $\cH_+ \cap \cH_-$. Let $\Sigma'_1$ be the subset of $\alpha$'s such that this integer is $1$. Then for $\lambda \in \cH_+$
    \begin{equation*}
        \prod_{\alpha \in \Sigma_1'}(\langle w_{\pi,n+1}^* \lambda,\alpha^\vee \rangle-1)=\prod_{i=1}^{m_1} \Lambda_-(1,i,d(1,i)-1)(\lambda).
    \end{equation*}
    \end{itemize}
   \end{lem}

   \begin{proof}
        For the first assertion of each point, note that the maps $\lambda \in \cH_+ \mapsto \lambda_n \in \fa_{P_\pi,n,\cc}^*$ and $\lambda \in \cH_+ \mapsto \lambda_{n+1} \in \fa_{P_\pi,n+1,\cc}^*$ are both surjective. The second assertion follows from the explicit description of $w_1$ in \eqref{eq:w_1_action_1} and \eqref{eq:w_1_action_2}, and of the restriction of the $\Lambda_-$'s to $\cH_+$ in \eqref{eq:L'_alternative}.
   \end{proof}

   We can now fully compute the residue using the first order \eqref{eq:first_order}.
    
    \begin{lem}
    \label{lem:second_residue_n+1}
        For $\lambda \in \cH_+$, define the meromorphic map 
        \begin{equation}
        \label{eq:regularized_L}
            M^*(w_{1},w_{\pi,n+1}^*\lambda)=\left(\prod_{i=1}^{m_1} \Lambda_-(1,i,d(1,i)-1)(\lambda)\right) M(w_{1},w_{\pi,n+1}^*\lambda).
        \end{equation}
        Then we have the equality of meromorphic functions on $\cH_+ \cap \cH_-$
        \begin{equation*}
             \underset{L_-}{\Res} \; \underset{L_+}{\Res} \; \cP(\phi,\lambda)= \cP^{P_{\res}}\left(M^*(w_{1},w_{\pi,n+1}^*\lambda)M^*(w_{\pi,n+1}^*,\lambda) \phi,w_1 w_{\pi,n+1}^*\lambda\right),
        \end{equation*}
        where $M^*(w_{\pi,n+1}^*,\lambda) \phi$ is the regularized global intertwining operator (on the $\GL_{n+1}$ component) defined in \eqref{eq:reg_operator}. Moreover, $\cH_+ \cap \cH_-$ is not contained in any of the singularities of $M^*(w_{1},  w_{\pi,n+1}^*\lambda)$.
    \end{lem}

    \begin{proof}
    By Lemma~\ref{lem:w_1_action}, the restriction of $M(w_1,w_{\pi,n+1}^*\lambda)$ to $\cH_+$ is well-defined, and moreover by Theorem~\ref{thm:N} all its singularities that contain $\cH_+ \cap \cH_-$ must come from poles of the global factor $n_{\sigma_\pi}(w_1,w_{\pi,n+1}^*\lambda)$. As this factor is a product of Rankin--Selberg $L$-functions of cuspidal representations by \eqref{eq:n_pi_formula}, they must lie along affine hyperplanes such that $\langle w_{\pi,n+1}^* \lambda,\alpha^\vee \rangle$ is constant equal to $1$. Therefore, $\cH_+ \cap \cH_-$ is not contained in any of the singularities of $M^*(w_{1},  w_{\pi,n+1}^*\lambda)$ by Lemma~\ref{lem:w_1_action}. By the same argument, the map
    \begin{equation*}
        \lambda \mapsto \frac{\prod_{\Lambda_- \in L_-} \Lambda_-(\lambda)}{\prod_{i=1}^{m_1} \Lambda_-(1,i,d(1,i)-1)(\lambda)} M(w^*_{\pi,n+1},\lambda) \phi
    \end{equation*}
    is $M^*(w_{\pi,n+1}^*,\lambda) \phi$ when restricted to $\cH_+ \cap \cH_-$. Therefore, we see that 
    \begin{equation*}
        M^*(w_{1}  w_{\pi,n+1}^*,\lambda)\phi:= \left( \prod_{\Lambda_- \in L_-} \Lambda_-(\lambda) \right) M(w_1  w_{\pi,n+1}^*,\lambda) \phi
    \end{equation*}
    is meromorphic on $\cH_+$ and equal to $M^*(w_{1},  w_{\pi,n+1}^*\lambda)M^*(  w_{\pi,n+1}^*,\lambda) \phi$ when restricted to $\cH_+ \cap \cH_-$. 

    We now study the possible poles coming from $\cP^{P_{\res}}$. Set $Q_\pi=w_{1}  w_{\pi,n+1}^*. P_\pi=w_1 .P_\pi$. Let $Q$ be a Rankin--Selberg parabolic subgroup such that $Q_\pi \subset Q^\std$ and $Q \subset P_{\res}$, and let $w \in W^{P_{\res}}(Q_\pi,Q^\std)$. Note that we have described in \eqref{eq:w_1_action_1} and \eqref{eq:w_1_action_2} the standard Levi of $Q_\pi$, and it follows that $Q_\pi \cap \bfM_{\res}^2=Q \cap \bfM_{\res}^2=\bfM_{\res}^2$. If we set $\cQ_\pi=Q_\pi \cap \cM_{\res}$ and $\cQ=Q \cap \cM_{\res}$, then $w \in W^{\cM_{\res}}(\cQ_\pi,\cQ^{\std})$ and 
    \begin{equation}
            \label{eq:theta_non_zero}
        \hat{\theta}_Q^{P_{\res}}(w_Q^\std ww_{1}  w_{\pi,n+1}^* \lambda+\underline{\rho}_Q)=\hat{\theta}_{\cQ}^{\cM_{\res}}(w_{\cQ}^\std w w_1 w_{\pi,n+1}^* \lambda + \underline{\rho}_{\cQ}).
    \end{equation}
    However, the map $\lambda \in \cH_+ \cap \cH_- \mapsto (w_1 w_{\pi,n+1}^* \lambda)_{|\fa_{\cQ_\pi,\cc}} \in \fa_{\cQ_\pi,\cc}^*$ is surjective given the explicit description of \eqref{eq:a_pi_defi}. Lemma~\ref{lem:regular} concludes that \eqref{eq:theta_non_zero} is indeed non-zero for $\lambda \in \cH_+ \cap \cH_-$ in general position. 

    Finally, we claim that the map
    \begin{equation*}
        \lambda \mapsto E^{Q^\std}\left(M(w,w_{1}  w_{\pi,n+1}^* \lambda)  M^*(w_{1}  w_{\pi,n+1}^*,\lambda)\phi,w w_{1}  w_{\pi,n+1}^* \lambda\right)
    \end{equation*}
    is meromorphic on $\cH_+$ and that $\cH_+ \cap \cH_-$ is not contained in any of its singularities. Note that both $E^{Q^\std}$ and $M(w,w_1 w_{\pi,n+1}\lambda)$ live in $\cM_{\res}$ because of the conditions on $Q^\std$ and $w$ described above. But as $\lambda \in \cH_+ \cap \cH_- \mapsto (w_1 w_{\pi,n+1}^* \lambda)_{|\fa_{\cQ_\pi,\cc}} \in \fa_{\cQ_\pi,\cc}^*$ is surjective, we easily conclude by \cite[Theorem~2.3]{BL} that both $M(w,w_{1}  w_{\pi,n+1}^* \lambda)$ and $E^{Q^\std}(\cdot,w w_{1}  w_{\pi,n+1}^* \lambda)$ satisfy the claim. 

    By Corollary~\ref{cor:P_regular}, we have the equality of meromorphic functions on $\cH_+$
    \begin{equation*}
       \underset{L_+}{\Res} \; \cP(\phi,\lambda)=\frac{\cP^{P_{\res}}\left(M^*(w_{1}  w_{\pi,n+1}^*,\lambda)\phi,w_{1}  w_{\pi,n+1}^*\lambda,1\right)}{\prod_{\Lambda_- \in L_-}\Lambda_-(\lambda)}.
    \end{equation*}
    Moreover, $\cH_+ \cap \cH_-$ is not contained in any of the singularities of the numerator. We now conclude by Lemma~\ref{lem:residues_commute} that
     \begin{equation*}
        \underset{L_-}{\Res} \; \underset{L_+}{\Res} \; \cP(\phi,\lambda)= \cP^{P_{\res}}\left(M^*(w_{1}  w_{\pi,n+1}^*,\lambda)\phi,w_{1}  w_{\pi,n+1}^*\lambda\right).
    \end{equation*}
    \end{proof}    

    The proof of the following lemma is exactly the same.

    \begin{lem}
    \label{lem:second_residue_n}
        For $\lambda \in \cH_+$, define the meromorphic map 
        \begin{equation}
        \label{eq:regularized_L'}
            M^*(w_{2}, w_{\pi,n}^*\lambda)=\left(\prod_{i=1}^{m_2} \Lambda_-(2,i,d(2,i)-1)(\lambda)\right) M(w_{2}, w_{\pi,n}^*\lambda).
        \end{equation}
        Then we have the equality of meromorphic functions on $\cH_+ \cap \cH_-$
        \begin{equation*}
             \underset{L_-}{\Res} \; \underset{L_+}{\Res} \; \cP(\phi,\lambda)= \cP^{P_{\res}}\left(M^*(w_{2}, w_{\pi,n}^*\lambda)M^*( w_{\pi,n}^*,\lambda) \phi,w_{2} w_{\pi,n}^*\lambda\right),
        \end{equation*}
        where $M^*( w_{\pi,n}^*,\lambda) \phi$ is the regularized global intertwining operator (on the $\GL_{n}$ component) defined in \eqref{eq:reg_operator}.
    \end{lem}

    \begin{prop}
    \label{prop:residue_construction}
        The linear form
        \begin{equation*}
             \underset{L_+ \cup L_-}{\Res} \; \cP(\cdot,\lambda-\nu_\pi) : \cA_{P_\pi,\sigma_\pi,\lambda-\nu_\pi}(G) \to \cc,
        \end{equation*}
        well defined for $\lambda \in \fa_{\pi,\cc}^*$ in general position factors through the quotient
        \begin{equation*}
            \cA_{P_\pi,\sigma_\pi,\lambda-\nu_\pi}(G) \xrightarrow{E^{P,*}(\cdot,\lambda-\nu_\pi)} \cA_{P,\pi,\lambda}(G) 
        \end{equation*}
        and yields a $H(\bA)$-invariant linear form $ \cP_{\pi}(\cdot,\lambda) : \cA_{P,\pi,\lambda}(G) \to \cc$.
    \end{prop}

    \begin{proof}
        Recall that by \eqref{eq:intersection_hyperplanes} we have $\cH_+ \cap \cH_-=\fa_{\pi,\cc}^*-\nu_{\pi}$. The proposition follows from Corollary~\ref{cor:same_quotient},  Lemma~\ref{lem:second_residue_n+1} and Lemma~\ref{lem:second_residue_n}. The linear form $\cP_{\pi}(\cdot,\lambda)$ is $H(\bA)$-invariant as a residue of the $H(\bA)$-invariant linear form $\cP(\cdot,\lambda-\nu_\pi)$ (Theorem~\ref{thm:reg}).
    \end{proof}

    \subsubsection{Proof of Theorem~\ref{thm:non_tempered_periods}}

    We can now end the proof of Theorem~\ref{thm:non_tempered_periods}. Because $\cP(\phi,\lambda)=Z_{\sigma_\pi}(\phi,\lambda)$ by Proposition~\ref{prop:Zydor_RS}, the first part is a reformulation of Proposition~\ref{prop:residue_construction}. The second part is proved by computing the residues of $Z_{\sigma_\pi}(\phi,\lambda)$ from its Euler product expansion in \eqref{eq:global_facto_zeta}, as we know that the local factors don't contribute by the proof of Lemma~\ref{lem:iterated_residue}.

    \subsection{Functional equations of \texorpdfstring{$\cP_\pi$}{the period}}
    \label{sec:functional_equation}

    The regularized linear form $\cP_\pi(\cdot,\lambda)$ on $\cA_{P,\pi,\lambda}(G)$ a priori depends on the choice of inducing datum $(P,\pi)$. We now show that changing it results in a functional equation for $\cP_\pi$.

    Let $(P,\pi) \in \Pi_H$. Note that any $w \in W(P)$ can be identified with a pair $(w_n,w_{n+1}) \in \fS(m_1+m_2)^2$, where we recall that we allow blocks of size zero in the case $d(1,i)=1$ or $d(2,j)=1$. We define $W_\Delta(\pi)$ to be the set of $w \in W(P)$ which satisfy
    \begin{equation}
    \label{eq:W_Delta_defi}
        w_n=w_{n+1} \in \fS(m_1) \times \fS(m_2) \subset \fS(m_1+m_2).
    \end{equation}
    Therefore, with the definitions in \S\ref{subsubsec:relevant_inducing}, $W_{\Delta}(\pi)$ is exactly the set of $w_\Delta \in W(P)$ such that $(w_\Delta. P,w_\Delta \pi) \in \Pi_H$. Moreover, we have $w_\Delta\fa_{\pi,\cc}^*=\fa_{w_\Delta \pi,\cc}^*$. 

    The functional equations satisfied by $\cP_{\pi}$ are summarized in the following proposition.

    \begin{prop}
    \label{prop:independence_choice_couple}
        Let $w_\Delta \in W_{\Delta}(\pi)$. For $\varphi \in \cA_{P,\pi}(G)$ and $\lambda \in \fa_{\pi,\cc}^*$ in general position we have
         \begin{equation}
    \label{eq:independence_choice_couple}
        \cP_{\pi}(\varphi,\lambda)=\cP_{w_\Delta \pi}(M(w_\Delta,\lambda) \varphi,w_\Delta\lambda).
    \end{equation}
    \end{prop}

    \begin{proof}
       Assume that $\varphi=E^{P,*}(\phi,-\nu_\pi)$ for $\phi \in \cA_{P_\pi,\sigma_\pi}(G)$. By Lemma~\ref{lem:ME=EM} and \cite[Theorem~2.2]{Lap} we have for $\lambda$ in general position the equality $M(w_\Delta,\lambda)\varphi=E^{w_\Delta .P,*}(M(w_\Delta,\lambda-\nu_\pi)\phi,w_\Delta\lambda-\nu_{w_\Delta \pi})$. We have $E(M(w_\Delta ,\mu)\phi,w_\Delta \mu)=E(\phi,\mu)$ so that the proposition follows from the definition of $\cP_\pi$ and the fact that we can take the residues in any order by Lemma~\ref{lem:iterated_residue}.
    \end{proof}

    \subsection{The residue-free construction}
    \label{subsubsec:residue_free}
    We explain an alternative construction of $\cP_{\pi}$ without residues. The idea is to realize $\cA_{P,\pi}(G)$ as a subrepresentation of some parabolic induction rather than as a quotient. 

    Let $P_{+,\pi}$ be the standard parabolic subgroup of $G$ with standard Levi factor
    \begin{align*}
        M_{P_{+,\pi},n}&= \prod_{i=1}^{m_1} \left( \GL_{(d(1,i)-1)r(1,i)}\times \GL_{r(1,i)} \right)\prod_{i=1}^{m_2} \GL_{(d(2,i)-1)r(2,i)} ,\\
          M_{P_{+,\pi},n+1}&= \prod_{i=1}^{m_1}  \GL_{(d(1,i)-1)r(1,i)} \prod_{i=1}^{m_2} \left(\GL_{(d(2,i)-1)r(2,i)}\times \GL_{r(2,i)}\right).
    \end{align*}
    Then $P_\pi \subset P_{+,\pi} \subset P$. Recall that we have defined in \eqref{eq:w_cL_defi} an element $w_{1,n+1}$ and in \eqref{eq:w_cL_'_defi} an element $w_{2,n}$. Set $w_+=(w_{2,n},w_{1,n+1})$. Then $w_+ \in W(P_{+,\pi})$. Set $Q_{+,\pi}:=w_+ . P_{+,\pi}$. Then we have    
    \begin{align}
        M_{Q_{+,\pi},n}&= \prod_{i=1}^{m_1} \GL_{(d(1,i)-1)r(1,i)} \prod_{i=1}^{m_2} \GL_{(d(2,i)-1)r(2,i)}  \prod_{i=1}^{m_1} \GL_{r(1,i)} , \label{eq:M_R_defi}\\
          M_{Q_{+,\pi},n+1}&= \prod_{i=1}^{m_1}  \GL_{(d(1,i)-1)r(1,i)} \prod_{i=1}^{m_2} \GL_{(d(2,i)-1)r(2,i)} \prod_{i=1}^{m_2} \GL_{r(2,i)} . \label{eq:M_R_defi_+1}
    \end{align}
    In words, $w_+$ sends the $\GL_{r(1,i)}$ and $\GL_{r(2,i)}$ blocks at the bottom right corners, while preserving their order.
    
    Let $P_{+,n+1}^\std$ be the standard parabolic subgroup of $\GL_{n+1}$ with standard Levi  factor
    \begin{equation}
    \label{eq:P_+_Levi}
         M_{P^\std_{+,n+1}}=\prod_{i=1}^{m_1}  \GL_{(d(1,i)-1)r(1,i)} \times \prod_{i=1}^{m_2} \GL_{(d(2,i)-1)r(2,i)} \times \GL_{k+1} ,
    \end{equation}
    where we recall that $k+1=\sum_i r(2,i)$. Let $P_+$ be the Rankin--Selberg parabolic subgroup of $G$ corresponding to the pair $(P^\std_{+,n+1},m_1+m_2+1)$. It is standard. Note that $w_+\in {}_{P_+} W_P$ and that $P_\pi \subset P_{+,w_+}=P_{+,\pi}$. We also have $P_{\res} \subset P_+$.

    We now consider the regularized period $\lambda \in \fa_{P,\cc}^* \mapsto \cP^{P_+}(\varphi,\lambda,w_+)$ which is well-defined for $\lambda \in \fa_{P,\cc}^*$ in general position by Corollary~\ref{cor:P_regular}. 

    \begin{prop}
    \label{prop:alternative_construction}
        Let $\varphi \in \cA_{P,\pi}(G)$. Then $\lambda \mapsto \cP^{P_+}\left(\varphi,\lambda,w_+\right)$ is a well-defined meromorphic function on $\fa_{\pi,\cc}^*$. Moreover, for $\lambda$ in general position we have
        \begin{equation}
        \label{eq:two_defi_coincide}
            \cP^{P_+}\left(\varphi,\lambda,w_+\right)=\cP_{\pi}(\varphi,\lambda).
        \end{equation}
    \end{prop}

    \begin{proof}
        The proof is very similar to Proposition~\ref{prop:residue_construction}, so that we only sketch it. We start from $\varphi=E^{P,*}(\phi,-\nu_\pi)$. We compute $\Res_{L_+}\; \cP(\phi,\lambda)$ by taking the residues in the following order:
    \begin{equation*}
         \left( \xleftarrow[i=1]{m_2} \xleftarrow[j=d(2,i)-1]{1} \Lambda_+(2,i,j) \right)\leftarrow
         \left( \xleftarrow[i=1]{m_1} \xleftarrow[j=d(1,i)-1]{1} \Lambda_+(1,i,j) \right).
    \end{equation*}
    Recall that by Remark~\ref{rem:diagonal_Arthur} we know that the Arthur diagonal period computes the Petersson inner product. By the computation of the constant term of residual automorphic forms in Lemma~\ref{lem:contant_term}, it follows that \eqref{eq:two_defi_coincide} reduces to the adjunction between residues of Eisenstein series and constant terms (see \cite[Proposition~9.4.4.1]{BoiPhD}). We refer the reader to \cite[Proposition~12.2.4.1]{BoiPhD} where this computation is carried out.
    \end{proof}

\section{Split non-tempered Gan--Gross--Prasad conjectures}
\label{sec:ggp_conj_non_tempered}

We keep the notation from \S\ref{sec:IYZ_periods} and \S\ref{sec:RS_non_tempered}, so that $G=\GL_n \times \GL_{n+1}$ and $H=\GL_n$. 

In this section, we prove the global non-tempered GGP conjecture from Theorem~\ref{thm:GGP_global_intro}. By the Euler product factorization of Theorem~\ref{thm:non_tempered_periods}, it remains to study the local linear forms $Z_{\sigma_\pi,v}^\natural$. We prove in Theorem~\ref{thm:local_GGP_explicit} that they yield non-zero $H$-invariant linear forms on the local components of $\cA_{P,\pi}(G)$. As explained in \S\ref{subsubsec:residues_zeta_intro}, we will provide two proofs of this result. In this section, we show that it follows from the factorization property of the global regularized period $\cP_\pi$. We then end the proof of Theorem~\ref{thm:GGP_global_intro} in \S\ref{subsec:split_non_tempered_global}.

\subsection{The split non-tempered local Ichino--Ikeda conjecture}

We spell out the result that we need on $Z_{\sigma_\pi,v}^\natural$. Because the problem is purely local, we fix a place $v$ and henceforth drop it from the notation. In particular, $F$ is a local field of characteristic zero.

\subsubsection{Setting}
\label{subsubsec:setting_local_GGP}

Recall that we have defined in \S\ref{subsubsec:local_non_tempered_GGP} a notion of irreducible representation $\Pi$ of weak Arthur type for $G(F)$ with relevant parameter. Such $\Pi=\Pi_n \boxtimes \Pi_{n+1}$ decomposes as a parabolic induction
\begin{align}
    \Pi_n&=\bigtimes_{i=1}^{m_1}  \Speh(\delta_{1,i},d(1,i))_{\nu_{1,i}} \times \bigtimes_{i=1}^{m_2}  \Speh(\delta_{2,i}^\vee,d(2,i)-1)_{-\nu_{2,i}}, \label{eq:pi_n_local} \\
    \Pi_{n+1}&=\bigtimes_{i=1}^{m_1}  \Speh(\delta_{1,i}^\vee,d(1,i)-1)_{-\nu_{1,i}} \times \bigtimes_{i=1}^{m_2}  \Speh(\delta_{2,i},d(2,i))_{\nu_{2,i}}, \label{eq:pi_n+1_local}
\end{align}
where the $\delta_{1,i}$ and $\delta_{2,i}$ are irreducible square integrable representations of some $\GL_{r(1,i)}$ and $\GL_{r(2,i)}$ respectively, and the $\nu_{1,i}$ and $\nu_{2,i}$ are all real numbers of absolute value strictly less than $1/2$. 

For each $k \in \{1,2\}$ and index $1 \leq i \leq m_k$, set $\pi_{k,i}=\Speh(\delta_{k,i},d(k,i))$ and $\pi_{k,i}^-=\Speh(\delta_{k,i},d(k,i)-1)$. Define
    \begin{equation*}
        \pi=\left(\boxtimes_{i=1}^{m_1} \pi_{1,i} \boxtimes_{i=1}^{m_2} \pi_{2,i}^{-,\vee} \right) \boxtimes  \left(\boxtimes_{i=1}^{m_1} \pi_{1,i}^{-,\vee} \boxtimes_{i=1}^{m_2} \pi_{2,i} \right).
    \end{equation*}
    This is an unitary irreducible representation of $M_P(F)$ a standard Levi factor  of a standard parabolic subgroup $P$ of $G$. In particular, we have $\Pi=I_{P}^G \pi_{\nu_\Pi}$ for $\nu_\Pi \in \fa_P^*$ which accounts for the twist by the $\nu_{1,i}$ and $\nu_{2,j}$. Note that $(P,\pi)$ is of the same shape as the relevant inducing pairs in $\Pi_H$ that we considered in the global setting in \S\ref{subsubsec:relevant_inducing}. In particular, we may define the space $\fa_{\pi,\cc}^*$ as in \eqref{eq:a_pi_defi}. We have $\nu_\Pi \in \fa_{\pi}^*$.
    
    Let $P_{\pi}$ be the standard parabolic subgroup of $G$ such that 
    \begin{equation}
    \label{eq:delta_defi}
        \delta_{\pi}=\delta_{\pi,n} \boxtimes \delta_{\pi,n+1}:= \left(\boxtimes_{i=1}^{m_1} \delta_{1,i}^{\boxtimes d(1,i)} \boxtimes_{i=1}^{m_2} (\delta_{2,i}^\vee)^{\boxtimes (d(2,i)-1)} \right) \boxtimes \left(\boxtimes_{i=1}^{m_1} (\delta_{1,i}^\vee)^{\boxtimes (d(1,i)-1)} \boxtimes_{i=1}^{m_2} \delta_{2,i}^{\boxtimes d(2,i)} \right)
    \end{equation}
    is a square integrable representation of $M_{P_{\pi}}(F)$. We define $\nu_{\pi} \in \fa_{P_{\pi}}^*$ as in \S\ref{subsubsec:disc_gln} so that $\Pi$ is a quotient of $I_{P_{\pi}}^G \delta_{\pi,-\nu_{\pi}+\nu_\Pi}$. Note that this is not a standard module in general.

    \subsubsection{Normalized Zeta functional}
    
    By \cite[Theorem~9.7]{Zel}, the square integrable representation $\delta_\pi$ is generic. After choosing a Whittaker functional for $\delta_\pi$, for any $\phi \in I_{P_{\pi}}^G \delta_\pi$ and $\lambda \in \fa_{P_{\pi}}^*$, we can consider the Jacquet integral $W_{P_{\pi},\delta_\pi}^{\psi_0}(\phi,\lambda)$ and the Zeta integral $Z_{\delta_\pi}(\phi,\lambda)$ from \eqref{eq:local_Jacquet} and \eqref{eq:local_zeta_int}. We also have the local $L$-factors $b(\lambda,\delta_\pi)$ and $L(\lambda+1/2,\delta_{\pi,n} \times \delta_{\pi,n+1})$ from \eqref{eq:b_v} and \eqref{eq:local_RS}. We form the normalized Zeta integral $Z_{\delta_\pi}^\natural(\phi,\lambda)$ as in \eqref{eq:normalized_zeta_local}. The formula is 
    \begin{equation}
        \label{eq:normalized_zeta_local}
        Z^\natural_{\delta_\pi}(\phi,\lambda)=\frac{Z_{\delta_\pi}(\phi,\lambda)}{L(\lambda+1/2,\delta_{\pi,n} \times \delta_{\pi,n+1})} b(\lambda,{\delta_\pi}).
    \end{equation}
   By Lemma~\ref{lem:local_zeta_reg}, $\lambda \in \fa_{P_\pi,\cc}^* \mapsto Z_{\delta_\pi}^\natural(\phi,\lambda)$ is regular outside of the singularities of $b(\lambda,\delta_\pi)$, and by the proof of Lemma~\ref{lem:iterated_residue} we know that $\fa_{\pi,\cc}^*-\nu_{\pi}$ is not contained in this union of affine hyperplanes. We may therefore consider the restriction and define
    \begin{equation*}
        RZ_{\delta_\pi}^{\natural}(\phi,\lambda)=\left(\left.Z^\natural_{\delta_\pi}(\phi,\cdot) \right|_{\fa_{\pi,\cc}^*-\nu_\pi}\right)(\lambda), \quad \lambda \in \fa_{\pi,\cc}^*-\nu_\pi.
    \end{equation*}
    This is a meromorphic function on $\fa_{\pi,\cc}^*-\nu_{\pi}$.

    \subsubsection{Statement of the main theorem}

    The goal of \S\ref{sec:ggp_conj_non_tempered} is to prove that $RZ_{\delta_\pi}^\natural$ enjoys the following properties. 

    \begin{theorem}
        \label{thm:local_GGP_explicit}
        The following assertions hold.
        \begin{enumerate}
            \item Regularity: For every $\phi \in I_{P_{\pi}}^G \delta_\pi$, the meromorphic function $ RZ_{\delta_\pi}^{\natural}(\phi,\lambda)$ is regular at the point $-\nu_\pi+\nu_\Pi$.
            \item Non-vanishing: The map $RZ_{\delta_\pi}^{\natural}(\cdot,-\nu_\pi+\nu_\Pi)$ is non-zero and $H(F)$-invariant.
            \item Factorization:  The linear form $RZ_{\delta_\pi}^{\natural}(\cdot,-\nu_\pi+\nu_\Pi)$ factors through $I_{P_{\pi}}^G \delta_{\pi,-\nu_{\pi}+\nu_\Pi} \twoheadrightarrow \Pi$.
        \end{enumerate}
        \end{theorem}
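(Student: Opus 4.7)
\textbf{Plan for the proof of Theorem~\ref{thm:local_GGP_explicit}.}

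The plan is to deduce the local statement from the global results obtained in the previous section by a globalization argument. In the first step I would choose a number field $F'$ with a place $v_{0}$ such that $F'_{v_{0}}\simeq F$ and, for each $(k,i)$, globalize $\delta_{k,i}$ to a (unitary) cuspidal automorphic representation $\tilde{\sigma}_{k,i}$ of $\GL_{r(k,i)}(\bA_{F'})$ whose local component at $v_{0}$ is $\delta_{k,i}$. This relies on standard globalization with prescribed local components, which is available in both Archimedean and non-Archimedean settings since the $\delta_{k,i}$ are square integrable. From these cuspidal data I construct $(\tilde{P},\tilde{\pi})\in \Pi_{H}$ using \eqref{eq:MP_n}--\eqref{eq:pi_n}, and denote by $\tilde{\sigma}_{\tilde{\pi}}$ and $\nu_{\tilde{\pi}}$ the associated datum of \S\ref{subsubsec:residual_blocks}. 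By construction the local situation at $v_{0}$ agrees with $(P,\pi,\delta_{\pi},\nu_{\pi})$, and we have a meromorphic family of $H(\bA_{F'})$-invariant linear forms $\cP_{\tilde{\pi}}(\cdot,\lambda)$ on $\cA_{\tilde{P},\tilde{\pi},\lambda}(G)$ satisfying the Euler product of Theorem~\ref{thm:non_tempered_periods}.

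Next, for $\lambda\in\fa_{\tilde{\pi},\cc}^{*}$ in general position I would extract the local factorization at $v_{0}$. Let $\phi_{v_{0}}\in\Ker\!\left(I_{P_{\pi}}^{G}\delta_{\pi,-\nu_{\pi}+\lambda_{v_{0}}}\twoheadrightarrow I_{P}^{G}\pi_{\lambda_{v_{0}}}\right)$ for a generic $\lambda_{v_{0}}\in\fa_{\pi,\cc}^{*}$. Because the kernel of the global surjection $\cA_{\tilde{P}_{\tilde{\pi}},\tilde{\sigma}_{\tilde{\pi}},\lambda-\nu_{\tilde{\pi}}}\twoheadrightarrow \cA_{\tilde{P},\tilde{\pi},\lambda}$ is a $G(\bA_{F'})$-subrepresentation whose restricted tensor product decomposition contains $\phi_{v_{0}}\otimes\bigotimes_{v\neq v_{0}}\phi_{v}$ for any choice of $\phi_{v}$, factoring $\cP_{\tilde{\pi}}$ through the quotient combined with the Euler product yields
\[
0=\varepsilon(\lambda)\cL(\lambda,\tilde{\pi})\prod_{v\in\tS} Z^{\natural}_{\tilde{\sigma}_{\tilde{\pi},v}}(\phi_{v},\lambda-\nu_{\tilde{\pi}}).
\]
For this to force $Z^{\natural}_{\delta_{\pi}}(\phi_{v_{0}},-\nu_{\pi}+\lambda_{v_{0}})=0$ I need the other factors to be non-vanishing; the global factor $\cL(\lambda,\tilde{\pi})$ is meromorphic and generically non-zero (it is a product of Rankin--Selberg $L$-functions), and at places $v\neq v_{0}$ the local normalized Zeta integrals are non-zero for a generic choice of $\phi_{v}$ (e.g. unramified vectors outside a finite set, and classical non-vanishing at the remaining places). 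This yields factorization of $Z^{\natural}_{\delta_{\pi}}(\cdot,-\nu_{\pi}+\lambda_{v_{0}})$ through $I_{P}^{G}\pi_{\lambda_{v_{0}}}$ for generic $\lambda_{v_{0}}\in\fa_{\pi,\cc}^{*}$. Non-vanishing at generic $\lambda_{v_{0}}$ then follows because for $\lambda_{v_{0}}$ generic the induction $I_{P}^{G}\pi_{\lambda_{v_{0}}}$ is a standard module whose unique (up to scalar) $H(F)$-invariant functional is the Rankin--Selberg integral (\cite{JPSS83}), and the descended functional coincides with the tempered normalized local Zeta functional that is known to be non-zero.

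The main obstacle is the descent from $\lambda_{v_{0}}$ in general position to the specific point $\lambda_{v_{0}}=\nu_{\Pi}$. Here two things happen: $I_{P}^{G}\pi_{\nu_{\Pi}}$ is no longer a standard module but is still irreducible and equal to $\Pi$ (by \cite[Proposition~I.9]{MW89}), and the meromorphic family $RZ^{\natural}_{\delta_{\pi}}(\cdot,-\nu_{\pi}+\lambda)$ could a priori develop a pole at $\lambda=\nu_{\Pi}$ coming from the denominator $b(\lambda-\nu_{\pi},\delta_{\pi})$. To handle this I would use the constraint $|\nu_{k,i}|<1/2$: the hyperplane singularities of $b$ are of the form $\lambda_{n,j}-\lambda_{n,i}\in\{-1\}\cup\{\text{shifts}\}$ coming from $L$-functions of matched square integrable factors, and the computation of $\cH_{+}\cap\cH_{-}$ in \eqref{eq:intersection_hyperplanes} together with the strict inequality $|\nu_{k,i}|<1/2$ ensures that none of these singular hyperplanes passes through the point $-\nu_{\pi}+\nu_{\Pi}$.

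Once regularity at $\nu_{\Pi}$ is established, factorization and non-vanishing extend by continuity. Indeed, the factorization through the quotient is a closed condition on linear forms, hence preserved under taking the (regular) value at $\lambda=\nu_{\Pi}$. For non-vanishing at $\nu_{\Pi}$, I would reverse the globalization argument: if $RZ^{\natural}_{\delta_{\pi}}(\cdot,-\nu_{\pi}+\nu_{\Pi})$ were identically zero on $\Pi$, then choosing any global $\phi_{v_{0}}$ and suitable companions $\phi_{v}$ at $v\neq v_{0}$ would force $\cP_{\tilde{\pi}}(\varphi,\nu_{\tilde{\Pi}})=0$ for every $\varphi\in\tilde{\Pi}$, contradicting the non-vanishing half of Theorem~\ref{thm:GGP_global_intro} that can be arranged by a sufficiently flexible globalization (ensuring $\cL^{*}(\tilde{\pi})\neq 0$ and all remaining local factors non-zero). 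This step is the most delicate because it couples the local non-vanishing with a suitable choice of the global $\tilde{\pi}$, and it is where the precise shape of $\cL(\lambda,\tilde{\pi})$ and the convergence constraint $|\nu_{k,i}|<1/2$ are used crucially.
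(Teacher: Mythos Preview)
Your globalization step to obtain generic factorization of $RZ_{\delta_\pi}^{\natural}$ through $I_P^G\pi_\lambda$ is essentially the same as the paper's first proof, and is fine. The genuine gap is in your descent from generic $\lambda$ to the specific point $\nu_\Pi$.

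Your regularity argument is incorrect: the claim that ``none of the singular hyperplanes of $b(\lambda-\nu_\pi,\delta_\pi)$ passes through $-\nu_\pi+\nu_\Pi$'' fails in general. Take $m_1=2$, $d(1,1)=d(1,2)=2$, $\delta_{1,1}=\delta_{1,2}=\delta$, $\nu_{1,1}=\nu_{1,2}=0$; then on $\fa_{\pi,\cc}^*-\nu_\pi$ the $n$-part of $b$ contains the factor $L(\lambda(1)_1-\lambda(1)_2,\delta\times\delta^\vee)$, which has a genuine pole along $\lambda(1)_1=\lambda(1)_2$, and this hyperplane passes through $\nu_\Pi=0$. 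The constraint $|\nu_{k,i}|<1/2$ is of no help here. The poles of $b$ \emph{do} get cancelled by zeros of $Z_{\delta_\pi}/L$, but this cancellation is exactly what needs to be proved.

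Your non-vanishing argument at $\nu_\Pi$ is circular: you invoke the non-vanishing half of the global theorem, but the paper establishes that global non-vanishing only \emph{after} proving the local regularity and non-vanishing of $RZ_{\delta_\pi}^{\natural}$ at every place (Lemma~\ref{lem:local_non_vanishing}). Also, the side remark that ``$I_P^G\pi_{\lambda_{v_0}}$ is a standard module for generic $\lambda_{v_0}$'' is not right: $\pi$ is a product of Speh representations, not a tempered representation.

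The missing ingredient is the functional-equation trick of Proposition~\ref{prop:facto_is_enough}: once generic factorization is known, choose $w\in W(P_\pi)$ with $w(-\nu_\pi+\nu_\Pi)\in\overline{\fa_{w.P_\pi}^{*,+}}$, use $Z_{\delta_\pi}^\natural(\phi,\lambda)=Z_{w\delta_\pi}^\natural(N_{\delta_\pi}(w,\lambda)\phi,w\lambda)$ (Lemma~\ref{lem:basic_Z}), and lift via $N_{w\delta_\pi}(w_\pi^*w^{-1},w\lambda)$ to a holomorphic family in a genuine standard module. There $b$ is regular and non-vanishing is the classical Rankin--Selberg result; regularity, factorization, and non-vanishing at $-\nu_\pi+\nu_\Pi$ all follow by pulling back. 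This is where the hypothesis $|\nu_{k,i}|<1/2$ is actually used (to ensure irreducibility of $\Pi$ and hence surjectivity of the composed intertwining operator).
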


    We will provide two proofs of Theorem~\ref{thm:local_GGP_explicit}. The first uses a local-global argument and the Euler product expression of $\cP_\pi$ given in Theorem~\ref{thm:non_tempered_periods}. It works equally well in the Archimedean and non-Archimedean cases, and is the content of this section. The second proof is local and specific to the non-Archimedean setting. It will be given in \S\ref{sec:local_zeta}.

    \subsubsection{Factorization is enough}

    It turns out that having condition 3 in Theorem~\ref{thm:local_GGP_explicit} \emph{generically} is enough to prove the other two points. To prove this, we first recall some properties of $Z^\natural_{\delta_\pi}$.

    \begin{lem}
        \label{lem:tempered_L_function}
        Let $\pi_1$ and $\pi_2$ be two irreducible representations of $\GL_{k_1}(F)$ and $\GL_{k_2}(F)$, with $\pi_1=I_{P_1}^{\GL_{k_1}} \delta_{1,\nu_1}$ and $\pi_2=I_{P_2}^{\GL_{k_2}} \delta_{2,\nu_2}$ for some square integrable representations $\delta_1$ and $\delta_2$ of $M_{P_1}(F)$ and $M_{P_2}(F)$ respectively, and let $\nu_1 \in \fa_{P_1}^*$ and $\nu_2 \in \fa_{P_2}^*$. Further decompose $\delta_{1}=\boxtimes_{i=1}^{m_1} \delta_{1,i}$ and $\delta_{2}=\boxtimes_{j=1}^{m_2} \delta_{2,j}$, where the $\delta_{1,i}$ and $\delta_{2,j}$ are representations of some general linear groups. Write $(\nu_{1,i})$ and $(\nu_{2,j})$ for the coordinates of $\nu_1$ and $\nu_2$. Then
        \begin{equation}
            \label{eq:tempered_L_function2}
            L(s,\pi_1 \times \pi_2)=\prod_{i=1}^{m_1} \prod_{j=1}^{m_2} L(s+\nu_{1,i}+\nu_{2,j},\delta_{1,i} \times \delta_{2,j}).
        \end{equation}
    \end{lem}

    \begin{proof}
        By \cite[Lemme~I.8]{MW89} we may assume that $\nu_1 \in \overline{\fa_{P_1}^{*,+}}$ and $\nu_2 \in \overline{\fa_{P_2}^{*,+}}$. Then we take $Q_1 \supset P_1$ and $Q_2 \supset P_2$ such that $\nu_1 \in \fa_{Q_1}^{*,+}$ and $\nu_2 \in \fa_{Q_2}^{*,+}$. Consider $\tau_1=I_{P_1}^{M_{Q_1}} \delta_1$ and $\tau_2=I_{P_2}^{M_{Q_2}} \delta_2$. They are irreducible tempered representation by \cite{Jac2}. In the non-Archimedean case, by \cite[Proposition~8.4]{JPSS83} we are reduced to proving \eqref{eq:tempered_L_function2} for $I_{Q_1}^{\GL_{k_1}}\tau_{1,\nu_1}$ and $I_{Q_2}^{\GL_{k_2}}\tau_{2,\nu_2}$. But this is now \cite[Proposition~9.1]{JPSS83}. In the Archimedean case, we use the analogous results from \cite{Jac09}. 
    \end{proof}

    Recall that we have defined a normalized intertwining operator $N_{\delta_\pi}$ in \S\ref{subsubsec:local_normalized_operator}. 

    \begin{lem}
        \label{lem:basic_Z}
        The following assertions hold.
        \begin{enumerate}
            \item Let $w \in W(P_\pi)$. For $\lambda \in \fa_{P_\pi,\cc}^*$ in general position, we have 
            \begin{equation*}
                Z_{\delta_\pi}^\natural(\phi,\lambda)=Z^\natural_{w \delta_\pi}(N_{\delta_\pi}(w,\lambda)\phi,w\lambda).
            \end{equation*}
            \item For any $\lambda$ outside of the singularities of $b(\lambda,\delta_\pi)$, the linear form $\phi \mapsto Z^\natural_{\delta_\pi}(\phi,\lambda)$ is non-zero.

        \end{enumerate}
    \end{lem}

    Note that for the first assertion to hold we need to pinpoint the measures and Whittaker functionals involved, otherwise it may only be true up to scalar. We will explain how to make adequate choices in \S\ref{subsec:local_measures}, and for now leave this issue unresolved to ease the exposition.

    \begin{proof}
        The first point is a consequence of the computation of $\gamma$-factors (see e.g. \cite[Section~11.1]{Kim}) (see also \S\ref{subsubsec:gamma} below where we recall the formulae).

        For the second, we first take $\lambda \in \fa_{P_\pi,\cc}^*$ such that $\Re(\lambda) \in \overline{\fa_{P_\pi}^{*,+}}$. Let $Q \supset P_\pi$ be a standard parabolic subgroup of $G$ such that $\Re(\lambda) \in \fa_{Q}^{*,+}$. Set $\tau=I_{P_\pi \cap M_Q}^{M_Q} \delta_\pi$. This is an irreducible tempered representation of $M_Q(F)$ by \cite{Jac2}. By the definition of Jacquet functionals and by the compatibility of $L$-factors with respect to induction from Lemma~\ref{lem:tempered_L_function}, we have $Z_{\delta_\pi}^\natural=Z_{\tau}^\natural$. The result now follows from \cite[Proposition~9.4]{JPSS83} in the non-Archimedean case, and \cite{Jac09} in the Archimedean case. If now $\lambda$ is any element outside of singularities of $b(\lambda,\delta_\pi)$ so that $\phi \mapsto Z^\natural_{\delta_\pi}(\phi,\lambda)$ is well-defined, we may choose $w \in W(P_\pi)$ such that $\Re(w \lambda) \in \overline{\fa_{w.P_\pi}^{*,+}}$, and then reduce to the previous case by the first point of the lemma. 
    \end{proof}

    We can now prove that "factorization is enough" in Theorem~\ref{thm:local_GGP_explicit}.

    \begin{prop}
        \label{prop:facto_is_enough}
        Assume that for $\lambda \in \fa_{\pi,\cc}^*$ in general position the linear form $RZ_{\delta_\pi}^{\natural}(\cdot,\lambda-\nu_\pi)$ factors through the quotient $I_{P_{\pi}}^G \delta_{\pi,\lambda-\nu_{\pi}} \twoheadrightarrow I_P^G \pi_\lambda$. Then Theorem~\ref{thm:local_GGP_explicit} holds.
    \end{prop}

\begin{proof}
    Let $w \in W(P_\pi)$ so that $w (-\nu_{\pi}+\nu_\Pi) \in \overline{\fa_{Q_{\pi}}^{*,+}}$, where $Q_{\pi}=w.P_{\pi}$. By Theorem~\ref{thm:N}, the normalized operator $N_{w \delta_\pi}(w^{-1},w\lambda)$ is regular at $-\nu_\pi+\nu_\Pi$ (as a meromorphic operator on $\fa_{P_\pi,\cc}^*$). We may therefore consider the composition
    \begin{equation*}
        I_{Q_\pi}^G w \delta_{\pi,-\nu_\pi+\nu_\Pi} \xrightarrow{N(w^{-1},w(-\nu_\pi+\nu_\Pi))} I_{P_\pi}^G \delta_{\pi,-\nu_\pi+\nu_\Pi} \xrightarrow{N(w_\pi^*,-\nu_\pi+\nu_\Pi)}  \Pi.
    \end{equation*}
    By \cite[Section~I.11]{MW89}, the last map realizes the quotient $I_{P_\pi}^G \delta_{\pi,-\nu_\pi+\nu_\Pi} \to \Pi$. We claim that the composition is also surjective. Because $\Pi$ is irreducible by \cite[Proposition~I.9]{MW89} (this is where we use the hypothesis that all the components of $\nu_\Pi$ are of absolute value strictly less that $1/2$), it is enough to show that it is non-zero. But by Theorem~\ref{thm:N}, it is $N_{w \delta}(w_\pi^*w^{-1},w(-\nu_\pi+\nu_\Pi))$ and by \cite[Equation~(I.1.2)]{MW89} it is indeed non-zero. 
    
    Let $\phi \in I_{P_\pi}^G \delta_\pi$. By the preceding discussion, in a neighborhood $\cU$ of $-\nu_\pi+\nu_\Pi$ in $\fa_{P,\cc}^*-\nu_\pi$  we may choose a local section of the holomorphic map $\lambda \mapsto N_{\delta_\pi}(w_\pi^*,\lambda) \phi$ by the operator $N_{w \delta_\pi}(w_\pi^*w^{-1},w\lambda)$. This means that there exists a holomorphic family $\lambda \in \cU \mapsto \phi(\lambda) \in I_{Q_\pi}^G w \delta_{\pi,\lambda}$ (where all these spaces are identified by restriction to $K$) such that for $\lambda \in \cU$ we have
    \begin{equation*}
        N_{\delta_\pi}(w_\pi^*,\lambda)\left(N_{w \delta_\pi}(w^{-1},w\lambda)\phi(\lambda)-\phi \right) =0.
    \end{equation*}
    By assumption, for $\lambda \in \fa_{\pi,\cc}^*-\nu_\pi$ in general position the linear map $\phi \mapsto Z_{\delta_\pi}^\natural(\phi,\lambda)$ factors through $N_{\delta_\pi}(w_\pi^*,\lambda)$. By Lemma~\ref{lem:basic_Z}, for $\lambda \in \cU \cap (\fa_{\pi,\cc}^*-\nu_\pi)$ in general position we have
    \begin{equation*}
        RZ_{w \delta_\pi}^\natural(\phi(\lambda),w\lambda)=RZ_{\delta_\pi}^\natural(\phi,\lambda).
    \end{equation*}
    By Lemma~\ref{lem:basic_Z}, the LHS is holomorphic at $-\nu_\pi+\nu_\Pi$, so that the RHS is as well. 
    
    That $\phi \mapsto RZ_{\delta}^\natural(\phi,-\nu_\pi+\nu_\Pi)$ factors through $\Pi$ follows from the fact that this holds generically and that for any $\mu \in \fa_{\pi,\cc}^*$ we have $N_{\delta_\pi}(w_{\pi}^*,\mu-\nu_\pi)\phi=(N_{\delta_\pi}(w_\pi^*,-\nu_\pi)\phi)_\mu$.
    
    Finally, it remains to prove that this linear map is non-zero. But this follows immediately from Lemma~\ref{lem:basic_Z} by lifting the problem to a standard module.
\end{proof}

\subsection{A local to global proof of the factorization property}

We prove that the factorization property required in Proposition~\ref{prop:facto_is_enough} holds.

\begin{prop}
    Let $F$ be a local field of characteristic zero. Let $\Pi$ be a representation of weak Arthur type of $G$, with relevant Arthur parameter. Then for $\lambda \in \fa_{\pi,\cc}^*$ in general position the linear form $RZ_{\delta_\pi}^{\natural}(\cdot,\lambda-\nu_\pi)$ factors through the quotient $I_{P_{\pi}}^G \delta_{\pi,\lambda-\nu_{\pi}} \twoheadrightarrow I_P^G \pi_\lambda$.
\end{prop}

\begin{proof}
    Recall that $\delta_\pi$ decomposes as a tensor product of $\delta_{1,i}$ and $\delta_{2,j}$ in \eqref{eq:delta_defi}, which are discrete series of some $\GL_{r(1,i)}(F)$ and $\GL_{r(2,j)}(F)$ respectively. By applying the globalization result of \cite[Theorem~1.B]{Clo}, we obtain a number field $K$, a place $v$ of $K$ such that $K_v =F$, and a family of cuspidal automorphic representations $\sigma_{1,i}$ and $\sigma_{2,i}$ whose components at $v$ are $\delta_{1,i}$ and $\delta_{2,j}$. Set
\begin{equation*}
    \sigma_{\pi}=\left(\boxtimes_{i=1}^{m_1} \sigma_{1,i}^{\boxtimes d(1,i)} \boxtimes_{j=1}^{m_2} (\sigma_{2,j}^\vee)^{\boxtimes (d(2,j)-1)}\right) \boxtimes \left(\boxtimes_{i=1}^{m_1} (\sigma_{1,i}^\vee)^{\boxtimes ( d(1,i)-1)} \boxtimes_{j=1}^{m_2} \sigma_{2,j}^{\boxtimes d(2,j)}\right),
\end{equation*}
so that $\sigma_{\pi} \in \Pi_\cusp(M_{P_{\pi}})$, and define $\pi_{\mathrm{aut}} \in \Pi_\disc(M_P)$ by 
\begin{align*}
    \pi_{\mathrm{aut},n}&=\boxtimes_{i=1}^{m_1} \Speh(\sigma_{1,i},d(1,i)) \boxtimes_{j=1}^{m_2} \Speh(\sigma_{2,j}^\vee,d(2,j)-1) \\
    \pi_{\mathrm{aut},n+1}&=\boxtimes_{i=1}^{m_1} \Speh(\sigma_{1,i}^\vee,d(1,i)-1) \boxtimes_{j=1}^{m_2} \Speh(\sigma_{2,j},d(2,j)).
\end{align*}
We now have $(P,\pi_{\mathrm{aut}}) \in \Pi_H$. Let $\phi=\otimes'_v \phi_v \in I_{P_\pi}^G \sigma_\pi$. By the Euler product expansion of Theorem~\ref{thm:non_tempered_periods}, there exists a finite set of places $\tS$ of $K$ (containing $v$) such that for $\lambda \in \fa_{\pi,\cc}^*$ in general position we have
\begin{equation*}
    \underset{L_-}{\Res} \;  \underset{L_+}{\Res} \; \cP(\phi,\lambda-\nu_\pi)=\varepsilon(\lambda)\cL(\lambda,\pi) \prod_{w \in \tS} RZ_{\sigma_{\pi,w}}^\natural(\phi_w,\lambda-\nu_\pi).
\end{equation*}
Here we recall that $\varepsilon(\lambda)\cL(\lambda,\pi)$ is a not identically zero meromorphic function on $\fa_{\pi,\cc}^*$. For $\lambda \in \fa_{\pi,\cc}^*$ in general position, we know by Lemma~\ref{lem:basic_Z} that the RHS is non-zero for a suitable choice of $\otimes_{w \in \tS} \phi_w$. Therefore, for any $\phi_v \in I_{P_\pi}^G \delta_\pi$ we obtain 
\begin{align*}
    &RZ^\natural_{\delta_\pi}(\phi_v,\lambda-\nu_\pi) =\underset{L_-}{\Res} \;  \underset{L_+}{\Res} \; \cP(\phi,\lambda-\nu_\pi)\varepsilon(\lambda)^{-1}\cL(\lambda,\pi)^{-1} \prod_{\substack{w \in \tS \\ w \neq v}} RZ_{\sigma_\pi,w}^\natural(\phi_w,\lambda-\nu_\pi)^{-1}
\end{align*}
But by Corollary~\ref{cor:same_quotient} and Proposition~\ref{prop:residue_construction}, we know that $\underset{L_-}{\Res} \;  \underset{L_+}{\Res} \; \cP(\cdot,\lambda-\nu_\pi)$ factors through $N_{\sigma_\pi}(w_\pi^*,\lambda-\nu_\pi)$ for $\lambda \in \fa_\pi^*$ in general position. Therefore, $\phi_v \mapsto RZ^\natural_{\delta_\pi}(\phi_v,\lambda-\nu_\pi)$ also factors through $N_{\delta_\pi}(w_\pi^*,\lambda-\nu_\pi)$ for such $\lambda$, i.e. through $I_{P_\pi}^G \delta_{\pi,\lambda-\nu_\pi} \to I_P^G \pi_\lambda$. This concludes.
\end{proof}

Using Proposition~\ref{prop:facto_is_enough}, we conclude that Theorem~\ref{thm:local_GGP_explicit} holds. 

\subsection{The split non-tempered global Gan--Gross--Prasad conjecture}
\label{subsec:split_non_tempered_global}

We now go back to the global setting to prove Theorem~\ref{thm:GGP_global_intro}. 

Let $(P,\pi) \in \Pi_H$ be a relevant inducing pair. By the Euler product expansion of Theorem~\ref{thm:non_tempered_periods} and by the first point of Theorem~\ref{thm:local_GGP_explicit}, we know that the only singularities of $\cP_\pi(\cdot,\lambda)$ at $\lambda=0$ come from the numerator in the quotient $\cL(\lambda,\pi)$, and all they all lie along affine hyperplanes. We denote by $\cP_\pi^*$ and $\cL^*(\pi)$ the regularized values of $\cP_\pi(\cdot,\lambda)$ and $\cL(\lambda,\pi)$ at $0$, defined by multiplying by the corresponding product of affine linear forms and evaluating. In particular, $\cP_\pi^*$ still defines a $H(\bA)$-invariant linear form on $\cA_{P,\pi}(G)$. The lemma that remains to prove is the following.

    \begin{lem}
        \label{lem:local_non_vanishing}
        Let $v$ be a place of $F$. The map $RZ_{\sigma_\pi,v}^\sharp(\cdot,\lambda)$ is regular at $\lambda=-\nu_\pi$ and the linear form $\phi_v \mapsto RZ_{\sigma_\pi,v}^\sharp(\phi_v,-\nu_\pi)$ is non-zero.
    \end{lem}

    \begin{proof}
        The representation $\sigma_{\pi,v}$ is unitary and generic. By \cite{JS} and \cite[Section~I.11]{MW89}, there exist a standard parabolic subgroup $P_{\delta}$ of $G$, a square integrable representation $\delta_v$ of $M_{P_{\delta}}(F_v)$ and an element $\nu_{v} \in \fa_{P_{\delta}}^*$ whose coordinates are all of absolute value strictly less than $1/2$ such that $\sigma_{\pi,v}=I_{P_{\delta} \cap M_{P_\pi}}^{M_{P_\pi}} \delta_{v,\nu_v}$. By Lemma~\ref{lem:tempered_L_function} we have $Z_{\sigma_\pi,v}^\sharp(\cdot,\lambda)=Z_{\delta_v}^\sharp(\cdot,\lambda+\nu_v)$. By the assumption that $(P,\pi) \in \Pi_H$, we see that there exists an element $w \in W(P_{\delta})$ such that $w \delta$ is of the form prescribed in \eqref{eq:delta_defi}. If we write $\Pi_v$ for the induction of the Speh representations built from $w \delta_v$ at $-\nu_\pi+\nu_v$ like in \eqref{eq:pi_n_local} and \eqref{eq:pi_n+1_local}, then $\Pi_v$ is a relevant representation of weak Arthur type. Moreover, because of the bound on $\nu_v$, by Theorem~\ref{thm:N} we can choose $w$ so that the normalized intertwining operator $N_{\delta_v}(w,\lambda)$ is regular at $-\nu_\pi+\nu_v$. It remains to use the functional equation of Zeta integrals from Lemma~\ref{lem:basic_Z} and the non-vanishing and regularity statements of Theorem~\ref{thm:local_GGP_explicit}.
    \end{proof}

\begin{theorem}
    The linear form $\cP_\pi^*$ is non-zero if and only if $\cL^*(\pi) \neq 0$.
\end{theorem}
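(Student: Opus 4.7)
The proof is a direct consequence of what has already been assembled: the Euler product expansion of Theorem~\ref{thm:non_tempered_periods} together with the local non-vanishing and regularity statement of Lemma~\ref{lem:local_non_vanishing}. My plan is therefore to evaluate the regularized period explicitly on factorizable vectors and read off both implications.

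Concretely, I start from the meromorphic identity
\begin{equation*}
    \cP_\pi(\varphi,\lambda) = \varepsilon(\lambda)\,\cL(\lambda,\pi)\,\prod_{v \in \tS} Z_{\sigma_\pi,v}^\natural(\phi_v,\lambda-\nu_\pi)
\end{equation*}
valid for $\lambda \in \fa_{\pi,\cc}^*$ in general position and $\varphi = E^{P,*}(\phi,-\nu_\pi)$ with $\phi = \otimes_v \phi_v$. By Lemma~\ref{lem:local_non_vanishing}, each local factor $\lambda \mapsto Z_{\sigma_\pi,v}^\natural(\phi_v,\lambda - \nu_\pi)$ is regular at $\lambda = 0$, and $\varepsilon$ is regular and nowhere vanishing. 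Consequently the only possible singularities of $\lambda \mapsto \cP_\pi(\varphi,\lambda)$ at $\lambda = 0$ come from the numerator of $\cL(\lambda,\pi)$, and they are precisely the affine hyperplanes by which we regularize both $\cP_\pi$ and $\cL$. Multiplying by the common product of affine linear forms and evaluating at $\lambda = 0$ therefore gives the pointwise factorization
\begin{equation*}
    \cP_\pi^*(\varphi) = \varepsilon(0)\,\cL^*(\pi)\,\prod_{v \in \tS} RZ_{\sigma_\pi,v}^\sharp(\phi_v,-\nu_\pi).
\end{equation*}

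From here the two implications are immediate. If $\cL^*(\pi) = 0$, the displayed identity gives $\cP_\pi^*(\varphi) = 0$ for all factorizable $\varphi$; since such vectors span $\cA_{P,\pi}(G)$ and $\cP_\pi^*$ is linear, it vanishes identically. Conversely, if $\cL^*(\pi) \neq 0$, the second assertion of Lemma~\ref{lem:local_non_vanishing} provides, at each $v \in \tS$, a vector $\phi_v \in I_{P_\pi}^G \sigma_{\pi,v}$ with $RZ_{\sigma_\pi,v}^\sharp(\phi_v,-\nu_\pi) \neq 0$; completing with any non-zero vectors at places outside $\tS$ (e.g.\ the normalized spherical vectors underlying the unramified computation) yields a factorizable $\phi$ for which every factor on the right-hand side is non-zero, so that $\cP_\pi^*(\varphi) \neq 0$.

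There is no serious obstacle: all the analytic content (holomorphy of the local factors at the critical point, non-vanishing of each local linear form, and the compatibility between the global and local regularizations encoded in the affine hyperplanes) has been placed upstream in Theorem~\ref{thm:non_tempered_periods}, Theorem~\ref{thm:local_GGP_explicit} and Lemma~\ref{lem:local_non_vanishing}. The only mild point to mention is that the reduction to factorizable $\varphi$ is legitimate because $E^{P,*}(\cdot,-\nu_\pi)$ is surjective onto $\cA_{P,\pi}(G)$ and the restricted tensor product of the $I_{P_\pi}^G \sigma_{\pi,v}$ maps onto $\cA_{P_\pi,\sigma_\pi,-\nu_\pi}(G)$, so that non-vanishing on a single factorizable $\phi$ suffices for the global non-vanishing of $\cP_\pi^*$.
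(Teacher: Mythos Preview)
Your proof is correct and follows exactly the paper's approach: the paper's proof is the single line ``This is a direct consequence of Theorem~\ref{thm:non_tempered_periods} and Lemma~\ref{lem:local_non_vanishing},'' and you have faithfully unpacked that consequence. The additional care you take in justifying the reduction to factorizable vectors and in tracking which factors contribute to the regularization is all correct and implicit in the paper's one-line proof.
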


\begin{proof}
    This is a direct consequence of Theorem~\ref{thm:non_tempered_periods} and Lemma~\ref{lem:local_non_vanishing}.
\end{proof}

\section{Residues of local Zeta integrals}
\label{sec:local_zeta}

In this section, we provide an alternative purely local proof for the explicit non-tempered GGP conjecture from Theorem~\ref{thm:local_GGP_explicit} in the case where $F$ is a $p$-adic field. We keep the notation from \S\ref{sec:IYZ_periods}, so that $G=\GL_n \times \GL_{n+1}$ and $H=\GL_n$.

    \subsection{Preliminaries on local factors}
    \label{sec:prelim_local}

    We recall some results on local intertwining operators and several normalizing factors. Let $q$ be the residual characteristic of $F$. Let $k$ be a positive integer. We fix $P$ a standard parabolic subgroup of $\GL_k$. Let $\psi$ be a non-zero unitary character of $F$.

    \subsubsection{Measures}
    \label{subsec:local_measures}
   If $N$ is the unipotent radical of a parabolic subgroup of $\GL_k$, we have an isomorphism of varieties $N \simeq \bA^m$ for some $m$. We equip $N(F)$ with the measure $\prod_{i=1}^m d_{\psi} x_i$, where $d_{\psi} x_i$ is the $\psi$-autodual measure on $F$. 
    
   For subgroups of $H(F)$, it is convenient to choose measures differently. Let $K_H=H(\oo_F)$ be the standard maximal compact subgroup of $H$, equipped with the probability measure. We equip the $F$-points of the maximal torus $T_{0,H}=T_0 \cap H$ and of the unipotent group $N_{0,H}=N_0 \cap H$ with the Haar measures such that $\vol(K_H \cap T_{0,H}(F))=\vol(K_H \cap N_{0,H}(F))=1$. This determines a Haar measure on $H(F)$ by the Iwasawa decomposition, and we use it to define the Zeta integrals $Z_\tau$ from \eqref{eq:local_zeta_int}.

    \subsubsection{Intertwining operators}
    \label{subsec:local_operator}

    Let $\tau$ be a smooth irreducible unitary representation of $M_P(F)$. Let $Q$ be another standard parabolic subgroup of $\GL_k$ and let $w \in W(P,Q)$. Recall that in \S\ref{subsubsec:blocks} we have chosen a representative $\dot{w} \in G(F)$ of $w$. For $\phi \in I_P^{\GL_k} \tau$ and $\lambda \in \fa_{P,\cc}^*$, we define an unnormalized intertwining operator by
    \begin{equation}
        \label{eq:J_tau_defi}
        J_\tau(w,\lambda) \phi(g)=\int_{(N_Q \cap \dot{w} N_P \dot{w}^{-1})(F) \backslash N_Q(F)} \phi_\lambda(\dot{w}^{-1} ng)dn, \quad g \in \GL_k(F).
    \end{equation}
    Here, $\phi_\lambda$ is the only element in $I_P^{\GL_k} \tau_\lambda$ whose restriction to $\GL_k(\oo_F)$ coincides with the restriction of $\phi$. This integral is absolutely convergent for $\lambda$ in some positive cone, and admits a meromorphic continuation to $\fa_{P,\cc}^*$ by \cite{Sha81}. For $\lambda$ in general position, we obtain $J_\tau(w,\lambda) : I_P^{\GL_k} \tau_\lambda \to I_Q^{\GL_k} w\tau_\lambda$. 

    We have already met these operators in a disguised way in \S\ref{subsubsec:local_normalized_operator}. Indeed, write $M_P=\GL_{k_1} \times \hdots \times \GL_{k_m}$ and $\tau=\tau_1 \boxtimes \hdots \boxtimes \tau_m$ accordingly. If $\alpha \in \Sigma_P$ is the positive root switching $\GL_{k_i}$ and $\GL_{k_j}$ with $1 \leq i < j \leq m$, set $\tau_\alpha=\tau_i \times \tau_j^\vee$. Write
    \begin{equation}
        \label{eq:L_formula}
        n_{\tau}(\alpha,s)=\frac{L(s,\tau_\alpha)}{L(1+s,\tau_\alpha) \varepsilon(s,\tau_\alpha,\psi)}.
    \end{equation}
    Here, the $L$ and $\varepsilon$ factors are those defined in \cite{JPSS83} (see also \cite{Sha83}). In particular, the latter is monomial in $q^{-s}$ and is regular and never-vanishing. Then we set for $\lambda \in \fa_{P,\cc}^*$ in general position
    \begin{equation}
        \label{eq:n_formula}
        n_\tau(w,\lambda)=\prod_{\substack{\alpha \in \Sigma_{P} \\ w \alpha <0}} n_\tau(\alpha,\langle  \lambda, \alpha^\vee \rangle).
    \end{equation}
    The normalized intertwining operator $N_\tau(w,\lambda)$ of \S\ref{subsubsec:local_normalized_operator} is defined by
    \begin{equation}
        \label{eq:N_n_norm}
        N_\tau(w,\lambda)=n_\tau(w,\lambda)^{-1} J_\tau(w,\lambda).
    \end{equation}
    The choice of representatives $\dot{w}$ ensures that $N(w_1 w_2,\lambda)=N(w_1,w_2 \lambda)N(w_2,\lambda)$ by \cite{KS}.

    \subsubsection{$\gamma$-factors}
    \label{subsubsec:gamma}
    We now assume that $\tau$ is generic. Let $\psi_k$ be a generic character of $N_k(F)$ the unipotent radical of the standard Borel of $\GL_k$. Recall that $w_P$ is the longest element in $W(P)$. Let $W_\tau^{w_P \psi_k}$ be a non-zero Whittaker functional for $\tau$ with respect fo $w_P \psi_k$. We then have the Jacquet functional $W_{P,\tau}^{\psi_k}(\cdot,\lambda)$ on $I_P^{\GL_k} \tau_\lambda$ from \eqref{eq:local_Jacquet}. 
    
    Let $w \in W(P,Q)$. Starting from the Whittaker functional $W_{w \tau}^{w w_P \psi_k}$ on $w \tau$ obtained by conjugating with $\dot{w}$, we obtain $W_{Q,w\tau}^{\psi_k}$. By uniqueness of Whittaker models we get a functional equation 
    \begin{equation}
        \label{eq:gamma_factor}
        W_{Q,w\tau}^{\psi_k}(J_\tau(w,\lambda)\phi,w\lambda)=\gamma_{\tau}(w,\lambda)^{-1} W_{P,\tau}^{\psi_k}(\phi,\lambda).
    \end{equation}
    The $\gamma$ factor is a meromorphic function in $\lambda$ which can be described. For any $\alpha \in \Sigma_P$, set 
    \begin{equation}
        \label{eq:gamma_formula}
        \gamma_{\tau}(\alpha,s)=\varepsilon(s,\tau_\alpha,\psi) \frac{L(1-s,\tau_\alpha^\vee)}{L(s,\tau_\alpha)}.
    \end{equation} 
    Then by \cite[Section~11.1]{Kim} the formula is
    \begin{equation}
        \label{eq:gamma_formula_prod}
        \gamma_{\tau}(w,\lambda)=\prod_{\substack{\alpha \in \Sigma_{P} \\ w \alpha <0}} \gamma_\tau(\alpha,\langle \lambda, \alpha^\vee \rangle).
    \end{equation}

    \subsubsection{Bernstein--Zelevinsky classification} \label{subsubsec:BZ_class}

    We recall the main results of the Bernstein--Zelevinsky classification for square integrable representations of $\GL_k(F)$. 
    
    Let $\delta$ be an unitary irreducible square integrable representation of $\GL_k(F)$. By \cite[Theorem~9.3]{Zel}, there exist some integers $d,r \geq 1$ and $\sigma$ an unitary irreducible supercuspidal representation of $\GL_{r}(F)$ such that $\delta$ is the unique irreducible quotient of the induction $\sigma_{-\frac{d-1}{2}} \times \hdots \times \sigma_{\frac{d-1}{2}}$. This is a generalized Steinberg representation and we write $\delta=\St(\sigma,d)$. This quotient can also be realized in the following way. Denote by $P_{\delta}$ the standard parabolic subgroup of $\GL_k$ with standard Levi factor $\GL_r^d$. Let $w_{\delta}^*$ be the longest element in $W(P_{\delta})$, set $\sigma_\delta=\sigma^{\boxtimes d}$ which is a supercuspidal representation of $M_{P_{\delta}}(F)$. We set $\nu_\delta=-\rho_{P_\delta}/r$. By \cite[Lemme~VII.3.2]{Renard} the operator $J_{\sigma_\delta}(w_{\delta}^*,\nu_{\delta})$ is well-defined and by the same argument as in \cite[Théorème~VII.4.2~(i)]{Renard}, its image is an irreducible subrepresentation of $I_{P_{\delta}}^{\GL_k} \sigma_{\delta,-\nu_{\delta}}$. Therefore, it has to be $\delta$.

    We now assume that $\delta$ is a square integrable representation of $M_P(F)$. We decompose $\delta=\delta_1 \boxtimes \hdots \boxtimes \delta_m$. Set $P_{\delta}=(\prod P_{\delta_i}) N_P$ (a standard parabolic subgroup of $G$), $\sigma_{\delta}=\boxtimes \sigma_{\delta_i}$, $w_\delta^*=\prod w_{\delta_i}^*$. Define $\nu_\delta$ as in \eqref{eq:nu_exp_defi}. By exactness of induction, $I_P^{\GL_k} \delta$ is equal to the image of $J_{\sigma_\delta}(w_\delta^*,\nu_\delta)$.

    If now $\pi=\Speh(\delta,d)$ is a Speh representation of $\GL_{kd}(F)$, we can define $P_\pi$, $w_\pi^*$ and $\nu_\pi$ as in \S\ref{subsubsec:residual_blocks}, so that $\pi$ is the image of $J_{\delta}(w_\pi^*,-\nu_\pi)$. We naturally generalize this notation to inductions of Speh's as above.
    
    \subsubsection{Formulae for local factors}
    \label{subsubsec:local_formulae}
    Denote by $\zeta$ the local zeta function $\zeta(s)=(1-q^{-s})^{-1}$. Let $\sigma_{1}$ and $\sigma_2$ be irreducible supercuspidal representations of $\GL_{k_1}(F)$ and $\GL_{k_2}(F)$ respectively. By \cite[Proposition~8.1]{JPSS83}, we have
    \begin{equation}
        \label{eq:cuspi_L}
        L(s,\sigma_1 \times \sigma_2)=\prod_{\substack{q^{-t} \in \cc \\ \sigma_{1,t} \simeq \sigma_2^\vee}}\zeta(s-t),
    \end{equation}
    where we recall that $\sigma_{1,t}=\sigma_{1} \otimes \Val{\det}^{t}$ ($\Val{\cdot}$ being the normalized absolute value on $F$). 

    Now, let $\delta_1$ and $\delta_2$ be irreducible square integrable representations of $\GL_{k_1}(F)$ and $\GL_{k_2}(F)$. Write $\delta_1=\St(\sigma_1,d_1)$ and $\delta_2=\St(\sigma_2,d_2)$. Assume that $d_2 \leq d_1$. By \cite[Theorem~8.3]{JPSS83} we have
    \begin{equation}
        \label{eq:L2L}
        L(s,\delta_1 \times \delta_2)=\prod_{i=1}^{d_2} L\left(s+\frac{d_1-1}{2}+\frac{d_2-2i+1}{2},\sigma_1 \times \sigma_2\right).
    \end{equation}

    \subsubsection{Compatibility of $\gamma$ factors with respect to induction} Let $\delta$ be an irreducible square integrable representation of $M_P(F)$. Denote by $W_{P_\delta,\sigma_\delta}^{M_P,\psi_k}$ the partial Jacquet functional built on $I_{P_\delta}^{M_P} \sigma_\delta$. 

    \begin{lem}
        \label{lem:induce_whittaker}
        The restriction of $W_{P_\delta,\sigma_\delta}^{M_P,\psi_k}(-\nu_\delta)$ to $\delta$ is non-zero and defines a Whittaker functional on this representation.
    \end{lem}

    \begin{proof}
        We can assume that $P=\GL_k$. By \eqref{eq:gamma_factor}, we have for any $\phi \in I_{P_\delta}^{\GL_k} \sigma_\delta$
        \begin{equation*}
            W_{P_\delta, \sigma_\delta}^{\psi_k}(J_{\sigma_\delta}(w_\delta^*,\nu_\delta)\phi,-\nu_\delta)=\gamma_{\sigma_\delta}(w_\delta^*,\nu_\delta)^{-1} W_{P,\sigma_\delta}^{\psi_k}(\phi,\nu_\delta).
        \end{equation*}
        By \eqref{eq:gamma_formula} and \eqref{eq:cuspi_L}, $\gamma_{\sigma_\delta}(w_\delta^*,\nu_\delta)^{-1}$ is well-defined and non-zero. By Rodier's theorem (see \cite[Theorem~1.6]{CS}), there exists $\phi$ such that $W_{P,\sigma_\delta}^{\psi_k}(\phi,\nu_\delta) \neq 0$. This concludes the proof. 
    \end{proof}

    Using Lemma~\ref{lem:induce_whittaker} we obtain the following compatibility of $\gamma$-factors. 

    \begin{lem}
        \label{lem:inductive_gamma}
        Let $Q$ be a standard parabolic subgroup of $\GL_k$ and let $w \in W(P,Q)$. Then for $\lambda \in \fa_{P,\cc}^*$ in general position we have the equality
        \begin{equation*}
            \gamma_\delta(w,\lambda)=\gamma_{\sigma_{\delta}}(w,\lambda-\nu_\delta).
        \end{equation*}
    \end{lem}

    \begin{proof}
        If we take $W_{P_\delta,\sigma_\delta}^{M_P,w_P \psi_k}(-\nu_\delta)$ as a Whittaker functional on $\delta$, which is possible by Lemma~\ref{lem:induce_whittaker}, we see that for every $\phi \in I_P^{\GL_k} \delta$ (identified with a submodule of $I_{P_\delta}^{\GL_k} \sigma_{\delta,-\nu_\delta}$) we have $W_{P,\delta}^{\psi_k}(\phi,\lambda)=W_{P,\sigma_\delta}^{\psi_k}(\phi,\lambda-\nu_\delta)$. But given the definition in \eqref{eq:J_tau_defi} we also have $J_\tau(w,\lambda)\phi=J_{\sigma_\tau}(w,\lambda-\nu_\delta)\phi$. We conclude by noting that $w\sigma_{\delta}=\sigma_{w \delta}$ and by also applying this discussion to $w \delta$.
    \end{proof}

    \subsection{Asymptotics of Whittaker functionals}
    \label{sec:asymptotic_whitt}
    Let $k \geq 1$. Let $P$ be a standard parabolic subgroup of $\GL_k$. For simplicity, we will simply write $\psi$ instead of $\psi_k$, so that all Whittaker functionals are denoted by $W^\psi$.

    \subsubsection{Positive cones in $A_P$}

    For any standard parabolic subgroup $Q$ of $\GL_k$, we write $A_Q$ for the split center of $M_Q$. For any $\lambda \in \fa_{Q,\cc}^*$ we write $\lambda$ for the character $t \in A_Q(F) \mapsto \exp( \langle \lambda, H_Q(t) \rangle)$. If $w \in W(Q)$, we denote by $w\lambda$ the character $\lambda(w^{-1} \cdot w)$ of $A_{w.Q}(F)$. 
    
    For every $\varepsilon >0$, define
        \begin{equation*}
            A_Q[\leq \varepsilon]=\{ a \in A_Q(F) \; \big| \; \forall \alpha \in \Delta_Q, \; \Val{\alpha(a)} \leq \varepsilon \}.
        \end{equation*}
    Our goal is to describe the behavior of the Jacquet functional $W_{P,\delta}^\psi$ on these $A_Q[\leq \varepsilon]$.

    \subsubsection{The supercuspidal case}

    We begin with $\sigma$ an irreducible supercuspidal representation of $M_P(F)$. We denote by $\chi_\sigma$ its central character. We fix $W_\sigma^\psi$ a Whittaker functional on $\sigma$. Let $Q$ be a standard parabolic subgroup of $G$ containing $P$. For simplicity, we will write $I_P^{M_Q}$ for $I_{P \cap M_Q}^{M_Q}$. Then we have the partial induction $I_{P}^{M_Q} \sigma$ and the partial Jacquet functional $W_{P,\sigma}^{M_Q,\psi}$ defined on it. If $\phi \in I_P^{\GL_k} \sigma$, we denote again by $W_{P,\sigma}^{M_Q,\psi}(\phi,\lambda)$ the image of $\phi_{|M_Q(F)} \in \delta_Q^{1/2} \otimes I_{P}^{M_Q} \sigma$ by $W_{P,\sigma}^{M_Q,\psi}(\cdot,\lambda)$. 
    
    Let $w_Q$ be the longest element in $W(Q)$. We set $Q'=w_Q . Q$. We also have $w_Q \in W(P)$ and we set $R=w_Q .P$. Then we can also consider $w_R \in W(R)$ and set $R'=w_R .R$. Note that $w_R=w_P^Q w_Q^{-1}$ where $w_P^Q$ is the longest element in $W(P\cap M_Q)$. 

    \begin{lem}
        \label{lem:non_zero}
        For $\lambda \in \fa_{P,\cc}^*$ in general position, there exists $\phi \in I_{R}^{\GL_k} w_Q \sigma$ and $\varepsilon >0$ such that for any $a \in A_Q[\leq \varepsilon]$ we have 
        \begin{equation}
            \label{eq:partial_whitt}
            W_{R,w_Q \sigma}^{\psi}(a,\phi,w_Q \lambda)=\delta_Q^{\frac{1}{2}}(a)(\lambda \chi_{\sigma})(a) W_{P,\sigma}^{M_Q,\psi}(J_{w_Q \sigma}(w_Q^{-1},w_Q \lambda) \phi,\lambda),
        \end{equation}
        and moreover such that both sides of this equality are non-zero.
    \end{lem}

    \begin{proof}
        We set $\Sigma_\lambda=I_P^{M_Q} \sigma_\lambda$. If $\lambda=0$, we drop the subscript. We have an isomorphism $\phi \in I_R^{\GL_k} w_Q \sigma \mapsto \Phi(\phi) \in I_{Q'}^{\GL_k} w_Q\Sigma$. We denote by $W_{Q',w_Q\Sigma_\Lambda}^{\psi}$ the Jacquet integral on $I_{Q'}^{\GL_k} w_Q \Sigma_\lambda$ built from the Whittaker functional $W_{R,w_Q \sigma}^{M_{Q'},w_Q^{-1}\psi}(\cdot,w_Q \lambda)$ on $w_Q \Sigma_\lambda$. For any $\phi \in I_R^{\GL_k} w_Q \sigma$ with support in $R(F) w_R^{-1} N_{R'}(F)$, we see that 
        \begin{equation}
            \label{eq:induced_whittaker}
            W_{R,w_Q \sigma}^{\psi}(\phi,w_Q \lambda)=W_{Q',w_Q\Sigma_\lambda}^{\psi}(\Phi(\phi),0).
        \end{equation}
        By \cite[Theorem~1.6]{CS}, this implies that \eqref{eq:induced_whittaker} holds for any $\phi \in I_R^{\GL_k} w_Q \sigma$. 

        Let $\Phi=\Phi(\phi) \in I_{Q'}^{\GL_k} w_Q \Sigma$ with support in $Q'(F) w_Q N_Q(F)$. Then using \eqref{eq:induced_whittaker} we obtain for $a \in A_Q(F)$
         \begin{align}
            W_{R,w_Q \sigma}^{\psi}(a,\phi,w_Q \lambda)&=\int_{N_{Q}(F)} W_{R,w_Q \sigma}^{M_{Q'},w_Q^{-1}\psi}(\Phi(w_Qn a),w_Q \lambda) \overline{\psi(n)}dn  \nonumber   \\
            &= \delta_Q^{\frac{1}{2}}(a) (\lambda \chi_{\sigma})(a)\int_{N_Q(F)}  W_{R,w_Q \sigma}^{M_{Q'},w_Q^{-1}\psi}(\Phi(w_Qn),w_Q \lambda)\overline{\psi(a^{-1}na)}dn \label{eq:whitt_J}
        \end{align}
        Because $\Phi$ has compact support modulo $Q'(F)$, the integral takes place over a compact set $\omega_Q$. But there exists $\varepsilon$ such that $a \in A_Q[\leq \varepsilon]$ implies $\overline{\psi(a^{-1}na)}=1$ for every $n \in \omega_Q$. By induction by stages for intertwining operators (\cite[Proposition~VII.3.5~(ii)]{Renard}) and the integral formula for $J$ from \eqref{eq:J_tau_defi}, we see that \eqref{eq:whitt_J} reduces to \eqref{eq:partial_whitt} (note that $w_Q^{-1} \psi$ is replaced by $\psi$ by conjugation).

        It remains to show that we can choose $\phi$ such that \eqref{eq:partial_whitt} is non-zero. We may take $\lambda$ in general position so that $\Sigma_\lambda$ is irreducible. Denote by $J_{w_Q \Sigma}(w_Q^{-1},w_Q \lambda)$ the intertwining operator $I_{Q'}^{\GL_k} w_Q \Sigma_\lambda \to I_{Q}^{\GL_k} \Sigma_\lambda$ obtained by induction by stages. Let $\varphi \in I_P^{M_Q} \sigma$ such that $W_{P,\sigma}^{M_Q,\psi}(\varphi,\lambda) \neq 0$. By \cite[Proposition~VII.3.4]{Renard}, there exists $\Phi \in I_{Q'}^{\GL_k}w_Q \Sigma_\lambda$ with support in $Q'(F)w_Q N_Q(F)$ such that we have $J_{w_Q \Sigma}(w_Q^{-1},w_Q \lambda)(\Phi)(1) \neq 0$. Because this condition on the support is stable under the action by right translation of $M_Q(F)$, there exists such $\Phi$ which satisfies $J_{w_Q \Sigma}(w_Q^{-1},w_Q \lambda)(\Phi)(1) =\varphi$. If $\Phi=\Phi(\phi)$, this yields $W_{P,\sigma}^{M_Q,\psi}(J_{w_Q \sigma}(w_Q^{-1},w_Q \lambda) \phi)\neq 0$, which concludes.
    \end{proof}
    
    We can now state our result on the asymptotics of Whittaker functionals induced from supercuspidal representations. It is a generalization of a formula used in the proof of \cite[Theorem~3.4]{CasSha}, and was inspired by \cite{CasLetter}.

    \begin{prop}
        \label{prop:Whittaker_explicit}
            Let $Q$ be any standard parabolic subgroup of $\GL_k$. For every $\phi \in I_P^{\GL_k} \sigma$ there exists $\varepsilon >0$ such that for $\lambda \in \fa_{P,\cc}^*$ in general position and $a \in A_Q[\leq \varepsilon]$ we have
            \begin{equation}
            \label{eq:asymptotic_Whittaker}
                W_{P,\sigma}^{\psi}(a,\phi,\lambda)=\sum_{w \in W(P;Q)} c_\sigma^Q(w,\lambda) w(\lambda\chi_\sigma)(a) \delta_{Q}^\frac{1}{2}(a) W_{Q_w,w\sigma}^{M_Q,\psi} (N_\sigma(w,\lambda)\phi,w \lambda),
                \end{equation}
                where
                \begin{equation}
                \label{eq:c_w^Q_formula}
                    c_\sigma^Q(w,\lambda)=n_\sigma(w,\lambda)\gamma_\sigma(w,\lambda) \gamma_{w\sigma^\vee}(w_Q,-w\lambda)^{-1}.
                \end{equation}
        \end{prop}
        
        \begin{proof}
            Let $(I_P^{\GL_k} \sigma_\lambda)_{N_Q}$ be the (unnormalized) Jacquet module of $I_P^{\GL_k} \sigma_\lambda$ with respect to $Q$. This is a representation of $M_Q(F)$. Let $(I_P^{\GL_k} \sigma_\lambda)_{\psi,N_k}$ be the twisted Jacquet module of $(I_P^{\GL_k} \sigma_\lambda) \otimes \psi^{-1}$ with respect to the minimal parabolic $B_k$ of $\GL_k$. The Jacquet functional factors through $(I_P^{\GL_k} \sigma_\lambda)_{\psi,N_k}$. By \cite[Proposition~6.4]{CS} there exists a map of $\cc$-vector spaces $s_Q : (I_P^{\GL_k} \sigma_\lambda)_{N_Q} \to (I_P^{\GL_k} \sigma_\lambda)_{\psi,N_k}$, and by \cite[Lemma~3.6]{CasSha} for every $\phi \in I_P^{\GL_k} \sigma$ there exists $\varepsilon >0$ such that for all $a \in A_Q[\leq \varepsilon]$ we have
        \begin{equation}
            \label{eq:asymptotic_whitt}
            W_{P,\sigma}^{\psi}(a,\phi,\lambda)=W_{P,\sigma}^\psi \left( s_Q \left( I_{\lambda,N_Q}(a) \phi_{\lambda,N_Q} \right),\lambda\right),
        \end{equation}
        where $\phi_{\lambda,N_Q}$ is the image of $\phi_\lambda$ in $(I_P^{\GL_k} \sigma_\lambda)_{N_Q}$ and $I_{\lambda,N_Q}$ is the action of $M_Q(F)$ on this representation. It is easily seen by \cite[Section~4.1]{Cas} that $\varepsilon$ can be chosen independently from $\lambda$. 
        
        More precisely, the application $s_Q$ is the composition of the canonical lifting $(I_P^{\GL_k} \sigma_\lambda)_{N_Q} \to I_P^{\GL_k} \sigma_\lambda$ from \cite[Section~4.1]{Cas} with the projection to $(I_P^{\GL_k} \sigma_\lambda)_{\psi,N_k}$. It follows that the map $W_{P,\sigma}^\psi \circ s_Q$ is a Whittaker functional on $(I_P^{\GL_k} \sigma_\lambda)_{N_Q}$. For $\lambda$ in general position we have a morphism
          \begin{equation}
          \label{eq:explicit_Jacquet}
                \phi \in I_P^{\GL_k} \sigma_\lambda \mapsto \sum_{w \in W(P;Q)} \left(N_\sigma(w,\lambda)\phi\right)_{|M_Q(F)} \in \bigoplus_{w \in W(P;Q)} \delta_Q^{\frac{1}{2}} \otimes I_{Q_w}^{M_Q} w \sigma_\lambda.
            \end{equation}
        It factors through $(I_P^{\GL_k} \sigma_\lambda)_{N_Q}$. By \cite[Proposition~VII.3.4]{Renard}, the projection on each components of the RHS is non-zero. Because $\lambda$ is in general position, these components are all irreducible. By computing their Jacquet modules using the geometric lemma \cite[Théorème~VI.5.1]{Renard}, we see that they are also mutually non-isomorphic. But by applying the geometric lemma to $I_P^{\GL_k} \sigma_\lambda$ instead, we know that $(I_P^{\GL_k} \sigma_\lambda)_{N_Q}$ is isomorphic to the RHS of \eqref{eq:explicit_Jacquet}, so that this map must be an isomorphism. Because the representations $\delta_Q^{\frac{1}{2}} \otimes I_{Q_w}^{M_Q} w\sigma_\lambda$ are generic, we conclude by \eqref{eq:asymptotic_whitt} that there exist constants $c_\sigma^Q(w,\lambda)$ such that \eqref{eq:asymptotic_Whittaker} holds. We now have to compute them.

        We begin with $c_\sigma^Q(1,\lambda)$, so that in particular $P \subset Q$. Let $\lambda$ be in general position. Let $\phi \in I_R^{\GL_k}(w_Q \sigma)$ be given by Lemma~\ref{lem:non_zero}. By definition, we have 
        \begin{equation*}
            W_{R,w_Q \sigma}^{\psi}(\phi,w_Q \lambda)=\gamma_{w_Q \sigma}(w_Q^{-1},w_Q \lambda)W_{P,\sigma}^{\psi}(J_{w_Q\sigma}(w_Q^{-1},w_Q\lambda)\phi,\lambda).
        \end{equation*}
        But it is easily checked using \eqref{eq:gamma_factor} that $\gamma_{w_Q \sigma}(w_Q^{-1},w_Q \lambda)=\gamma_{\sigma^\vee}(w_Q,-\lambda)$. It now follows from \eqref{eq:asymptotic_Whittaker}  that the formula for $c_\sigma^Q(1,\lambda)$ holds.

        We now compute $c_\sigma^Q(w,\lambda)$ for $w \in W(P;Q)$. By \eqref{eq:N_n_norm} and \eqref{eq:gamma_factor}, we obtain for any $\phi \in I_P^{\GL_k} \sigma$ and $\lambda$ in general position
        \begin{equation*}
            W_{Q_w,w\sigma}^\psi(N_\sigma(w,\lambda)\phi,w\lambda)=n_{\sigma}(w,\lambda)^{-1}\gamma_{\sigma}(w,\lambda)^{-1} W_{P,\sigma}^\psi(\phi,\lambda).
        \end{equation*}
        By isolating the terms in $w(\lambda \chi_\sigma)$ on each sides we obtain
        \begin{equation*}
            c_{w \sigma}^Q(1,w\lambda)=n_{\sigma}(w,\lambda)^{-1}\gamma_{\sigma}(w,\lambda)^{-1}c_{\sigma}^Q(w,\lambda).
        \end{equation*}
        This concludes as we already know that $c_{w \sigma}^Q(1,w\lambda)=\gamma_{w \sigma^\vee}(w_Q,-w\lambda)^{-1}$ by the previous case.
        
        \end{proof}

    Let $Q$ be a standard parabolic subgroup of $\GL_k$ and let $w \in W(P;Q)$. Using \eqref{eq:L_formula} and \eqref{eq:gamma_formula}, the function $c_\sigma^Q(w,\lambda)$ can be explicitly written in terms of $L$-factors. More precisely, let $w_{Q_w}^Q$ be the longest element in $W(Q_w \cap M_Q)$. Set
    \begin{equation*}
        n_{w \sigma}^{Q}(w_{Q_w}^Q,w\lambda)=\prod_{\alpha \in \Sigma_{Q_w}^Q}n_{w \sigma}(\alpha,\langle w \lambda,\alpha^\vee \rangle).
    \end{equation*}
    Then we have
    \begin{equation}
        \label{eq:explicit_c}
        c_\sigma^Q(w,\lambda)=b(\lambda,\sigma)^{-1}n_{w \sigma}^{Q}(w_{Q_w}^Q,w\lambda)^{-1} \prod_{\alpha \in \Sigma_{Q_w}} \frac{L(\langle w_Q w \lambda, \alpha^\vee \rangle,(w_Qw\sigma)_\alpha)}{\varepsilon(\langle w_Q w \lambda, \alpha^\vee \rangle,(w_Qw\sigma)_\alpha,\psi)}.
    \end{equation}

    \subsubsection{The case of square integrable representations}
    We now take $\delta$ an irreducible square integrable representation of $M_P(F)$. We have $\sigma_\delta$ the supercuspidal representation of $M_{P_\delta}(F)$ given by the Bernstein-Zelevinsky classification recalled in \S\ref{subsubsec:BZ_class}. We will consider $I_P^{\GL_k} \delta$  as a subrepresentation of $I_{P_\delta}^{\GL_k} \sigma_{\delta,-\nu_\delta}$. 

    \begin{lem}
        \label{lem:explicit_geom_lemma}
        Let $Q$ be a standard parabolic subgroup of $\GL_k$ and let $w \in W(P_\delta;Q)$. Then $N_{\sigma_\delta}(w,\lambda-\nu_\delta)$ is well-defined for $\lambda \in \fa_{P,\cc}^*$ in general position. When restricted to $I_P^{\GL_k} \delta$, it is zero unless $w \in {}_Q W_P$.
    \end{lem}

    \begin{proof}
        That $N_{\sigma_\delta}(w,\lambda-\nu_\delta)$ is well-defined follows from Theorem~\ref{thm:N} and the fact that $-\nu_\delta \in \fa_{P_\delta}^{P,*,+}$. We have to study the restriction of the composition $N_{\sigma_\delta}(w,w_\delta^* \mu)J_{\sigma_\delta}(w_\delta^*,\mu)$ for $\mu \in \fa_{P,\cc}^*+\nu_\delta$, both operators being defined there generically. If we assume that $w \notin {}_Q W_P$, there exists a positive root $\alpha \in \Sigma_{P_\delta}^P$ such that $w \alpha<0$, which is equivalent to the existence of $\beta \in \Sigma_{P_\delta}^P$ with $w w_\delta^* \beta >0$. Without loss of generality, we can assume that $\beta \in \Delta_{P_\delta}^P$. By Theorem~\ref{thm:N}, the composition 
        \begin{equation*}
            n_{\sigma_\delta}(w_\delta^*,\mu)^{-1} N_{\sigma_\delta}(w,w_\delta^* \mu)J_{\sigma_\delta}(w_\delta^*,\mu)=N_{\sigma_\delta}(ww_\delta^*,\mu)
        \end{equation*}
        is regular generically along the affine hyperplane $\langle \mu,\beta^\vee \rangle=-1$ which contains $\fa_{P,\cc}^*+\nu_\delta$. But by the formulae for the normalizing factor in \eqref{eq:L_formula} and for Rankin--Selberg $L$-functions of supercuspidal representations in \eqref{eq:cuspi_L}, we see that $n_{\sigma_\delta}(w_\delta^*,\mu)^{-1}$ has a pole along this affine hyperplane. Therefore, $N_{\sigma_\delta}(w,w_\delta^* \mu)J_{\sigma_\delta}(w_\delta^*,\mu)$ must be zero there, which concludes the proof.
    \end{proof}

    \begin{prop}
        \label{prop:Whittaker_explicit_discrete}
            Let $Q$ be any standard parabolic subgroup of $\GL_k$. For every $\phi \in I_P^{\GL_k} \delta$ there exists $\varepsilon >0$ such that for $\lambda \in \fa_{P,\cc}^*$ in general position and $a \in A_Q[\leq \varepsilon]$ we have
            \begin{equation}
            \label{eq:asymptotic_Whittaker_discrete}
                W_{P,\delta}^{\psi}(a,\phi,\lambda)=\sum_{\substack{w \in {}_Q W_P \\ P_\delta \subset P_w}} c_{\sigma_\delta}^Q(w,\lambda-\nu_\delta) w((\lambda-\nu_\delta)\chi_{\sigma_\delta})(a) \delta_{Q}^\frac{1}{2}(a) W_{w.P_\delta,w\sigma_\delta}^{M_Q,\psi} (N_{\sigma_\delta}(w,\lambda-\nu_\delta)\phi,w (\lambda-\nu_\delta)),
                \end{equation}
                where $c_{\sigma_\delta}^Q(w,\lambda-\nu_\delta)$ is defined in \eqref{eq:c_w^Q_formula} and well-defined on $\fa_{P,\cc}^*$. 
    \end{prop}

    \begin{proof}
        By Lemma~\ref{lem:induce_whittaker}, we may take $W_{P_\delta,\sigma_\delta}^{M_P,\psi}$ as Whittaker functional on $I_P^{\GL_k} \delta$. Therefore, Proposition~\ref{prop:Whittaker_explicit_discrete} is a direct consequence of Proposition~\ref{prop:Whittaker_explicit} and Lemma~\ref{lem:explicit_geom_lemma} once we know that for every $w \in W(P_\delta;Q)$ the affine hyperplane $\fa_{P,\cc}^*-\nu_\delta \subset \fa_{P_\delta,\cc}^*$ is not contained in any of the singularities of $c_{\sigma_\delta}^Q(w,\mu)$. But this follows easily from the expression given in \eqref{eq:explicit_c}.
    \end{proof}

    \subsection{A formula for the Zeta function}
    \label{sec:formula_zeta}

    Let $Q_{n+1}$ be a standard parabolic subgroup of $\GL_{n+1}$, with standard Levi factor $M_{Q_{n+1}}=\prod_{i=1}^m \GL_{n_i}$. We denote by $Q$ the Rankin--Selberg parabolic subgroup of $G$ associated to the pair $(Q_{n+1},m)$ by the bijection of Corollary~\ref{cor:param_RS}. This is simply the standard parabolic subgroup $(Q_{n+1}\cap \GL_n) \times Q_{n+1}$ of $G$, and every standard Rankin--Selberg parabolic subgroup of $G$ arises in this way.

We identify the group of cocharacters $X_*(A_Q)$ of $A_{Q}$ with a lattice in $\fa_{Q}$. Set $\Lambda_{Q,H}=X_*(A_{Q}) \cap \fz_Q$, which is a lattice of $\fz_Q$ (see \eqref{eq:zp_descri}). We will denote its group law additively, and its neutral element by zero. Recall that we have defined in \S\ref{subsubsec:RS_coord} a set $\Delta_{Q,H}$. These are the linear forms on $\fz_Q$ which are obtained by restricting those in $\Delta_{Q_{n+1}}$, and this defines a bijection between these two sets. For every $M \in \rr$, set
\begin{equation*}
    \Lambda_{Q,H}[\geq M]=\{ a \in \Lambda_{Q,H} \; | \; \forall \alpha \in \Delta_{Q,H}, \; \langle \alpha,a \rangle \geq M \}.
\end{equation*}
We identify all the groups $\Lambda_{Q,H}$ as subgroups of $\Lambda_{P_0,H}$, where we recall that $P_0$ is the standard Borel of $G$. We now state a combinatorial lemma on partitions of $\Lambda_{P_0,H}$.

\begin{lem}
\label{lem:partitions}
    For every $a \in \Lambda_{P_0,H}$, choose $M_a \in \zz$. Then for every $M \in \zz$ there exists a finite family $(a_{i,Q})$ of elements in $\Lambda_{P_0,H}[\geq M]$, indexed by standard Rankin--Selberg parabolic subgroups $Q$ and $1 \leq i \leq i_Q$ for some integer $i_Q$, such that
    \begin{equation*}
        \Lambda_{P_0,H}[\geq M]=\bigsqcup_Q \bigsqcup_{i=1}^{i_Q} \left(a_{i,Q} + \Lambda_{Q,H}[\geq M_{a_{i,Q}}']\right),
    \end{equation*}
    for some $M_{a_{i,Q}}' \geq M_{a_{i,Q}}$.
\end{lem}

\begin{proof}
    We prove the stronger statement: for every standard Rankin--Selberg parabolic subgroup $R$ of $G$ and every $M$ we have a decomposition of the form
    \begin{equation*}
        \Lambda_{R,H}[\geq M]=\bigsqcup_{Q \supset R} \bigsqcup_{i=1}^{i_Q} \left( a_{i,Q} + \Lambda_{Q,H}[\geq M_{a_{i,Q}}'] \right).
    \end{equation*}
    Let $M_0$ be the integer associated to $0 \in \Lambda_{R,H}$. Assume that $M_0 > M$. For each subset $I \subset \Delta_{R,H}$, set
    \begin{equation*}
        \Lambda_{R,I,H}[M_0,M]=\left\{ a \in \Lambda_{R,H} \; \middle| \begin{array}{l}
           \forall \alpha \in I, \; M_0 > \langle \alpha, a \rangle \geq M, \\
            \forall \alpha \in \Delta_{R,H} \setminus I, \; \langle \alpha,a \rangle \geq M_0. 
        \end{array} \right\}.
    \end{equation*}
    Note that $\Delta_{R,H}$ is a basis of the dual lattice of $\Lambda_{R,H}$. The map
    \begin{equation}
        \label{eq:bij_RS_parab}
        Q \mapsto \Delta_{R,H}^Q := \{ \alpha \in \Delta_{R,H} \; | \; \alpha_{| \fz_Q}=0 \}
    \end{equation}
    induces a bijection between the set of standard Rankin--Selberg parabolic subgroups of $G$ containing $R$, and the subsets of $\Delta_{R,H}$. Moreover, $\Delta_{Q,H}$ is exactly the set of the restrictions of the elements in $\Delta_{R,H} \setminus \Delta_{R,H}^Q$ to $\fz_Q$. We denote by $I \mapsto R_I$ the inverse of \eqref{eq:bij_RS_parab}. Note that $R_{\emptyset}=R$. If $I \neq J$ we have $\Lambda_{R,I,H}[M_0,M] \cap \Lambda_{R,J,H}[M_0,M]=\emptyset$, and moreover for every $I$ there exist $a_{1,I}, \hdots, a_{i_I,I} \in \Lambda_{R,H}[\geq M]$ for some integer $i_I$ such that
    \begin{equation*}
        \Lambda_{R,I,H}[M_0,M]=\bigsqcup_{i=1}^{i_I} \left(a_{i,I} + \Lambda_{R_I,H}[\geq M_0] \right).
    \end{equation*}
    It follows that 
    \begin{equation*}
        \Lambda_{R,H}[\geq M]=\Lambda_{R,H}[\geq M_0] \bigsqcup \bigsqcup_{R_I \supsetneq R} \bigsqcup_{i=1}^{i_I} \left(a_{i,I}+ \Lambda_{R_I,H}[\geq M_0] \right).
    \end{equation*}
    We can now conclude by decreasing induction on the rank of $R$ up to replacing the function $a \mapsto M_a$ by $a \mapsto M_{a_{i,I} + a}$ for each $I$ and $i$.
\end{proof}

Take the elements $a_{i,Q}$ given by Lemma~\ref{lem:partitions}. For every $Q$ and every $1 \leq i \leq i_Q$, there exists $a'_{i,Q} \in \Lambda_{Q,H}$ such that $\Lambda_{Q,H}[\geq M'_{a_{i,Q}}]=a'_{i,Q}+\Lambda_{Q,H}[\geq 0]$. Then we set $b_{i,Q}= a_{i,Q}a'_{i,Q}$. We also identify $X_*(A_Q)$ with a subgroup of $A_Q$ by evaluating at $\varpi$ a uniformizer of $F$. 

Let $P$ be a standard parabolic subgroup of $G$. Let $\delta$ be an irreducible square integrable representation of $M_P(F)$, and let $\sigma_\delta$ and $\nu_\delta$ be respectively the associated supercuspidal representation of $M_{P_\delta}(F)$ and element of $\fa_{P_\delta}^*$ from \S\ref{subsubsec:BZ_class}. To ease notation, we simply write $\sigma$ for $\sigma_\delta$. We denote again by $\chi_{\sigma}$ an element in $\fa_{P_{\delta},\cc}^*$ such that for every $a \in X_*(A_{P_\delta})$ we have $\chi_{\sigma}(a)=q^{-\langle \chi_{\sigma},a \rangle}$. We now write a formula for the Zeta function.

\begin{prop}
\label{prop:Zeta_unfold_local}
    For every $\phi \in I_{P}^G \delta$, there exists a finite family $(b_{i,Q})$ such that for $\lambda \in \fa_{P,\cc}^*$ in general position we have
    \begin{align}
        Z_{\delta}(\phi,\lambda)=&\sum_{Q} \sum_{\substack{w \in {}_Q W_{P} \\ P_\delta \subset P_{w}}} c_{\sigma}^Q(w,\lambda-\nu_\delta) \prod_{\varpi^\vee \in \hat{\Delta}^\vee_{Q,H}} \zeta(\langle w (\lambda-\nu_\delta+\chi_{\sigma}) + \underline{\rho}_{Q},\varpi^\vee \rangle) \nonumber \\
        &\times \sum_{i=1}^{i_Q} \delta_{P_{0,H}}^{-1}(b_{i,Q})W_{w.P_\delta,w\sigma}^{M_Q,\psi_0}\left(b_{i,Q},R(e_{K_H}) N_{\sigma}(w,\lambda-\nu_\delta)\phi,w(\lambda-\nu_\delta) \right). \label{eq:Zeta_unfold_local}
    \end{align}
    Here $Q$ ranges over the standard Rankin--Selberg parabolic subgroups of $G$, $\hat{\Delta}^\vee_{Q,H}$ is the set of coweights defined in \S\ref{subsubsec:RS_coord}, and $e_{K_H}$ is the normalized characteristic function of $K_H$. 
\end{prop}

\begin{proof} 
    By the condition on the support of the Whittaker functions from \cite[Proposition~6.1]{CS}, the definition of $Z_{\delta}$ in \eqref{eq:local_zeta_int} and the Iwasawa decomposition, there exists $M \in \zz$ such that
    \begin{equation}
    \label{eq:Z_first_expansion}
        Z_{\delta}(\phi,\lambda)=\sum_{a \in \Lambda_{P_0,H}[\geq M]} \delta_{P_{0,H}}^{-1}(a)W_{P,\delta_0}^\psi(a,R(e_{K_H}) \phi,\lambda) .
    \end{equation}
    We will show below that this sum is absolutely convergent for $\Re(\lambda)$ in some open subset. For each $a \in \Lambda_{P_0}[\geq M]$, let $\varepsilon_a$ be the element given for $I_{\lambda}(a)(R(e_{K_H})\phi)$ in Proposition~\ref{prop:Whittaker_explicit_discrete}, where $I_\lambda$ is the action of $G(F)$ on $I_{P}^G \delta_{\lambda}$. Then choose $M_a \in \zz$ such that $q^{-M_a} < \varepsilon_a$. By Proposition~\ref{prop:Whittaker_explicit_discrete} and Lemma~\ref{lem:partitions}, we obtain a finite family $(b_{i,Q})$ such that 
    \begin{align}
        Z_{\delta}(\phi,\lambda)=&\sum_{Q} \sum_{\substack{w \in {}_Q W_{P} \\ P_\delta \subset P_{w}}} c_{\sigma}^Q(w,\lambda-\nu_\delta) \sum_{a \in \Lambda_{Q,H}[\geq 0]} (w(\lambda-\nu_\delta + \chi_{\sigma})+ \underline{\rho}_{Q})(a) \nonumber \\
        &\times \sum_{i=1}^{i_Q} \delta_{P_{0,H}}^{-1}(b_{i,Q})W_{w.P_\delta,w\sigma}^{M_Q,\psi_0}\left(b_{i,Q},R(e_{K_H}) N_{\sigma}(w,\lambda-\nu_\delta)\phi,w(\lambda-\nu_\delta) \right). \label{eq:first_zeta_expansion}
    \end{align}
    These manipulations are legitimate if the sums $\sum_{a \in \Lambda_{Q,H}[\geq 0]} (w(\lambda-\nu_\delta + \chi_{\sigma})+\underline{\rho}_Q)(a)$ are absolutely convergent. But they are for any $\lambda \in \fa_{P,\cc}^*$ such that for all standard Rankin--Selberg parabolic subgroups $Q$, all $w \in {}_Q W_{P}$ and all $\varpi^\vee \in \hat{\Delta}^\vee_{Q,H}$ we have $\langle \Re(w (\lambda-\nu_\delta+\chi_{\sigma})) + \underline{\rho}_{Q},\varpi^\vee \rangle > 0$. Using the explicit description of \S\ref{subsubsec:RS_coord}, we see that this defines a non-empty open set of $\fa_{P,\cc}^*$. This can also be seen by translating $\lambda$ by an element $s \in \fa_G^*$ with $s \gg 0$. In any case, we conclude that \eqref{eq:Zeta_unfold_local} holds for $\lambda$ in this subset, and that it holds for $\lambda$ in general position by analytic continuation.
\end{proof}

\subsection{Computation of the residues}

For the rest of this section, we let $\Pi$ be a smooth irreducible representation of $G(F)$ of weak Arthur type with relevant parameter. We define the representations $\pi$ of $M_P(F)$ and $\delta_\pi$ of $M_{P_\pi}(F)$ as in \S\ref{subsubsec:setting_local_GGP}. We also have the associated supercuspidal representation $\sigma_\delta$ of $M_{P_\delta}(F)$ from the Bernstein--Zelevinsky classification of \S\ref{subsubsec:BZ_class}. Therefore, $P_\delta \subset P_\pi \subset P$. Because the representation $\pi$ is fixed, we will simply write $\delta$ for $\delta_\pi$ and $\sigma$ for $\sigma_\delta$. We also have the space $\fa_{\pi,\cc}^*$ as well as $\nu_\Pi$ from \S\ref{subsubsec:setting_local_GGP}. 

Our goal is to compute $RZ^\natural_{\delta}(\cdot,\lambda)$ the restriction of $Z_{\delta}^\natural(\cdot,\lambda)$ to $\fa_{\pi,\cc}^*-\nu_\pi$ for $\lambda$ in general position. The calculation mirrors that of Proposition~\ref{prop:easy_residue}, but the combinatorics is slightly more involved.

\subsubsection{Preliminary notation}
\label{subsubsec:preliminary_notation}

The pair $(P,\pi)$ is of the same combinatorial shape as the global "relevant inducing pairs" $\Pi_H$ from \S\ref{subsubsec:relevant_inducing}. In particular, we can define the two sets $L_+$ and $L_-$ of affine linear forms on $\fa_{P_\pi,\cc}^*$ as in \eqref{eq:Lambda} and \eqref{eq:Lambda'}, and the spaces $\cH_+$ and $\cH_-$. Recall that $\cH_+ \cap \cH_-=\fa_{\pi,\cc}^*-\nu_\pi$. We had described some orders on the elements of $L_+$ in \eqref{eq:first_order} and \eqref{eq:second_order}. We denote the corresponding ordered sets by $L_{n+1}^\uparrow$ and $L_{n}^\uparrow$ respectively. For $m \in \{n,n+1\}$, if $f$ is a meromorphic function on $\fa_{P_{\pi},\cc}^*$ we denote by $\underset{L_m^\uparrow}{\mathrm{Rest}} \; f$ its restriction to $\cH_+$, computed by restricting to the affine hyperplanes in the prescribed order. This object has the same properties as the iterated residue from \S\ref{subsubsec:naive_residue}.

Recall that we have defined in \eqref{eq:Q_cL_defi} a Rankin--Selberg parabolic subgroup $P_{\res}$. With the notation of \eqref{eq:delta_defi}, assume that each $\delta_{1,i}$ (resp. each $\delta_{2,i}$) is a square integrable representation of $\GL_{r(1,i)}$ (resp. $\GL_{r(2,i)}$) and write $\pi_{1,i}=\Speh(\delta_{1,i},d(1,i))$ and $\pi_{2,i}=\Speh(\delta_{2,i},d(2,i))$. Then $P_{\res}$ is standard with standard Levi factor
\begin{equation}
\label{eq:M_Q_L_Levi}
    M_{\res}:=M_{P_{\res}}=\left( \prod_{i=1}^{m_1} \GL_{r(1,i)}^{d(1,i)-1} \prod_{j=1}^{m_2} \GL_{r(2,i)}^{d(2,i)-1} \times \GL_k \right) \times \left( \prod_{i=1}^{m_1} \GL_{r(1,i)}^{d(1,i)-1} \prod_{j=1}^{m_2} \GL_{r(2,i)}^{d(2,i)-1}  \times \GL_{k+1} \right),
\end{equation}
where $k=\sum r(1,i)$. We may further write $M_{\res}=\bfM_{\res}^2 \times \cM_{\res}$, where $\cM_{\res}=\GL_k \times \GL_{k+1}$. Recall that we have defined in \S\ref{subsubsec:residues_cL} two elements $w_1$ and $w_2$ (see also \eqref{eq:w_1_action_1} and \eqref{eq:w_1_action_2} where their action on $M_{P_\pi}$ was explicitly described). Then $w_2 w_{\pi,n}^*$ and $w_1 w_{\pi,n+1}^*$ belong to $W(P_{\pi};P_{\res})$. For simplicity, we set $w^*_n=w_2 w_{\pi,n}^*$ and $w^*_{n+1}=w_1 w_{\pi,n+1}^*$, but we warn the reader that they do not belong to $\GL_n$ or $\GL_{n+1}$. We denote by $Q_{\pi}$ the standard parabolic subgroup $w^*_n. P_\pi=w^*_{n+1}. P_\pi$. We have $Q_\pi \subset P_{\res}$ and we set $\cQ_\pi=Q_\pi \cap \cM_{\res}$. We also write $Q_\delta$ for $w^*_n. P_\delta=w^*_{n+1}. P_\delta$, and we set $\cQ_{\delta}=Q_\delta \cap \cM_{\res}$.  

Let $Q$ be a standard Rankin--Selberg parabolic subgroup $G$ such that $Q \subset P_{\res}$. We set $\bfQ^2=Q \cap \bfM_{\res}^2$ and $\cQ=Q \cap \cM_{\res}$. Note that the latter is a standard Rankin--Selberg parabolic subgroup of $\cM_{\res}$. We consider the two subsets of $W$ 
\begin{equation*}
    W_{\pi,n}(Q)=\{ w' \in {}_{\cQ} W_{\cQ_\pi} \; | \; \cQ_{\delta} \subset \cQ_{\pi,w'} \} w^*_n, \quad
    W_{\pi,n+1}(Q)=\{ w' \in {}_{\cQ} W_{\cQ_\pi} \; | \; \cQ_{\delta} \subset \cQ_{\pi,w'} \} w^*_{n+1}.
\end{equation*}
By Lemma~\ref{lem:W_CH} we see that they are both subsets of $\{ w \in {}_Q W_{P_\pi} \; | \; P_\delta \subset P_{\pi,w} \}$. Finally, if $\lambda \in \fa_{P_\pi,\cc}^*$ or $\fa_{P_\delta,\cc}^*$ we write $(w_m^* \lambda)_{\cQ}$ for its restriction to $\fa_{\cQ_\pi,\cc}^*$ or $\fa_{\cQ_\delta,\cc}^*$. In particular, $(-w_m^* \nu_\delta)_{\cQ} \in \fa_{\cQ_\delta}^{\cQ_{\pi},*,+}$. More precisely, if we decompose $w_m^* \sigma$ as $(w_m^* \sigma)_{\bfQ_\delta} \boxtimes (w_m^* \sigma)_{\cQ_\delta}$, then $(w_m^* \nu_\delta)_{\cQ}=\nu_{(w_m^* \delta)_\cQ}$. We will also write $\lambda=(\lambda_n,\lambda_{n+1})$, and use the notation of \eqref{eq:lambda_coordinates} for the coordinates of $\lambda$.

\subsubsection{Restriction of meromorphic functions}

In this subsection, we compute the restriction of the different terms appearing in $Z_\delta^\natural$. We fix a standard Rankin--Selberg parabolic subgroup $Q$ of $G$ and a $w \in {}_Q W_{P_\pi} $ such that $ P_\delta \subset P_{\pi,w}$. We take $m \in \{n,n+1\}$. We set for $\lambda \in \fa_{P_\pi,\cc}^*$ in general position
\begin{equation}
    \label{eq:F^Q_defi}
    F^Q_{\sigma}(w,\lambda)=\frac{\prod_{\varpi^\vee \in \hat{\Delta}^\vee_{Q,H}} \zeta(\langle w (\lambda-\nu_\delta+\chi_{\sigma}) + \underline{\rho}_{Q},\varpi^\vee \rangle) }{L(\lambda+1/2,\delta_{n} \times \delta_{n+1})}.
\end{equation}

\begin{lem}
    \label{lem:holo_simplification}
    The restriction  $\underset{L_{m}^\uparrow}{\mathrm{Rest}}\;F^Q_{\sigma}(w,\lambda)$ is a well-defined meromorphic function on $\cH_+$. It is identically zero unless $Q \subset P_{\res}$ and $w \in W_{\pi,m}(Q)$. In that case, none of the singularity of $ \underset{L_{m}^\uparrow}{\mathrm{Rest}}\;F^Q_{\sigma}(w,\lambda)$ contain $\cH_+ \cap \cH_-$.
\end{lem}

\begin{proof}
    First, let us remark that the element $w$ belongs to $W(P_{\delta})$. If $\GL_r$ is a factor of $P_\pi$, we say that $w$ preserves the block $\GL_r$ if it preserves the product of factors of $M_{P_\delta}$ contained in $\GL_r$.
    
    We now prove the lemma. We deal with the $m=n$ case, the other one being treated with the same argument. We can assume that $d(1,1)>1$. The first affine hyperplane that we restrict to is $\Lambda_+(1,1,1)^{-1}(\{0\})$. Recall that this affine linear form is $-(\lambda(1,1)_{n,d(1,1)}+\lambda(1,1)_{n+1,1}+1/2)$. By direct computation, we see that it is singular (with multiplicity $1$) for the product $\prod_{\varpi^\vee \in \hat{\Delta}^\vee_{Q,H}} \zeta$ in the numerator of $F^Q_{\sigma}(w,\lambda)$ if and only if the three following conditions are satisfied: 
    \begin{itemize}
        \item $M_Q$ is contained in $(\GL_{r(1,1)} \times \GL_{n-r(1,1)}) \times (\GL_{r(1,1)} \times \GL_{n+1-r(1,1)})$;
        \item $w_n$ preserves the $d(1,1)^{\text{th}}$ $\GL_{r(1,1)}$ block of $M_{P_\pi,n}$, and sends it in first position;
        \item $w_{n+1}$ preserves the first $\GL_{r(1,1)}$ block of $M_{P_\pi,n+1}$, and leaves it in first position.
    \end{itemize}
    Note that in that case the pole comes from only one factor of the product, which corresponds to the first element in $\hat{\Delta}_{P_{\res},H}^\vee$, i.e. $e_1$ with the notation of \eqref{eq:coweight_defi}. On the other hand, using the formula \eqref{eq:cuspi_L} for $L$-functions of supercuspidal representations and \eqref{eq:L2L} for square integrable ones, we see that $L(\lambda+1/2,\delta_{n} \times \delta_{n+1})$ always has a simple zero passing through $\Lambda_+(1,1,1)^{-1}(\{0\})$, which comes from $L(-\Lambda_+(1,1,1)(\lambda),\delta_{1,1}\times \delta_{1,1}^\vee)$. Therefore, we conclude that the restriction of $F^Q_{\sigma}(w,\lambda)$ to $\Lambda_+(1,1,1)^{-1}(\{0\})$ is always well-defined, and is zero unless $Q$ and $w$ meet the three conditions described above. By repeating the procedure for each linear form, we see that $\underset{L_{m}^\uparrow}{\mathrm{Rest}}\;F^Q_{\sigma}(w,\lambda)$ is indeed well-defined and zero unless $Q \subset P_{\res}$ and $w \in W_{\pi,m}(Q)$ as claimed.

    We now fix $Q$ and $w$ and assume that $Q \subset P_{\res}$ and $w \in W_{\pi,m}(Q)$. Denote by $\Delta^*$ the subset of $\varpi^\vee \in \hat{\Delta}_{Q,H}^\vee$ that give rise to poles in the precedent construction. Then we see that for $\lambda \in \cH_+$ in general position 
    \begin{equation}
        \label{eq:explicit_L_factor}
        \prod_{\varpi^\vee \in \hat{\Delta}^\vee_{Q,H}\setminus \Delta^*} \zeta(\langle w (\lambda-\nu_\delta+\chi_{\sigma}) + \underline{\rho}_{Q},\varpi^\vee \rangle)=c \prod_{\varpi^\vee \in \hat{\Delta}^\vee_{\cQ,H}} \zeta\left(\langle w (\lambda-\nu_\delta+\chi_{\sigma}) + \underline{\rho}_{\cQ},\varpi^\vee \rangle\right),
    \end{equation}
    where $c$ is some constant. If we write $w=w' w_m^*$, the RHS only depends on $w' (w_m^* \lambda)_{\cQ}$. But the map $\lambda \in \fa_{\pi,\cc}^*-\nu_\pi \mapsto (w_m^* \lambda)_{\cQ} \in \fa_{\cQ_\pi,\cc}^*$ is surjective and we see by a variation of Lemma~\ref{lem:regular} that for all $\varpi^\vee \in \hat{\Delta}^\vee_{\cQ,H}$ the map $\mu \in \fa_{\cQ_\pi,\cc}^*\mapsto \langle w'\mu,\varpi^\vee \rangle$ is non-zero. Because $\underset{L_{m}^\uparrow}{\mathrm{Rest}}\;F^Q_{\sigma}(w,\lambda)$ is equal to \eqref{eq:explicit_L_factor} divided by some additional $L$ functions, none of its singularities contain $\cH_+ \cap \cH_-$.
\end{proof}

\begin{lem}
    \label{lem:holo_factor}
    Let $\phi \in I_P^G \delta$. Then no singularities of the meromorphic map 
    \begin{equation*}
        c_{\sigma}^Q(w,\lambda-\nu_\delta)N_{\sigma}(w,\lambda-\nu_\delta)\phi
    \end{equation*}
    contain $\cH_+$. Moreover, if $Q \subset P_{\res}$ and $w=w' w_m^* \in W_{\pi,m}(Q)$, we have a meromorphic function $G_\sigma^Q(w,\lambda)$ on $\fa_{P_\pi,\cc}^*$ such that
    \begin{equation*}
        c_{\sigma}^Q(w,\lambda-\nu_\delta)N_{\sigma}(w,\lambda-\nu_\delta)\phi=G_\sigma^Q(w,\lambda) N_{w_m^* \sigma}(w',w^*_m(\lambda-\nu_\delta))N_{\delta}(w^*_m,\lambda)\phi,
    \end{equation*}
    where no singularity of $G_\sigma^Q(w,\lambda)$ nor of the intertwining operators on the RHS contain $\cH_+ \cap \cH_-$.
\end{lem}

\begin{proof}
    Note that the maps $\lambda \in \cH_+ \mapsto \lambda_n \in \fa_{P_{\pi,n},\cc}^*$ and $\lambda \in \cH_+ \mapsto \lambda_{n+1} \in \fa_{P_{\pi,n+1},\cc}^*$ are surjective. Therefore, the first assertion follows from Lemma~\ref{lem:explicit_geom_lemma} and Proposition~\ref{prop:Whittaker_explicit_discrete}.
    
    Let now $Q$ and $w$ be as in the second part. By Theorem~\ref{thm:N}, because $w_m^* \in W(P_\pi)$ and $\phi \in I_{P_\pi}^G \delta$, we can write using \eqref{eq:N_n_norm}
    \begin{equation*}
        N_{\sigma}(w,\lambda-\nu_\delta)\phi=n_{\sigma}(w_m^*,\lambda-\nu_\delta)^{-1}n_{\delta}(w_m^*,\lambda)N_{w_m^* \sigma}(w',w^*_m(\lambda-\nu_\delta))N_{\delta}(w^*_m,\lambda)\phi.
    \end{equation*}
    This holds a least for $\lambda \in \cH_+$ in general position as above. We study each term.
    
    Assume that $m=n+1$. Decompose $N_{\delta}(w^*_m,\lambda)=N_{\delta}(w_1,w_{\pi,n+1}^*\lambda)N_{\delta}(w_{\pi,n+1}^*,\lambda)$. For $\lambda \in \fa_{\pi,\cc}^*-\nu_\pi$, $N_{\delta}(w^*_{\pi,n+1},\lambda)=N_{\delta}(w^*_{\pi,n+1},-\nu_\pi)$ is regular by Theorem~\ref{thm:N}. Moreover, by Lemma~\ref{lem:w_1_action} no singularity of $N_{\delta}(w^*_m,\lambda)$ contain $\cH_+ \cap \cH_-$, and the same applies to $n_{\delta}(w_m^*,\lambda)$. This arguments also works if $m=n$. 
    
    The regularity of $N_{\sigma}(w',w^*_m(\lambda-\nu_\delta))$ only depends on $(w_m^*\lambda)_\cQ$. Because $\cQ_{\delta} \subset \cQ_{\pi,w'}$ and $(-w_m^* \nu_\delta)_{\cQ} \in \fa_{\cQ_\delta}^{\cQ_{\pi},*,+}$, for any $\alpha \in \Sigma_{\cQ_\delta}$ such that $w' \alpha <0$ the map $\mu \in \fa_{\cQ_\pi,\cc}^* \mapsto \langle \mu-w_m^* \nu_\delta,\alpha^\vee \rangle$ is either non-zero, either equal to a positive integer. But $\lambda \in \cH_+ \cap \cH_- \mapsto (w_m^* \lambda)_{\cQ} \in \fa_{\cQ_\pi,\cc}^*$ is surjective so no singularity of the operator contain $\cH_+ \cap \cH_-$ by Theorem~\ref{thm:N}.

    It now remains to deal with $ c_{\sigma}^Q(w,\lambda-\nu_\delta)n_{\sigma}(w_m^*,\lambda-\nu_\delta)^{-1}$. Let $w_Q$ (resp $w_{P,\res}$) be the longest element in $W(Q)$ (resp. $W(P_{\res})$). Because $Q \subset P_{\res}$, we have $w_Q=w_{P,\res}w_Q^{P,\res}$ where $w_Q^{P,\res}$ is the longest element in $W(Q \cap M_{P_{\res}})$. As $w=w'w_m^*$ (resp. $w_Q=w_{P,\res}w_Q^{P,\res}$) is longer than $w_m^*$ (resp. $w_{P,\res}$), we see by going back to the definition of the factors $n(w,\lambda)$ and $\gamma(w,\lambda)$ in \eqref{eq:n_formula} and \eqref{eq:gamma_formula_prod}, and by using the inductive properties of $\gamma$-factors (Lemma~\ref{lem:inductive_gamma}), that for $\lambda \in \cH_+$ in general position we have
    \begin{equation*}
        c_{\sigma}^Q(w,\lambda-\nu_\delta)n_{\sigma}(w_m^*,\lambda-\nu_\delta)^{-1}=\gamma_{\delta}(w_m^*,\lambda) \gamma_{w_m^* \delta^\vee}(w_{P,\res},-w_m^* \lambda)^{-1} c_{w_m^* \sigma}^{Q,M_{\res}}(w',w_m^*(\lambda-\nu_\delta)),
    \end{equation*}
    where the coefficient $c_{w_m^* \sigma}^{Q,M_{\res}}(w',\cdot)$ is defined as in \eqref{eq:c_w^Q_formula} for the partial induced representation $I_{Q_\delta}^{M_{\res}} w_m^* \sigma$ with respect to the standard parabolic subgroup $\bfQ^2 \times \cQ$ of $M_{\res}$. 
    
    We claim that no singularity of $c_{w_m^* \sigma}^{Q,M_{\res}}(w',w_m^*(\lambda-\nu_\delta))$ contain $\cH_+ \cap \cH_-$. Indeed, because $M_{\res}=\bfM_{\res}^2 \times \cM_{\res}$, this term breaks into a product $c^{\bfQ^2} c^{\cQ}$. Using \eqref{eq:explicit_c}, we see that the first is actually constant and well-defined along $\cH_+ \cap \cH_-$. By Proposition~\ref{prop:Whittaker_explicit_discrete} and because $(w_m^* \nu_\delta)_{\cQ}=\nu_{(w_m^* \delta)_\cQ}$, we know that $c^\cQ(\mu-w_m^* \nu_{\delta})$ is well-defined for $\mu \in \fa_{\cQ_\pi,\cc}^*$ in general position. But $\lambda \in \cH_+ \cap \cH_- \mapsto (w_m^* \lambda)_{\cQ} \in \fa_{\cQ_\pi}^*$  is surjective as seen above, which concludes.

    We finally deal with $\gamma_{\delta}(w_m^*,\lambda) \gamma_{w_m^* \delta^\vee}(w_{P,\res},-w_m^* \lambda)^{-1}$. By \eqref{eq:gamma_formula_prod}, we have the explicit description
      \begin{equation*}
        \gamma_{\delta}(w_m^*,\lambda) \gamma_{w_m^* \delta^\vee}(w_{P,\res},-w_m^* \lambda)^{-1}=\prod_{w_m^* \alpha <0} \frac{L(1-\langle \lambda, \alpha^\vee \rangle,\delta^\vee_\alpha)}{L(\langle \lambda,\alpha^\vee\rangle,\delta_\alpha)}\prod_{w_{P,\res} \alpha <0} \frac{L(\langle -w_m^*\lambda,\alpha^\vee\rangle,(w_m^*\delta)_\alpha)}{L(1+\langle w_m^* \lambda, \alpha^\vee \rangle,(w_m^*\delta)^\vee_\alpha)}.
    \end{equation*}
    This can further be decomposed as a product over roots of $\GL_n$ and $\GL_{n+1}$. We deal with the $\GL_n$ case. Because $\lambda \in \cH_+ \cap \cH_- \mapsto \lambda_n \in \fa_{P_n,\cc}^*-\nu_{\pi,n}$ is surjective, it suffices to show that no singularities contain $\fa_{P_n,\cc}^*-\nu_{\pi,n}$. We first assume that $m=n$, so that $w_n^*=w_2 w_{\pi,n}^*$. We claim that the $L(\langle -w_n^*\lambda,\alpha^\vee\rangle,(w_n^*\delta)_\alpha)$ terms cause no issues. Indeed, for $\lambda \in \fa_{P_n,\cc}^*-\nu_{\pi,n}$ we have $-w_n^*\lambda=-w_2 \lambda + w_2 \nu_{\pi,n}$. But if $w_{P,\res}\alpha<0$ with $\lambda \mapsto \langle -w_2 \lambda, \alpha^\vee \rangle$ constant, then we must have $\alpha \in \Sigma_{P_{\res}}^{P_+}$ (see \S\ref{subsubsec:residue_free}), and therefore $\langle w_2 \nu_{\pi,n}, \alpha^\vee \rangle<0$. Moreover, let $\alpha$ such that $w_n^* \alpha<0$ and $\lambda \in \fa_{P_n,\cc}^*-\nu_{\pi,n} \mapsto \langle \lambda,\alpha^\vee \rangle$ is constant. Then $\beta \in \Sigma_{P_\delta}^P$. If we set, $\beta=-w_n^* \alpha$ then one can check that $\beta>0$ and $w_{P,\res}\beta <0$. It follows that this pole is compensated by a factor $L(1+\langle w_n^* \lambda, \alpha^\vee \rangle,(w_n^*\delta)^\vee_\alpha)$. The argument for $m=n+1$ is the same.

    We conclude that $G_\sigma^Q(w,\lambda)= c_{\sigma}^Q(w,\lambda-\nu_\delta)n_{\sigma}(w_m^*,\lambda-\nu_\delta)^{-1}n_{\delta}(w_m^*,\lambda)$ works.
\end{proof}

\subsubsection{The formula for the normalized Zeta integral}

We can now write our formula for the restriction of $Z_\delta^\sharp$ to $\fa_{\pi,\cc}^*-\nu_\pi$.

\begin{prop}
    \label{prop:Z^sharp_unfold_local}
    Let $\phi \in I_{P_\pi}^G \delta$. For $m \in \{n,n+1\}$ and $\lambda \in \fa_{\pi,\cc}^*-\nu_\pi$ in general position we have 
    \begin{align}
        Z_{\delta}^\natural&(\phi,\lambda)=\sum_{Q \subset P_{\res}} \sum_{w \in W_{\pi,m}(Q)}  \underset{L_{m}^\uparrow}{\mathrm{Rest}}\;F^Q_{\sigma}(w,\lambda) \times G_\sigma^Q(w,\lambda) \nonumber \\
        &\times \sum_{i=1}^{i_Q} \delta_{P_{0,H}}^{-1}(b_{i,Q})W_{w.P_\delta,w\sigma}^{M_Q,\psi_0}\left(b_{i,Q},R(e_{K_H}) N_{w_m^* \sigma}(w',w^*_m(\lambda-\nu_\delta))N_{\delta}(w^*_m,\lambda)\phi,w(\lambda-\nu_\delta) \right), \label{eq:Z^sharp_unfold}
    \end{align}
    where we write $w=w' w^*_m$ and $G_\sigma^Q(w,\lambda)$.
\end{prop}

\begin{proof}
    By Lemmas \ref{lem:holo_simplification} and \ref{lem:holo_factor} this holds if we restrict to $\cH_+$, and by Lemma~\ref{lem:holo_factor} again we may further restrict to $\fa_{\pi,\cc}^*-\nu_\pi$.
\end{proof}

\begin{rem}
    The functions $\underset{L_{m}^\uparrow}{\mathrm{Rest}}\;F^Q_{\sigma}(w,\lambda)$ and $G_\sigma^Q(w,\lambda)$ can be explicitly computed. This can be used to show that $RZ_\delta^\sharp(\phi,\lambda)$ is a local factor of the global regularized period $\cP^{P_+}(\varphi,\lambda,w_+)$ from Proposition~\ref{prop:alternative_construction}. For more details, we refer the reader to \cite[Proposition~13.6.0.1]{BoiPhD}.
\end{rem}

\subsection{The split local non-tempered Ichino--Ikeda conjecture - second proof}

The upshot of Proposition~\ref{prop:Z^sharp_unfold_local} is that we obtain the factorization property of $Z^\natural_\delta$.

\begin{cor}
    \label{cor:generic_facto}
    For $\lambda \in \fa_{\pi,\cc}^*-\nu_\pi$ in general position, the linear form $\phi \mapsto Z_{\delta}^\natural(\phi,\lambda)$ factors through the quotient $I_{P_\pi}^{G} \delta_{\lambda} \to I_{P}^G \pi_{\mu}$, where $\mu=\lambda+\nu_\pi$.
\end{cor}

\begin{proof}
    By \cite[Section~I.11]{MW89}, the quotient is realized by $N_{\delta}(w_\pi^*,\lambda)$. The corollary is now a direct consequence of Proposition~\ref{prop:Z^sharp_unfold_local} once we know that $N_{\delta}(w_2,w_{\pi,n}^* \lambda)$ and $N_{\delta}(w_1,w_{\pi,n+1}^*\lambda)$ are regular for $\lambda$ in general position, which follows from the proof of Lemma~\ref{lem:holo_factor}.
\end{proof}

By Proposition~\ref{prop:facto_is_enough}, we know that this property implies the split non-tempered Ichino--Ikeda conjecture from Theorem~\ref{thm:local_GGP_explicit}. This therefore gives a local alternative proof of this result.

\printbibliography

\begin{flushleft}
Paul Boisseau \\
Max Planck Institute for Mathematics, \\
Vivatsgasse 7, \\
53111 Bonn, Germany
\medskip
	
email:\\
boisseau@mpim-bonn.mpg.de \\
\end{flushleft}

\end{document}